\theoremstyle{plain} 
\newtheorem{theor}[equation]{Theorem}
\newtheorem{cor}[equation]{Corollary}
\newtheorem{lem}[equation]{Lemma}
\newtheorem{conjecture}[equation]{Conjecture}
\newtheorem{proposition}[equation]{Proposition}
\theoremstyle{definition}
\newtheorem{defin}[equation]{Definition}
\theoremstyle{remark}
\newtheorem{rem}[equation]{Remark}
\newtheorem{ex}[equation]{Example}
\def\build#1_#2^#3{\mathrel{\mathop{\kern0pt#1}\limits_{#2}^{#3}}}
\begin{document}
\begin{abstract}
Let $M$ be a connected, closed  oriented manifold.
Let $\omega\in H^m(M)$ be its orientation class.
Let $\chi(M)$ be its Euler characteristic.
Consider the free loop fibration
$\Omega M\buildrel{i}\over\hookrightarrow LM\buildrel{ev}\over\twoheadrightarrow M$.
For any class $a\in H^*(LM)$ of positive degree, we prove that the cup product
$\chi(M)a\cup ev^*(\omega)$ is null.
In particular, if $i^*:H^*(LM;\mathbb{F}_p)\twoheadrightarrow
H^*(\Omega M;\mathbb{F}_p)$ is onto then
$\chi(M)$ is divisible by $p$ (or $M$ is a point).
\end{abstract}
\title{\bf String Topology, Euler Class and TNCZ free loop fibrations.}

\author{Luc Menichi}
\address{Larema, UMR CNRS 6093\\
Universit\'e d'Angers\\
2 Bd Lavoisier\\49045 Angers, FRANCE}
\email{firstname.lastname at univ-angers.fr}
\subjclass{ 55P50, 55R40}
\keywords{String Topology, free loop space, Euler Class}
\maketitle
\section{Introduction}
Denote by $LM:=map(S^1,M)$ the free loop space on $M$.
Except where specified, we work over an arbitrary principal ideal
domain ${\Bbbk}$.

In String Topology, shriek maps are used to defined operations.
But usually shriek maps are used to obtained vanishing results: see for example~\cite[III.10.1]{Brown:cohgro}
for an application of the transfer map in group cohomology.
In this paper, after defining them carefully, we use the operations in String Topology to obtain the following vanishing result:
\begin{theor}(Theorem~\ref{formule loop coproduct} 3) and Remark~\ref{noyau section egale cohomologie relative lacets libres lacets constants} below)
Let $M$ be a connected, closed  oriented manifold.
Let $\omega\in H^m(M)$ be its orientation class.
Let $\chi(M)$ be its Euler characteristic.
The cohomology of the free loops relative to the constant loops $H^*(LM,M)$ satisfies
$$
H^*(LM,M)\cup   \chi(M) ev^*(\omega)=\{0\}.
$$
\end{theor}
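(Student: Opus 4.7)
The strategy is to exploit the string-topology shriek/umkehr map $s^{!}$ associated to the section $s\colon M \hookrightarrow LM$ of constant loops in the evaluation fibration. Heuristically, $\chi(M)\, ev^{*}(\omega)$ plays the role of the Euler-class obstruction of the section $s$ --- a self-intersection class of $s(M)$ inside $LM$. Since such an Euler-class obstruction is supported arbitrarily close to $s(M)$, its cup product with any class in $H^{*}(LM, M)$, represented by cocycles vanishing on $s(M)$, should vanish.

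To carry this out, I would first invoke the construction of the shriek map $s^{!}\colon H^{*}(LM) \to H^{*-m}(M)$, built via a Thom--Pontryagin collapse after embedding $M$ in a Euclidean space and applying Atiyah duality for the closed oriented manifold $M$; this is the content of Theorem~\ref{formule loop coproduct} parts 1 and 2. Then I would combine the projection formula
\[
s^{!}\bigl(\alpha \cup ev^{*}(\beta)\bigr) \;=\; s^{!}(\alpha) \cup \beta, \qquad \alpha \in H^{*}(LM),\ \beta \in H^{*}(M),
\]
with the Euler-class identity
\[
s^{!}\bigl(ev^{*}(\omega)\bigr) \;=\; \chi(M) \in H^{0}(M),
\]
which encodes that the ``virtual normal bundle'' of $s(M)$ in $LM$ has Euler class $\chi(M)\,\omega$. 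Packaged as in Theorem~\ref{formule loop coproduct} part~3, these two identities yield an explicit expression for $\chi(M)\,\alpha \cup ev^{*}(\omega)$ in terms of $s^{!}(\alpha)$, $\omega$, and the section $s$. Remark~\ref{noyau section egale cohomologie relative lacets libres lacets constants} then identifies $H^{*}(LM, M)$ with $\ker s^{*}$, and on this kernel the resulting expression collapses: the contribution from $ev^{*}\circ s^{*}$ dies because $s^{*}(\alpha) = 0$, while the remaining ``Euler-class piece'' vanishes for degree reasons once multiplied by $\omega\in H^{m}(M)$.

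The technical heart of the paper --- and the principal obstacle --- is the rigorous construction of $s^{!}$ and the verification of the projection formula and the Euler-class identity in the infinite-dimensional setting of $LM$, where neither Poincar\'e duality nor tubular neighbourhoods apply directly. Once those properties are established, the vanishing $H^{*}(LM, M) \cup \chi(M)\, ev^{*}(\omega) = \{0\}$ is a short formal consequence of the Euler-class identity, the projection formula, and the identification of relative cohomology with the kernel of the section.
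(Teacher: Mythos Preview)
Your proposal misidentifies the shriek map that drives the argument. The paper never constructs or uses a map $s^{!}\colon H^{*}(LM)\to H^{*-m}(M)$ for the constant-loop section $s=\sigma\colon M\hookrightarrow LM$. In fact the two properties you postulate for such a map are mutually inconsistent: setting $\alpha=1$ in your projection formula gives $s^{!}(ev^{*}\beta)=s^{!}(1)\cup\beta$, and since $s^{!}(1)\in H^{-m}(M)=0$ for $m>0$ this forces $s^{!}\circ ev^{*}=0$, contradicting your Euler-class identity $s^{!}(ev^{*}\omega)=\chi(M)$ whenever $\chi(M)\neq 0$. There is also no natural way to realise $\sigma$ as the pull-back of a finite-codimension embedding in the sense of the paper, so the Thom--Pontryagin machinery of Part~1 does not apply to it.

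The shriek map actually used is $\mu^{!}\colon H^{*}(LM\times_M LM)\to H^{*+m}(LM)$, where $\mu$ is loop composition; this map exists because $\mu$ \emph{is} the pull-back of the diagonal embedding $\Delta\colon M\hookrightarrow M\times M$ along the fibration $(ev,ev_{1/2})\colon LM\twoheadrightarrow M\times M$. Proposition~\ref{shriek et Euler class} gives $\mu^{*}\mu^{!}(x)=x\cup ev_{1/2}^{*}(e_{TM})$ with $e_{TM}=\chi(M)\omega$. The crucial point you are missing is that $\mu$ admits \emph{two} homotopy sections $i_1,i_2\colon LM\to LM\times_M LM$ (insert a constant loop on the right, respectively on the left). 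Applying $i_1^{*}$ and $i_2^{*}$ to the identity above yields two expressions for the loop coproduct $\mu^{!}\tilde{\Delta}^{*}(a\otimes 1)$: one equals $a\cup\chi(M)ev^{*}(\omega)$, the other equals $ev^{*}\sigma^{*}(a)\cup\chi(M)ev^{*}(\omega)$. Subtracting gives $\bigl(a-ev^{*}\sigma^{*}(a)\bigr)\cup\chi(M)ev^{*}(\omega)=0$, and since $a\mapsto a-ev^{*}\sigma^{*}(a)$ is a retraction onto $\ker\sigma^{*}\cong H^{*}(LM,M)$, the vanishing follows. Parts 1) and 2) of Theorem~\ref{formule loop coproduct} are these two formulas for the coproduct, not a construction of $s^{!}$.
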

This vanishing result also holds for any generalized cohomology $h^*$ and homotopy fibre product of (the pull-back) of
an embedding with itself (Theorem~\ref{formule open string coproduct} 4) below).

Using Leray-Hirsch theorem or Serre spectral sequence, we deduce
\begin{cor}(Corollary~\ref{tncz implique euler nulle} below)
Let $M$ be a connected, closed  oriented manifold.
Suppose that the free loop fibration
$\Omega M\buildrel{i}\over\hookrightarrow LM\buildrel{ev}\over\twoheadrightarrow M$ is Totally Non-Cohomologous to Zero
with respect to a field $\mathbb{F}$, i. e. $H^*(i;\mathbb{F}):H^*(LM;\mathbb{F})\twoheadrightarrow
H^*(\Omega M;\mathbb{F})$  is onto.
Then $\chi(M)=0$ in $\mathbb{F}$ or $M$ is a point.
\end{cor}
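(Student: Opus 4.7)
The plan is to reduce the corollary to the Theorem by combining it with the Leray--Hirsch decomposition furnished by the TNCZ hypothesis.

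First I set up the framework. The constant-loop map $s:M\hookrightarrow LM$ is a section of $ev$, so $s^*\circ ev^*=\mathrm{id}$; the long exact sequence of the pair $(LM,M)$ therefore splits, identifying $H^*(LM,M;\mathbb{F})$ with $\ker s^*\subset H^*(LM;\mathbb{F})$. Now choose a homogeneous $\mathbb{F}$-basis $\{e_\alpha\}$ of $H^*(\Omega M;\mathbb{F})$ with $e_0=1$. By TNCZ each $e_\alpha$ admits a lift $\tilde e_\alpha\in H^*(LM;\mathbb{F})$ satisfying $i^*\tilde e_\alpha=e_\alpha$, and Leray--Hirsch promotes this to an $H^*(M;\mathbb{F})$-linear isomorphism $\Phi:H^*(M;\mathbb{F})\otimes H^*(\Omega M;\mathbb{F})\xrightarrow{\cong}H^*(LM;\mathbb{F})$, $m\otimes e_\alpha\mapsto ev^*(m)\cdot\tilde e_\alpha$.

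Second, I arrange the lifts to land in $\ker s^*$. Since $ev\circ i$ is constant, $i^*\circ ev^*$ vanishes on positive-degree classes; so replacing $\tilde e_\alpha$ by $\tilde e_\alpha-ev^*(s^*\tilde e_\alpha)$ places it in $\ker s^*$ and, in positive degree, preserves $i^*\tilde e_\alpha=e_\alpha$. In degree $0$ the same correction instead produces an upper-triangular change of basis of $H^*(\Omega M;\mathbb{F})$, which is still a Leray--Hirsch basis. Hence I may assume $\tilde e_\alpha\in H^*(LM,M;\mathbb{F})$ for all $\alpha\neq 0$, with $\tilde e_0=1$. For each such $\alpha$ the Theorem gives $\chi(M)\,\tilde e_\alpha\cup ev^*(\omega)=0$ in $H^*(LM;\mathbb{F})$; under $\Phi$ this product corresponds to $\chi(M)\,(\omega\otimes e_\alpha)$. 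Since $\omega\ne 0$ (closed orientability with $\dim M\geq 1$) and $e_\alpha\ne 0$, this forces $\chi(M)=0$ in $\mathbb{F}$.

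Finally, the argument requires a basis element $e_\alpha\ne 1$. If none exists then $H^*(\Omega M;\mathbb{F})=\mathbb{F}$; in particular $H^0(\Omega M;\mathbb{F})=\mathbb{F}$ forces $\pi_1(M)=0$, and the Serre spectral sequence of the path--loop fibration $\Omega M\to PM\to M$ (with simple coefficients, collapsed at $E_2$) yields $H^*(M;\mathbb{F})=\mathbb{F}$; Poincar\'e duality then gives $\dim M=0$, so $M$ is a point. The main technical subtlety is the degree-zero bookkeeping in the basis adjustment, since $i^*\circ ev^*$ does not vanish there; apart from that the corollary is a formal consequence of the Theorem and Leray--Hirsch.
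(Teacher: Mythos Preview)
Your proof is correct and follows essentially the same route as the paper. The paper packages the Leray--Hirsch step into a general lemma about TNCZ fibrations with section (Lemma~\ref{tncz implique classe d'euler nulle cohomologie generalisee}): given $\ker\sigma^*\cup p^*(e)=0$, one chooses lifts $c_j\in\ker\sigma^*$ of a basis of $\tilde h^*(F)$, observes that $\{1,c_j\}$ is an $h^*(B)$-basis of $h^*(E)$ by Leray--Hirsch, and reads off $e=0$ from $e\cdot c_j=0$ in this free module; you carry out the identical argument inline for the free loop fibration with $e=\chi(M)\omega$. Your explicit degree-zero bookkeeping (the upper-triangular basis change) corresponds to the paper's use of reduced cohomology $\tilde h^*(F)$ together with the splitting $\tilde h^*(E)=\ker\tilde\sigma^*\oplus\text{Im }\tilde p^*$, which achieves the same adjustment more cleanly without singling out degree zero.
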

Again this Corollary is generalized for any generalized cohomology $h^*$ and any fibration with section
(Lemmas~\ref{tncz implique classe d'euler nulle cohomologie generalisee} and~\ref{tncz implique classe d'euler nulle cohomologie singuliere} below).
We deduce then the following theorem.
\begin{theor} (Theorem~\ref{pull-back tncz implique cohomologie fibre ou classe d'euler nulle})
Let $g:G\hookrightarrow E$ be the pull-back of an embedding in the sense of definition~\ref{pull-back fibre ou transverse d'un embedding}.
Under some mild hypothesis, if the fibration $p_g$ associated to $g$ is Totally Non-Cohomologous to Zero and if all the homotopy fibres $p_g^{-1}(*)$ are not acyclic then the Euler class of $g$
is null.
\end{theor}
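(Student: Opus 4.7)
\emph{Proof plan.} The strategy is to fuse the two tools stated earlier in the excerpt: the open-string vanishing identity of Theorem~\ref{formule open string coproduct} 4), and the generic TNCZ--implies--null--Euler--class mechanism of Lemmas~\ref{tncz implique classe d'euler nulle cohomologie generalisee} and~\ref{tncz implique classe d'euler nulle cohomologie singuliere}. The proof will be a formal combination of these, once the Euler class $e(g)$ is correctly identified with the class that appears in the vanishing identity.

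First, I would unpack what Definition~\ref{pull-back fibre ou transverse d'un embedding} attaches to $g:G\hookrightarrow E$. Being a pull-back of an embedding, $g$ sits at the top of a square which exhibits it as the section of a fibration $p_g:E\twoheadrightarrow G$, whose fibre is the homotopy fibre product used to define the pull-back. The Euler class $e(g)\in h^*(G)$ is the self-intersection class of $g$, generalizing the class $\chi(M)\omega=e(TM)$ attached to the diagonal embedding in the free-loop case. Applying Theorem~\ref{formule open string coproduct} 4) to the pair $(E,G)$ then produces, for any generalized cohomology $h^*$, the vanishing identity
\[
h^*(E,G)\cup p_g^*\bigl(e(g)\bigr)=\{0\},
\]
which is the open-string analog of $H^*(LM,M)\cup\chi(M)ev^*(\omega)=\{0\}$.

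Second, I would feed this identity into the Leray--Hirsch argument encoded in Lemmas~\ref{tncz implique classe d'euler nulle cohomologie generalisee} and~\ref{tncz implique classe d'euler nulle cohomologie singuliere}. TNCZ of $p_g$ with fibre $F$ makes $h^*(E)$ a free $h^*(G)$-module on any set of lifts of an $h^*$-basis of $F$, and the relative group $h^*(E,G)$ is identified with the free submodule on lifts of the positive-degree classes of $F$. Non-acyclicity of $F$ guarantees the existence of a non-zero lift $\tilde x\in h^*(E,G)$. Substituting $\tilde x$ into the vanishing identity yields $\tilde x\cdot p_g^*(e(g))=0$, and the freeness of the $h^*(G)$-module structure forces $p_g^*(e(g))=0$. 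Precomposing with the section produces $e(g)=g^*p_g^*(e(g))=0$, as claimed.

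\textbf{Main obstacle.} The real work hides behind the phrase \emph{mild hypothesis}. One must ensure that $h^*(F)$ is sufficiently well behaved (typically a finite-type freeness condition over the coefficient ring of $h^*$) for the generalized Leray--Hirsch theorem to split $p_g$; and one must check that the Euler class entering Theorem~\ref{formule open string coproduct} 4) coincides with $e(g)$ under the identification of the section of $p_g$ with $g$. Once these two bookkeeping points are settled, the deduction is immediate from the three ingredients above.
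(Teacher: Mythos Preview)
Your overall strategy is the right one and matches the paper: combine the vanishing identity of Theorem~\ref{formule open string coproduct}~4) with the Leray--Hirsch/Serre argument of Lemmas~\ref{tncz implique classe d'euler nulle cohomologie generalisee} and~\ref{tncz implique classe d'euler nulle cohomologie singuliere}. However, you have misidentified the fibration to which these two ingredients are applied, and this is a genuine gap, not just bookkeeping.

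The fibration $p_g$ ``associated to $g$'' is the mapping-path replacement $p_g:E^I\times_g G\twoheadrightarrow E$, $(\omega,b)\mapsto\omega(0)$; it does \emph{not} go from $E$ to $G$, and $g$ is \emph{not} a section of it. Consequently there is no pair $(E,G)$ in the sense you invoke, the formula $h^*(E,G)\cup p_g^*(e(g))=\{0\}$ is not well typed ($p_g^*$ has source $h^*(E)$, not $h^*(G)$), and the final step $e(g)=g^*p_g^*(e(g))$ does not parse. The fibration that \emph{does} have a section and to which Theorem~\ref{formule open string coproduct}~4) literally applies is
\[
ev_0:\leftidx{^g}{E}{^g}\twoheadrightarrow G,\qquad \sigma:G\hookrightarrow \leftidx{^g}{E}{^g},
\]
the self homotopy fibre product of $g$ with itself; the vanishing identity reads $\text{Ker}\,\sigma^*\cup ev_0^*\!\bigl(q^*(e_\nu)\bigr)=\{0\}$ with $e_\nu\in h^m(M)$ the Euler class of the normal bundle of $\phi$ and $q:G\to M$ the structure map. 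It is to \emph{this} fibration that Lemma~\ref{tncz implique classe d'euler nulle non connexe} is applied.

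Because you conflated $p_g$ with $ev_0$, you also skipped a step that the paper carries out explicitly: one must show that TNCZ of $p_g$ (over $E$) implies TNCZ of $ev_0$ (over $G$). This follows from the pull-back square identifying $ev_0$ as the pull-back of $p_g$ along $g$, together with the observation that the fibres of $p_g$ and of $p_\phi$ are homotopy equivalent. Once this transfer of TNCZ is in place, your Leray--Hirsch paragraph goes through verbatim with $ev_0$ in place of your ``$p_g$'' and yields $q^*(e_\nu)=0$ (componentwise on $G$), which is the paper's conclusion.
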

In the case of the diagonal embedding, Theorem 3 gives Corollary 2.

We now give the plan of the paper.

Part 1. We construct carefully the shriek maps used in String Topology to define the operations.
In particular, we give the key property (Proposition~\ref{shriek et Euler class})
that we use in this paper.

Part 2. We give our most general results. In section 5, we define the open string product and
open string coproduct of Sullivan~\cite{Sullivan:openclosedstring}. In section 6,
we compute the open string coproduct of the homotopy fibre product of an embedding with itself.
In section 7, we give general results on Totally Non-Cohomologous to Zero fibrations with sections
using Leray-Hirsch (Lemma~\ref{tncz implique classe d'euler nulle cohomologie generalisee}) or Serre spectral sequence (Lemma~\ref{tncz implique classe d'euler nulle cohomologie singuliere}).
These general results are used to prove Theorem 3.
In section 8, as an example, we consider the case when the embedding is the inclusion of
complex projective spaces.

Part 3. We specialize to the case of free loop spaces where the embedding is the diagonal embedding.
In section 9, we define the Chas-Sullivan loop product~\cite{Chas-Sullivan:stringtop} and the loop coproduct.
In section 10, we compute the dual of the loop coproduct in term of cup product. In particular,
we recover the results of Tamanoi~\cite{tamanoi-2007} and Sullivan~\cite{Sullivan:openclosedstring}
concerning the vanishing of the loop coproduct.
In section 11, we give (Theorem~\ref{Applications}) a variant of Theorem 1.
As application, we prove an homotopy version of a classical result relating fixed point action of
the circle and Euler characteristics. And we prove Corollary 2.
In section 12, we give many examples showing that Corollary 2 is pertinent.
We conjecture that Corollary 2 holds for any simply-connected finite CW-complex.
In section 13, we consider the case the case of relative free loop spaces. This is an example
where the embedding is the pull-back of the diagonal embedding.
As an application, we generalize Theorem 1 to the space $\text{map}(\vee_n S^1,M)$ of maps
from the wedge of $n$ circles to $M$.

{\em Acknowledgment:}
We wish to thank our expert in generalized cohomology, Geoffrey powell.
We would like also to thank the universities of Angers and Nantes for giving us the opportunity
to teach a Master course on characteristics classes following~\cite{Milnor-Stasheff}.

\part{The shriek maps}
\section{The shriek map of an oriented embedding}\label{shriek d'un embedding}
Let $h^*$ be a generalized cohomology theory which is additive and multiplicative.
 
Let $\phi:M\hookrightarrow B$ an embedding between two manifolds
without boundary of dimensions $m$
and $b$ respectively.
Following ~\cite[Corollary 11.2]{Milnor-Stasheff}, we suppose that $\phi(M)$ is a closed subset of $B$,
i. e.~\cite[Proposition A.53 (c), Theorem A.57]{Lee:Introsmoothmani} $\Phi$ is proper. Of course, this is the case if $M$ is compact.

We also suppose that the normal bundle $\nu$ is $h^*$-oriented.
If $M$ and $B$ are both $h^*$-oriented, $\nu$ is $h^*$-oriented
since $TM$ and $TB_{|M}=TM\oplus \nu$ are $h^*$-oriented (\cite[Chapter 4, Lemma 4.1]{Hirsch} or~\cite[Theorem 6 p. 45]{dyer:cohtheo}).

By the tubular neighborhood theorem (~\cite[11.1]{Milnor-Stasheff} or~\cite[Chapter 4, Theorem 5.2]{Hirsch}),
there exists an open neighborhood $V$ of $M$ in $B$ and a diffeomorphism
$exp:\nu\buildrel{\cong}\over\rightarrow V$ such that under this diffeomorphism, the zero
section map $M\rightarrow \nu$ corresponds to the inclusion map $s:M\hookrightarrow V$.

Consider the associated closed disk bundle $D(\nu)$ and the associated sphere bundle $S(\nu)$.
Let $N:=exp(D(\nu))$ be a closed tubular neighborhood. Let $\partial N:=exp(S(\nu))$ be its boundary.
Note that the inclusion map $s:M\buildrel{\approx}\over\hookrightarrow N$
is a homotopy equivalence.

Since $\nu$ is $h^*$-oriented, there exists a Thom class $u\in h^{b-m}(D(\nu);S(\nu))$ and a Thom
isomorphism~\cite[Theorem 9.1]{Milnor-Stasheff}.
\begin{rem}\label{Thom pas toujours iso}
Our generalized cohomology $h^*$ does not necessarily
satisfies the weak equivalence axiom. Therefore the Thom homomorphism might not be an isomorphism~\cite[(17.9.1)]{Dieck:algtop}.
\end{rem}
The Thom class $u$
will be thought as an element of  $h^{b-m}(N;\partial N)$.

Since $M$ is closed in $B$, $(B,B-M,N)$ is an excisive triad.
The inclusion $\partial N\buildrel{\approx}\over\hookrightarrow N-M$ is a homotopy equivalence.
Therefore the composite
$$
i:(N,\partial N)\rightarrow (N,N-M)\rightarrow (B,B-M)
$$
induces an isomorphism in cohomology.
Let $j:B\rightarrow (B,B-M)$ be the canonical map.
By definition~\cite[p. 419]{Felix-Thomas:stringtopGorenstein}, 
$\phi^!$ is the composite

\xymatrix@1{
h^*(M)\ar[r]^{s^{*-1}}_\cong
&h^*(N)\ar[r]^{-\cup u}_\cong
&h^{*+b-m}(N,\partial N)\ar[r]^{i^{*-1}}_\cong
&h^{*+b-m}(B,B-M)\ar[r]^{j^*}
&h^{*+b-m}(B)
}
\section{The shriek map of the pull-back of an embedding}\label{shriek d'un pull-back d'un embedding}
The idea to construct the shriek map of the pull-back of an embedding, is to pull-back the Thom class and
the tubular neighborhood. In particular, we will forget the original vector bundle $\nu$
and the fact the Thom homomorphism was (may-be see Remark~\ref{Thom pas toujours iso}) a Thom isormorphism.

Consider a (Serre) fibration $p:E\twoheadrightarrow B$.
Consider the pull-back diagram
$$\xymatrix{
\tilde{M}\ar[r]^{\tilde{\phi}}\ar[d]_{q}
&E\ar[d]^p\\
M\ar[r]_\phi
&B
}
$$
The goal of this section is to construct a shriek map for $\tilde{\phi}$.

Let $\tilde{N}:=p^{-1}(N)$. Then $p^{-1}(N-M)=p^{-1}(N)-p^{-1}(M)=\tilde{N}-\tilde{M}$.
Let $\tilde{\partial N}:=p^{-1}(\partial N)$.
Consider the two rectangles where all the squares are pull-backs

$$
\xymatrix{
\tilde{M}\ar[r]_{\tilde{s}}^\simeq\ar[d]_{q}
&\tilde{N}\ar[r]\ar[d]_p
&E\ar[d]^p\\
M\ar[r]_s^\approx
&N\ar[r]
&B
}
\quad\quad\quad
\xymatrix{
\tilde{\partial N}\ar[r]^\simeq\ar[d]_{p}
&\tilde{N}-\tilde{M}\ar[r]\ar[d]
&E\ar[d]^p\\
\partial N\ar[r]^\approx
&N-M\ar[r]
&B
}
$$
Since the inclusion map $s:M\buildrel{\approx}\over\hookrightarrow N$ is a homotopy equivalence and $p:\tilde{N}\twoheadrightarrow N$ is a (Serre) fibration,
 $\tilde{s}:\tilde{M}\buildrel{\simeq}\over\hookrightarrow \tilde{N}$ is a (weak) homotopy equivalence.
Similarly, since the inclusion $\partial N\buildrel{\approx}\over\hookrightarrow N-M$ is a homotopy equivalence,
the inclusion $\tilde{\partial N}\buildrel{\simeq}\over\hookrightarrow \tilde{N}-\tilde{M}$ is also
a (weak) homotopy equivalence.

Since the inverse image of an excisive triad is an excisive triad,
 $(E,E-\tilde{M},\tilde{N})=(p^{-1}(B),p^{-1}(B-M),p^{-1}(N))$ is an excisive triad.
Therefore the composite
$$
\tilde{i}:(\tilde{N},\tilde{\partial N})\rightarrow (\tilde{N},\tilde{N}-\tilde{M})\rightarrow (E,E-\tilde{M})
$$
induces an isomorphism in cohomology.

Let $\tilde{u}$ be the image of the Thom class $u$ by
$
p^*:h^*(N,\partial N)\rightarrow h^*(\tilde{N},\tilde{\partial N})
$.
Let $\tilde{j}:E\rightarrow (E,E-M)$ be the canonical map.
By definition, $\tilde{\phi}^!$ is the composite

\xymatrix@1{
h^*(\tilde{M})\ar[r]^{\tilde{s}^{*-1}}_\cong
&h^*(\tilde{N})\ar[r]^{-\cup \tilde{u}}
&h^{*+b-m}(\tilde{N},\tilde{\partial N})\ar[r]^{\tilde{i}^{*-1}}_\cong
&h^{*+b-m}(E,E-\tilde{M})\ar[r]^{\tilde{j}^*}
&h^{*+b-m}(E)
}
Comparing with the definition of the shriek map of $\phi$ given
in Section~\ref{shriek d'un embedding}, since $\tilde{u}:=p^*(u)$,
we obviously have the naturality with respect to pull-backs:
\begin{equation}\label{naturalite par rapport aux produits fibres}
p^*\circ \phi^!=\tilde{\phi}^!\circ q^*.
\end{equation}

Till now, our construction of the shriek map $\tilde{\phi}$ follows
the construction of Tamanoi in the special case of the loop
coproduct~\cite{tamanoi:capproducts} and of the loop coproduct~\cite{tamanoi-2007}
in string topology, except that Tamanoi, in each case, construct a specific
homotopy equivalence
$\tilde{N}\buildrel{\approx}\over\rightarrow\tilde{M}$ replacing our homotopy
equivalence $\tilde{s}:\tilde{M}\buildrel{\approx}\over\rightarrow\tilde{N}$.
As remarked by Tamanoi~\cite[p. 8]{tamanoi:capproducts},
note that in order to define $\tilde{\phi}$, we don't need to know
if the total space $q^*(S(\nu))$ of the bundle induced by pulling-back $S(\nu)$,
is diffeomorphic~\cite[Proposition 5.3]{Stacey:tdiftopoloops},
homeomorphic~\cite[p. 8]{Almeria}, or homotopy equivalent to
$\tilde{\partial N}$.

Although, we don't need it in this note, let us prove that
 $q^*(S(\nu))$ is homotopy equivalent to
$\tilde{\partial N}$ for completeness:
\begin{proposition}
Let $q^*(D(\nu))$ and $q^*(S(\nu))$ the pull-backs of the closed disk bundle
$D(\nu)$ and of the sphere disk bundle $S(\nu)$ along the (Serre) fibration
$q:\tilde{M}\twoheadrightarrow M$. Then there exist a (weak) homotopy equivalence
$$
\tilde{exp}:q^*(D(\nu))\buildrel{\simeq}\over\rightarrow\tilde{N}
$$
whose restriction to  $q^*(S(\nu))$
$$
\tilde{exp}:q^*(S(\nu))\buildrel{\simeq}\over\rightarrow\tilde{\partial N}
$$
is also a (weak) homotopy equivalence.
\end{proposition}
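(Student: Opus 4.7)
The plan is to realize both $q^{*}(D(\nu))$ and $\tilde{N}$ as pullbacks of the Serre fibration $p$ along two maps $D(\nu)\to B$ that are homotopic via the normal bundle geometry, to construct $\tilde{\exp}$ by homotopy lifting along that homotopy, and to verify the weak equivalence property by comparing fibres.

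Let $\pi:D(\nu)\to M$ denote the bundle projection and $\iota:N\hookrightarrow B$ the inclusion. Since $\exp:D(\nu)\xrightarrow{\cong}N$ is a diffeomorphism, the map $\exp^{-1}\circ p$ exhibits $\tilde{N}=p^{-1}(N)$ as the pullback of $p$ along $\iota\circ\exp:D(\nu)\to B$. By pasting pullback squares, $q^{*}(D(\nu))\cong\tilde{M}\times_{M}D(\nu)$ is the pullback of $p$ along $\phi\circ\pi$. Restriction over $S(\nu)\subset D(\nu)$ identifies $\tilde{\partial N}$ and $q^{*}(S(\nu))$ as the corresponding pullbacks. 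Consider the straight-line homotopy $H:D(\nu)\times I\to B$ defined by $H(v,t):=\iota\exp(t\cdot v)$, where $t\cdot v$ denotes scalar multiplication in the fibres of $\nu$; then $H(\cdot,0)=\phi\circ\pi$ and $H(\cdot,1)=\iota\circ\exp$. Applying the homotopy lifting property of $p$ to the composite $q^{*}(D(\nu))\times I\to D(\nu)\times I\xrightarrow{H}B$, starting from the canonical map $q^{*}(D(\nu))\to\tilde{M}\xrightarrow{\tilde{\phi}}E$ at $t=0$, yields a lift whose value at $t=1$ is the desired map $\tilde{\exp}$. Since $p\circ\tilde{\exp}$ factors through $\iota\circ\exp$, its image lies in $p^{-1}(N)=\tilde{N}$, and the restriction of $\tilde{\exp}$ to $q^{*}(S(\nu))$ lies in $p^{-1}(\partial N)=\tilde{\partial N}$.

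For the weak equivalence, observe that $\tilde{\exp}$ is a morphism of Serre fibrations over $D(\nu)$. On the fibre over $v\in D(\nu)$, it acts as the parallel transport of $p$ along the path $t\mapsto\iota\exp(tv)$ joining $\phi(\pi(v))$ to $\iota\exp(v)$; this is a weak homotopy equivalence since $p$ is a Serre fibration. The five lemma applied to the long exact sequences of homotopy groups of the two fibrations then yields that $\tilde{\exp}:q^{*}(D(\nu))\to\tilde{N}$ is a weak equivalence, and the identical argument restricted to $S(\nu)\subset D(\nu)$ handles the boundary case. The only mildly subtle step is verifying that the lift produced by homotopy lifting induces fibrewise parallel transport, which follows from the fact that parallel transport of a Serre fibration along a path is, up to homotopy, the restriction of any homotopy lift of the path.
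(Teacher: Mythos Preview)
Your proof is correct and follows essentially the same strategy as the paper: both recognize $q^*(D(\nu))$ and $\tilde N$ as pullbacks of the fibration $p$ along two homotopic maps $D(\nu)\to B$ (namely $\phi\circ\pi$ and $\iota\circ\exp$), and deduce that the pullbacks are weakly equivalent in a way compatible with the restriction to $S(\nu)$. The only differences are cosmetic: the paper invokes Spanier's theorem that pullbacks of a fibration along homotopic maps are fibre homotopy equivalent, whereas you unpack that theorem by hand via the homotopy lifting property and the five lemma; and you write down the explicit linear homotopy $H(v,t)=\iota\exp(tv)$, while the paper just asserts the triangle commutes up to some homotopy $H$.
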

\begin{proof}
Since the bundle projection $\nu:D(\nu)\buildrel{\approx}\over\twoheadrightarrow M$ is a homotopy inverse to the zero section map $M\rightarrow D(\nu)$,
the following triangle commutes up to an homotopy
$H:[0,1]\times D(\nu)\rightarrow N$.
$$
\xymatrix{
D(\nu)\ar[d]^{\approx}_{\nu}\ar[r]^{exp}_\cong
&N\\
M\ar[ur]_s
}$$
The restriction of $H$ to $[0,1]\times S(\nu)$ is a homotopy
between the composite of $exp$ with the inclusion map,
$
S(\nu)\build\rightarrow_\cong^{exp}\partial N\hookrightarrow N
$,
and the composite
$
S(\nu)\buildrel{\nu}\over\twoheadrightarrow M\buildrel{s}\over\hookrightarrow N
$.
Therefore the two spaces $\tilde{S(\nu)}$ and $q^*(S(\nu))$ obtained
by pulling back this two composites along the (Serre) fibration
$p:\tilde{N}\twoheadrightarrow N$
are (weakly) homotopy equivalent~\cite[Chap. 2 Theorem 14]{Spanier:livre},

$$
\xymatrix{
\tilde{S(\nu)}\ar[r]^\cong\ar[d]_{p}
&\tilde{\partial N}\ar[r]\ar[d]_{p}
&\tilde{N}\ar[r]\ar[d]_p
&E\ar[d]^p\\
S(\nu)\ar[r]_{exp}^\cong &\partial N\ar[r]
&N\ar[r]
&B}
\quad
\xymatrix{
q^*(S(\nu))\ar[r]\ar[d]_q
&\tilde{M}\ar[r]^{\tilde{s}}_\simeq\ar[d]_{q}
&\tilde{N}\ar[r]\ar[d]_p
&E\ar[d]^p\\
S(\nu)\ar[r]_{\nu}
&M\ar[r]_s^\approx
&N\ar[r]
&B}
$$
Denote by $\tilde{exp}:q^*(S(\nu))\buildrel{\simeq}\over\rightarrow
\tilde{S(\nu)}\buildrel{\cong}\over\rightarrow\tilde{\partial{N}}$
the composite of the weak homotopy equivalence and of the homeomorphism.

Similarly, the homotopy $H$ gives a weak homotopy equivalence
$q^*(D(\nu))\buildrel{\simeq}\over\rightarrow\tilde{N}$.
Since $\tilde{exp}:q^*(S(\nu))\buildrel{\simeq}\over\rightarrow
\tilde{\partial{N}}$ was defined using the restriction of
$H$ to $S(\nu)$, we claim that this weak homotopy equivalence
$q^*(D(\nu))\buildrel{\simeq}\over\rightarrow\tilde{N}$
extends $\tilde{exp}:q^*(S(\nu))\buildrel{\simeq}\over\rightarrow
\tilde{\partial{N}}$. Therefore, we call it also  $\tilde{exp}$.
\end{proof}
If $p:E\twoheadrightarrow B$ is a fiber bundle, then using~\cite[4.6.4]{Aguilar-Gitler-Prieto},
we have that $q^*(S(\nu))$ and $\tilde{\partial N}$
are homeomorphic, instead of just homotopy equivalent.
\begin{rem}\label{vrai Thom class}
Since $q^*(S(\nu))\twoheadrightarrow S(\nu)$ and $p:\tilde{S(\nu)}\twoheadrightarrow S(\nu)$
are fiber homotopy equivalent~\cite[Chap. 2 Theorem 14]{Spanier:livre},
the induced isomorphism in cohomology
$$\tilde{exp}:h^*(\tilde{N},\tilde{\partial N})\buildrel{\cong}\over\rightarrow
h^*(q^*(D(\nu)),q^*(S(\nu)))
$$
fits into the commutative square
$$
\xymatrix{
h^*(\tilde{N},\tilde{\partial N})\ar[r]^{\tilde{exp}^*}_\cong
&h^*(q^*(D(\nu)),q^*(S(\nu)))\\
h^*(N,\partial N)\ar[r]^{exp^*}_\cong\ar[u]^{p^*}
&h^*(D(\nu),S(\nu))\ar[u]_{q^*}
}
$$
Therefore, by naturality of the Thom
class~\cite[11.7.11]{Aguilar-Gitler-Prieto},

-$\tilde{u}$, which was defined as $p^*(u)$, coincides with the Thom
class of the vector bundle $q^*(\nu)$ induced by pulling-back $\nu$
along $q:\tilde{M}\twoheadrightarrow M$ and

-the composite

$$
h^*(\tilde{M})\build\rightarrow_\cong^{\tilde{s}^{*-1}}h^*(\tilde{N})
\build\rightarrow_\cong^{-\cup\tilde{u}}h^{*+b-m}(\tilde{N},\tilde{\partial N})
$$ 
is a Thom isomorphism (if $h^*$ satisfies the weak equivalence axiom, see Remark~\ref{Thom pas toujours iso}).
\end{rem}
\section{The Euler class}
Let $\tilde{s}_{rel}:\tilde{M}\rightarrow (\tilde{N},\tilde{\partial N})$
be the relative inclusion map. Since, by Remark~\ref{vrai Thom class},
$\tilde{u}$ is the Thom class of the vector bundle  $q^*(\nu)$ obtained by pull-back, $\tilde{s}_{rel}^*(\tilde{u})$ is its Euler class.
\begin{proposition}\label{shriek et Euler class}~\cite[Theorem 2.1 (5)]{Tamanoi:TQFTopenclosed}
For the shriek map of the pull-back of an embedding, we have the formula
for any $x\in h^*(\tilde{M})$:
$$
\tilde{\phi}^*\circ\tilde{\phi}^!(x)=x\cup \tilde{s}_{rel}^*(\tilde{u}).
$$
\end{proposition}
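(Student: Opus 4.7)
The plan is to unwind the four-step definition of $\tilde{\phi}^!$ and then apply $\tilde{\phi}^*$ stage by stage, exploiting the fact that the embedding $\tilde{\phi}$ factors through the tubular neighborhood. Writing $k:\tilde{N}\hookrightarrow E$ for the inclusion, we have $\tilde{\phi}=k\circ\tilde{s}$ and so $\tilde{\phi}^*=\tilde{s}^*\circ k^*$. The idea is that $\tilde{s}^*$ will cancel the leading $\tilde{s}^{*-1}$ in the definition of $\tilde{\phi}^!$, while $k^*$ will collapse the middle two maps ($\tilde{j}^*\circ \tilde{i}^{*-1}$) onto a single, more tractable ``relativization'' morphism.

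The key step is to prove that $k^*\circ\tilde{j}^*\circ\tilde{i}^{*-1}=\tilde{k}_{rel}^*$, where $\tilde{k}_{rel}:\tilde{N}\to(\tilde{N},\tilde{\partial N})$ is the canonical map (identity on underlying space). This follows immediately from the commutative square
$$
\xymatrix{
\tilde{N}\ar[r]^{\tilde{k}_{rel}}\ar[d]_{k}
&(\tilde{N},\tilde{\partial N})\ar[d]^{\tilde{i}}\\
E\ar[r]_-{\tilde{j}}
&(E,E-\tilde{M})
}
$$
which holds by the very construction of $\tilde{i}$ in Section~\ref{shriek d'un pull-back d'un embedding} (the underlying map of $\tilde{i}$ is $k$). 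Taking $h^*$ and inverting $\tilde{i}^*$ yields the claim. Consequently
$$
k^*\circ\tilde{\phi}^!(x)=\tilde{k}_{rel}^*\bigl(\tilde{s}^{*-1}(x)\cup\tilde{u}\bigr).
$$

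Next I apply naturality of the mixed absolute/relative cup product along the canonical map $\tilde{k}_{rel}$, a standard property of any multiplicative generalized cohomology theory, to rewrite the right-hand side as $\tilde{s}^{*-1}(x)\cup\tilde{k}_{rel}^*(\tilde{u})$, now an absolute cup product in $h^*(\tilde{N})$. Finally, applying $\tilde{s}^*$ and using naturality of the absolute cup product, together with the factorization $\tilde{s}_{rel}=\tilde{k}_{rel}\circ\tilde{s}$, gives
$$
\tilde{\phi}^*\circ\tilde{\phi}^!(x)=\tilde{s}^*\bigl(\tilde{s}^{*-1}(x)\bigr)\cup \tilde{s}^*\bigl(\tilde{k}_{rel}^*(\tilde{u})\bigr)=x\cup\tilde{s}_{rel}^*(\tilde{u}),
$$
as desired. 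No real obstacle is expected: the proof is a diagram chase, and the only point requiring care is invoking the naturality of the mixed cup product, which is a standard axiom one can cite (e.g.\ from \cite{Aguilar-Gitler-Prieto} or \cite{Dieck:algtop}) and which holds without any weak-equivalence hypothesis on $h^*$.
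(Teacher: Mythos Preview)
Your proof is correct and follows essentially the same approach as the paper. The paper uses the single commutative square
$$
\xymatrix{
(\tilde{N},\tilde{\partial N})\ar[r]^{\tilde{i}}
&(E,E-\tilde{M})\\
\tilde{M} \ar[r]_{\tilde{\phi}}\ar[u]^{\tilde{s}_{rel}}
& E\ar[u]_{\tilde{j}}
}
$$
to conclude directly that $\tilde{\phi}^*\circ\tilde{j}^*\circ\tilde{i}^{*-1}=\tilde{s}_{rel}^*$, and then invokes the $h^*(\tilde{N})$-linearity of $\tilde{s}_{rel}^*$ (which is exactly your ``naturality of the mixed cup product''); your version simply factors this square through $\tilde{N}$ via $\tilde{s}_{rel}=\tilde{k}_{rel}\circ\tilde{s}$ and $\tilde{\phi}=k\circ\tilde{s}$, treating the two pieces separately, but the content is identical.
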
 
For an embedding $\phi:M\hookrightarrow B$, this formula is well known
~\cite[Theorem 6.1 (5)]{Kawakubo:transformationgroups}.
For $\tilde{\phi}$, the pull-back of an embedding, the proof will be similar~\cite[p. 282]{Kawakubo:transformationgroups}.
\begin{proof}
Remark that the following square commutes
$$
\xymatrix{
(\tilde{N},\tilde{\partial N})\ar[r]^{\tilde{i}}
&(E,E-\tilde{M})\\
\tilde{M} \ar[r]_{\tilde{\phi}}\ar[u]^{\tilde{s}_{rel}}
& E.\ar[u]_{\tilde{j}}
}
$$
Remark also that
$$\tilde{s}_{rel}^*:h^*(\tilde{N},\tilde{\partial N})\rightarrow h^*(\tilde{M})
$$
is $h^*(\tilde{N})$-linear where $h^*(\tilde{N})$ acts on $h^*(\tilde{M})$
by restriction of scalar with respect to the algebra morphism
$\tilde{s}^*:h^*(\tilde{N})\rightarrow h^*(\tilde{M})
$. Therefore, by definition of $\tilde{\phi}^!$,
\begin{multline*}
\tilde{\phi}^*\circ\tilde{\phi}^!(x)
=\tilde{\phi}^*\circ\tilde{j}^*\circ\tilde{i}^{*-1}
(\tilde{s}^{*-1}(x)\cup\tilde{u})\\
=\tilde{s}_{rel}^*\circ \tilde{i}^{*}\circ \tilde{i}^{*-1}
(\tilde{s}^{*-1}(x)\cup\tilde{u})\\
=\left(\tilde{s}^*\circ \tilde{s}^{*-1}(x)\right)\cup \tilde{s}_{rel}^*(\tilde{u})
=x\cup \tilde{s}_{rel}^*(\tilde{u})
\end{multline*}
\end{proof}
Consider the commutative diagram
$$
\xymatrix{
\tilde{M}\ar[r]^{\tilde{s}_{rel}}\ar[d]_q
&(\tilde{N},\tilde{\partial N})\ar[d]^p\\
M\ar[r]^{s_{rel}}
&(N,\partial N)
}
$$
Since, by definition, $\tilde{u}:=p^*(u)$, we obviously have
\begin{equation}\label{naturalite classe d'Euler}
\tilde{s}_{rel}^*(\tilde{u})=q^*\circ s_{rel}^*(u).
\end{equation}
Note that, by definition (~\cite[p. 279]{Kawakubo:transformationgroups}
or~\cite[p. 98]{Milnor-Stasheff}), $s_{rel}^*(u)$ is the Euler class
of the normal bundle $\nu$.
Since we have remarked that $\tilde{s}_{rel}^*(\tilde{u})$
is the Euler class of the vector bundle obtained by pulling-back $\nu$,
this formula is just the naturality of the Euler
class~\cite[Property 9.2]{Milnor-Stasheff}.
\begin{cor}\label{shriek et inclusion fibre nulle}
Let $k:F\hookrightarrow E$ be the inclusion of the fiber of the fibration $p$.
If $b>m$ then $k^*\circ \tilde{\phi}^! =0$ in singular cohomology.
\end{cor}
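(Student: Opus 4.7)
The plan is to show that, because $b>m$, the fiber inclusion $k$ can be homotoped to a map whose image avoids $\tilde{M}\subset E$, and then to exploit the factorization of $\tilde{\phi}^!$ through the relative cohomology of the pair $(E,E-\tilde{M})$.

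For the geometric step I would first note that the path-component $B_0$ of the chosen basepoint $b_0$ is an open $b$-manifold and that $\phi(M)\cap B_0$ is a closed subset of $B_0$ of dimension at most $m<b$, hence with dense complement. I would therefore choose $b_1\in B_0\setminus\phi(M)$ arbitrarily close to $b_0$ and join them by a path $\gamma$ in $B_0$. Lifting $\gamma\circ\mathrm{pr}_I:F\times I\to B$ along the Serre fibration $p$ with initial condition $k$ produces a homotopy $H:F\times I\to E$ from $k$ to a map $k_1:F\to p^{-1}(b_1)\subset E\setminus\tilde{M}$. Thus $k$ is homotopic in $E$ to $\iota\circ k_1$, where $\iota:E\setminus\tilde{M}\hookrightarrow E$ is the open inclusion.

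For the algebraic step, the construction in Section~\ref{shriek d'un pull-back d'un embedding} shows that every value of $\tilde{\phi}^!$ lies in the image of $\tilde{j}^*:h^{*+b-m}(E,E-\tilde{M})\to h^{*+b-m}(E)$. In singular cohomology the long exact sequence of the pair $(E,E-\tilde{M})$ gives $\iota^*\circ\tilde{j}^*=0$, so combining this with homotopy invariance yields
$$k^*\circ\tilde{\phi}^!=k_1^*\circ\iota^*\circ\tilde{j}^*\circ\tilde{i}^{*-1}\circ(-\cup\tilde{u})\circ\tilde{s}^{*-1}=0.$$

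The main technical obstacle is the geometric step: one must ensure that a point $b_1$ outside $\phi(M)$ exists close enough to $b_0$ to lie in the same path-component, which is precisely where the codimension hypothesis $b>m$ intervenes, together with the closedness of $\phi(M)$ ensuring $\phi(M)\cap B_0$ is nowhere dense. Everything else is formal once one knows that $\tilde{\phi}^!(x)$ always sits in the image of $\tilde{j}^*$ and that restriction to the complement $E\setminus\tilde{M}$ kills that image. The restriction to singular cohomology in the statement is there to guarantee both homotopy invariance and the pair long exact sequence without the subtleties flagged in Remark~\ref{Thom pas toujours iso}.
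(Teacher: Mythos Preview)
Your argument is correct and takes a genuinely different route from the paper. The paper's proof is purely algebraic: it invokes Proposition~\ref{shriek et Euler class} to write $\tilde{\phi}^*\circ\tilde{\phi}^!(x)=x\cup q^*(e_\nu)$, then precomposes with the inclusion $l:F\hookrightarrow\tilde{M}$ of the fiber of $q$ to get $k^*\circ\tilde{\phi}^!(x)=l^*(x)\cup\varepsilon^*\eta^*(e_\nu)$, where $\eta:\{*\}\hookrightarrow M$; since $e_\nu\in H^{b-m}(M)$ with $b-m>0$, the restriction $\eta^*(e_\nu)$ lives in $H^{b-m}(\ast)=0$. Your approach instead displaces the fiber geometrically off $\tilde{M}$ and uses only the factorization of $\tilde{\phi}^!$ through $\tilde{j}^*$ together with the exactness $\iota^*\circ\tilde{j}^*=0$.

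Two remarks. First, your explanation of why the statement is phrased for singular cohomology is off: homotopy invariance and the long exact sequence of a pair are axioms for \emph{every} generalized cohomology theory, so your argument actually proves the corollary for any $h^*$ (this is a genuine strengthening---the paper's proof really needs $h^{b-m}(\ast)=0$, which fails for generalized $h^*$). Second, the lifting $H:F\times I\to E$ along a \emph{Serre} fibration is only guaranteed when $F$ has the homotopy type of a CW-complex; for singular cohomology this is harmless (pass to a CW-approximation of $F$), and in fact the paper's Remark~\ref{changement de fibres} on fiber transport gives exactly the identity $i_{b_0}^*=w_\#^*\circ i_{b_1}^*$ you need, bypassing the explicit lift.
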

\begin{proof}
Let $l:F\hookrightarrow \tilde{M}$ be the inclusion of the fiber of the induced fibration $q$.
We have the commutative diagram
$$\xymatrix{
F\ar[r]^l\ar[d]_\varepsilon\ar@/^2pc/[rr]^k
&\tilde{M}\ar[r]^{\tilde{\phi}}\ar[d]_{q}
&E\ar[d]^p\\
\{*\}\ar[r]_\eta
&M\ar[r]_\phi
&B}
$$
where the two squares are pull-backs.
By Proposition~\ref{shriek et Euler class} and equation~(\ref{naturalite classe d'Euler}),
$$
\tilde{\phi}^*\circ\tilde{\phi}^!(x)=x\cup q^*(e_\nu)
$$
where $e_\nu\in H^{b-m}(M)$ is the Euler class of the normal bundle $\nu$.
Therefore
$$
k^*\circ \tilde{\phi}^!(x)=l^*\circ\tilde{\phi}^*\circ\tilde{\phi}^!(x)=l^*(x)\cup l^*\circ q^*(e_\nu)=l^*(x)\cup \varepsilon^*\circ \eta^*(e_\nu)=0.
$$
\end{proof}

\part{The general case}
\section{The open string (co)products}
In this section, we define the joining product $\wedge$ of Sullivan~\cite{Sullivan:openclosedstring} that following Tamanoi~\cite{Tamanoi:TQFTopenclosed}, we prefer
to call the open string product.
And more important for us, we define a generalized version of the cutting at time $1/2$ product $\vee_{1/2}$ of Sullivan~\cite{Sullivan:openclosedstring} that following Tamanoi~\cite{Tamanoi:TQFTopenclosed}, we prefer
to call the open string coproduct.

Let $f:F\rightarrow E$ and $g:G\rightarrow E$ be two maps.
Let $$\leftidx{^f}{E}{^g}=\{(a,\omega,b)\in F\times E^I\times G/ f(a)=\omega(0),\;g(b)=\omega(1)\}$$
denote the {\em homotopy fibre product} of $f$ and $g$ which is obtained by the following pull-back
\xymatrix{
{^f}E^g\ar[r]\ar[d]_{(ev_0,ev_1)}
&E^I\ar[d]^{(ev_0,ev_1)}\\
F\times G\ar[r]_-{(f\times g)}
&E\times E.
}
\begin{defin}\label{pull-back fibre ou transverse d'un embedding}
A continuous map $g:G\rightarrow E$ is the {\em pull-back of an embedding}
if it is equipped with a pull-back diagram
$$\xymatrix{
G\ar[r]^{g}\ar[d]_{q}
&E\ar[d]^p\\
M\ar[r]_\phi
&B
}
$$
where $\phi:M\hookrightarrow B$ a proper embedding between two manifolds of codimension $m$
with $h^*$-oriented normal bundle
and where the map $p$ is a (Serre) fibration (This is exactly the case considered in
Section~\ref{shriek d'un pull-back d'un embedding}).
\end{defin}
\begin{rem}\label{pull-back transverse d'un embedding}
Suppose that a continous map $g:G\rightarrow E$ fits into a 
pull-back diagram
$$\xymatrix{
G\ar[r]^{g}\ar[d]_{q}
&E\ar[d]^p\\
M\ar[r]_\phi
&B
}
$$
where $\phi:M\hookrightarrow B$ a proper embedding between two manifolds of codimension $m$
with $h^*$-oriented normal bundle
and where the map $p$ is smooth and is transverse to $\phi$.

Then $g:G\hookrightarrow E$ a proper embedding between two manifolds of codimension $m$
with $h^*$-oriented normal bundle.
\end{rem}
A common example to both definition~\ref{pull-back fibre ou transverse d'un embedding} and remark~\ref{pull-back transverse d'un embedding} is when $p$ is a smooth fibre bundle.
\begin{proof}[Proof of Remark~\ref{pull-back transverse d'un embedding}]
Since $\phi$ and $p$ are transverse,
$g:G\hookrightarrow E$ is an embedding of codimension $m$.
Note that the normal bundle of $g$, $\nu_{g}$, is the pull-back $q^*(\nu_\phi)$ of the normal bundle of $\phi$, $\nu_\phi$, along $q$.
Therefore since $\nu_\phi$ is $h^*$-oriented, $\nu_{g}$ is also
$h^*$-oriented~\cite[11.7.11]{Aguilar-Gitler-Prieto} without assuming
that $G$ or $E$ is $h^*$-orientable.
Since $\phi(M)$ is a closed subset of $B$, $g(G)=p^{-1}(\phi(M))$ is a closed subset of $E$.
\end{proof}
\begin{theor}~\cite{Sullivan:openclosedstring}
Let $f:F\rightarrow E$, $g:G\rightarrow E$  and $h:H\rightarrow E$ be three maps.

\noindent 1) If $G$ is a smooth $h^*$-oriented manifold without boundary of dimension $n$ then there is a
open string product
$$lp_{f,g,h}:H_*({^f}E^g)\otimes H_*({^g}E^h)\rightarrow H_{*-n}({^f}E^h).$$

\noindent 2) Suppose that $g$ is the pull-back of an embedding in the
sense of definition~\ref{pull-back fibre ou transverse d'un embedding}.
Then there is a open string coproduct
$$lcp_{f,g,h}:h^*({^f}E^g)\otimes h^*({^g}E^h)\rightarrow h^{*+m}({^f}E^h).$$
\end{theor}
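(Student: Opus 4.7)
The plan is to realize each operation as a composite involving the shriek machinery of Section~\ref{shriek d'un pull-back d'un embedding}, applied to two different embeddings: the diagonal $\Delta_G$ of $G$ for the product, and $\phi:M\hookrightarrow B$ (pulled back) for the coproduct. Throughout let $P := {^f}E^g \times_G {^g}E^h$ denote the space of composable pairs $(\alpha,\beta)$ with $\alpha(1)=g(b)=\beta(0)$ for some $b\in G$. Write $j:P\hookrightarrow {^f}E^g \times {^g}E^h$ for the natural inclusion and $c:P\to {^f}E^h$ for concatenation of paths (with each half reparametrized to $[0,1/2]$ and $[1/2,1]$).

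For 1), I observe that $P$ fits into a pull-back square
$$
\xymatrix{
P \ar[r]^-{j} \ar[d] & {^f}E^g \times {^g}E^h \ar[d]^{(ev_{1},ev_{0})} \\
G \ar[r]_-{\Delta_G} & G \times G.
}
$$
Since $G$ is a smooth $H^{*}$-oriented $n$-manifold without boundary, $\Delta_G$ is a proper embedding of codimension $n$ with normal bundle $TG$, hence $H^{*}$-oriented; and both $ev_0$ and $ev_1$ factor through the path-space fibration $E^I \twoheadrightarrow E \times E$, so $(ev_1,ev_0)$ is a Serre fibration. Therefore $j$ is the pull-back of an embedding in the sense of Definition~\ref{pull-back fibre ou transverse d'un embedding}. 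Running the construction of Section~\ref{shriek d'un pull-back d'un embedding} in homology -- replacing $\tilde{j}^{*}$ by the collapse map $H_{*}({^f}E^g \times {^g}E^h)\to H_{*}(\tilde{N},\tilde{\partial N})$ and $-\cup\tilde{u}$ by $-\cap\tilde{u}$ -- yields an umkehr $j_{!}:H_{*}({^f}E^g \times {^g}E^h)\to H_{*-n}(P)$. I then set $lp_{f,g,h}:=c_{*}\circ j_{!}\circ\times$, where $\times$ is the homology cross product.

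For 2), the evaluation $ev_{1/2}:{^f}E^h\to E$ is a Serre fibration, so its composition with the fibration $p:E\twoheadrightarrow B$ of Definition~\ref{pull-back fibre ou transverse d'un embedding} is again a Serre fibration ${^f}E^h\to B$. Pulling the embedding $\phi:M\hookrightarrow B$ back along $p\circ ev_{1/2}$ produces a map $\tilde{c}:P'\to {^f}E^h$ which is the pull-back of an embedding, where $P':=\{(b,\omega)\in G\times {^f}E^h : \omega(1/2)=g(b)\}$. The classical cut-at-time-$1/2$ rule $(b,\omega)\mapsto(\omega|_{[0,1/2]},\omega|_{[1/2,1]})$ (with each half rescaled to $[0,1]$) is a homeomorphism $P'\cong P$ under which $\tilde{c}$ becomes $c$. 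Section~\ref{shriek d'un pull-back d'un embedding} then produces $c^{!}:h^{*}(P)\to h^{*+m}({^f}E^h)$, and I define $lcp_{f,g,h}:=c^{!}\circ j^{*}\circ\times$, where $j^{*}$ is restriction and $\times$ is the cohomology cross product.

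The main obstacle in both parts is verifying the fibrancy hypotheses that Section~\ref{shriek d'un pull-back d'un embedding} requires: that $(ev_1,ev_0)$ is a Serre fibration in 1), and that $p\circ ev_{1/2}$ is one in 2). Both follow from the classical fact that evaluations on path spaces are Serre fibrations, together with the closure of Serre fibrations under products and composition. The only remaining book-keeping, in part 2), is the continuity of the cut-and-concatenate identification $P'\cong P$, handled by the usual reparametrization argument.
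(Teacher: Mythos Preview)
Your construction is correct and matches the paper's own: the paper uses exactly your two pull-back squares (with $\tilde{\Delta}$ playing the role of your $j$ and $\mu_{f,g,h}$ the role of your $c$), defines $lp_{f,g,h}:=\mu_{f,g,h*}\circ\tilde{\Delta}_{!}\circ\times$ via the diagonal $\Delta:G\hookrightarrow G\times G$, and defines $lcp_{f,g,h}:=\mu_{f,g,h}^{!}\circ\tilde{\Delta}^{*}\circ\times$ by pulling back $\phi$ along the fibration $p\circ ev_{1/2}$. The only cosmetic difference is that the paper records the cut-at-$1/2$ identification $P'\cong P$ implicitly by declaring the upper right square a pull-back, whereas you spell it out; your justification that $(ev_1,ev_0)$ is a fibration is a bit telegraphic (factoring through a fibration is not by itself enough --- one should note that $ev_1:{^f}E^g\to G$ is the pull-back of the path fibration ${^f}E^{\mathrm{id}_E}\twoheadrightarrow E$ along $g$, and likewise for $ev_0$), but the conclusion is correct.
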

\begin{rem}
In~\cite{Sullivan:openclosedstring}, Sullivan consider the open string
product only in the case when $g$ is an embedding. 
\end{rem}
\begin{proof}
1) Consider the three pull-back squares (Compare with~\cite[Diagram p.11]{Tamanoi:TQFTopenclosed})
$$
\xymatrix{
{^f}E^g\times {^g}E^h\ar[d]_{ev_1\times ev_0}
& {^f}E^g\times_E {^g}E^h\ar[l]_{\tilde{\Delta}}\ar[r]^-{\mu_{f,g,h}}\ar[d]_{ev_{1/2}}
& {^f}E^h\ar[d]^{ev_{1/2}}\\
G\times G
& G\ar[l]^\Delta\ar[r]_{g}\ar[d]_{q}
&E\ar[d]_{p}\\
& M\ar[r]_\phi
&B
}
$$
The open string product $lp_{f,g,h}$ is defined as the Chas-Sullivan loop product above using the left pull-back square:
$$
H_*{^f}E^g\otimes H_*{^g}E^h\buildrel{\times}\over\rightarrow H_*{^f}E^g\times {^g}E^h
\buildrel{\tilde{\Delta}_!}\over\rightarrow H_{*-n}{^f}E^g\times_E {^g}E^h
\buildrel{\mu_{f,g,h*}}\over\rightarrow H_{*-n}{^f}E^h.
$$

2) Since  $p$ is a (Serre) fibration, then the composite
$$\xymatrix@1{{^f}E^h\ar[r]^{ev_{1/2}}
&E\ar[r]^{p}
&B
}$$
is also a fibration. Therefore using the total right rectangle, since $\phi:M\hookrightarrow B$ is an embedding, we obtain the shriek map
$$
\mu_{f,g,h}^!:h^*({^f}E^g\times_E {^g}E^h)\rightarrow h^{*+m}({^f}E^h).
$$
The open string coproduct $lcp_{f,g,h}$ is now defined as the composite
$$
h^*({^f}E^g)\otimes h^*({^g}E^h)\buildrel{\times}\over\rightarrow h^*({^f}E^g\times {^g}E^h)
\buildrel{\tilde{\Delta}^*}\over\rightarrow h^{*}({^f}E^g\times_E {^g}E^h)
\buildrel{\mu_{f,g,h}^!}\over\rightarrow h^{*+m}({^f}E^h).
$$
\end{proof}
\section{A simple formula for the open string coproduct}
\begin{theor}\label{formule open string coproduct}
Let $f:F\rightarrow E$, $g:G\rightarrow E$ and $h:H\rightarrow E$ be three maps.
Suppose that $g$ is the pull-back of an embedding in the sense of definition~\ref{pull-back fibre ou transverse d'un embedding}.
Let $e_{\nu}\in h^m(M)$ be the Euler class of the normal bundle of the embedding $\phi:M\hookrightarrow B$.

Let $ev_0: \leftidx{^g}{E}{^h}\twoheadrightarrow G$, $(a,w,b)\mapsto a$
be the first projection map.
Let $ev_1: \leftidx{^f}{E}{^g}\twoheadrightarrow G$, $(a,w,b)\mapsto b$
be the second projection map.

Let $\sigma:G\hookrightarrow \leftidx{^g}{E}{^g}$, $b\mapsto (b,\text{constant path }g(b),b)$ be the section of both projections map $ev_0$ and $ev_1$ when $f=g=h$.

Then 

\noindent 1) the open string coproduct
$$lcp_{f,g,g}:h^*(\leftidx{^f}{E}{^g})\otimes h^*(\leftidx{^g}{E}{^g})\rightarrow h^{*+m}(\leftidx{^f}{E}{^g})$$
is given by
$$lcp_{f,g,g}(a\otimes b)=a\cup ev_1^*\left(\sigma^*(b)\cup q^*(e_{\nu})\right).$$

\noindent 2) the open string coproduct
$$lcp_{g,g,h}:h^*(\leftidx{^g}{E}{^g})\otimes h^*(\leftidx{^g}{E}{^h})\rightarrow h^{*+m}(\leftidx{^g}{E}{^h})$$
is given by
$$lcp_{g,g,h}(b\otimes c)=ev_0^*\circ \sigma^*(b)\cup c\cup ev_0^*\circ q^*(e_{\nu}).$$

\noindent 3) In the case $f=g=h$,
$$ev_0^*\circ q^*(e_{\nu})=ev_1^*\circ q^*(e_{\nu})\in h^*(\leftidx{^g}{E}{^g}).$$

\noindent 4) The ideal $\text{Ker } \sigma^*:h^*(\leftidx{^g}{E}{^g})\twoheadrightarrow h^*(G)$ satisfies
$$
\text{Ker } \sigma^*\cup ev_0^*\circ q^*(e_{\nu})=\{0\}.
$$

\noindent 5) the open string coproduct on $h^*(\leftidx{^g}{E}{^g})$
$$lcp_{g,g,g}:h^*(\leftidx{^g}{E}{^g})\otimes h^*(\leftidx{^g}{E}{^g})\rightarrow h^{*+m}(\leftidx{^g}{E}{^g})$$
is given by
$$lcp_{g,g,g}(a\otimes b)=ev_0^*\left(\sigma^*(a)\cup \sigma^*(b) \cup q^*(e_{\nu})\right)
=ev_1^*\left(\sigma^*(a)\cup \sigma^*(b) \cup q^*(e_{\nu})\right)
.$$

\noindent 6) Let $\alpha\in\leftidx{^g}{E}{^g}$. Denote by $\leftidx{^g}{E}{^g}_{[\alpha]}$
the path-connected component of $\alpha$.
Denote by
$
\leftidx{^g}{E}{^g}_0:=\cup_{\alpha\in G} \leftidx{^g}{E}{^g}_{[\sigma(\alpha)]}
$, i. e. the subspace of $\leftidx{^g}{E}{^g}$, union of the path-connected components $\text{Im }\pi_0(\sigma):\pi_0(G)\hookrightarrow \pi_0(\leftidx{^g}{E}{^g})$
(If $G$ is path-connected, there is only one). Then

the open string coproduct 
$lcp_{f,g,g}$ is trivial outside of $h^*(\leftidx{^f}{E}{^g})\otimes h^*(\leftidx{^g}{E}{^g}_0)$,

the open string coproduct 
$lcp_{g,g,h}$ is trivial outside of $h^*(\leftidx{^g}{E}{^g}_0)\otimes h^*(\leftidx{^g}{E}{^h})$,

the open string coproduct 
$lcp_{g,g,g}$ is trivial outside of $h^*(\leftidx{^g}{E}{^g}_0)\otimes h^*(\leftidx{^g}{E}{^g}_0)$.

\noindent 7) The open string coproduct 
$lcp_{f,g,g}$ maps $h^*(\leftidx{^f}{E}{^g}_{[\alpha]})\otimes h^*(\leftidx{^g}{E}{^g})$ to $h^{*+m}(\leftidx{^f}{E}{^g}_{[\alpha]})$.

The open string coproduct 
$lcp_{g,g,h}$ maps $h^*(\leftidx{^g}{E}{^g})\otimes h^*(\leftidx{^g}{E}{^h}_{[\alpha]})$ to $h^{*+m}(\leftidx{^g}{E}{^h}_{[\alpha]})$.

The image of the open string coproduct $lcp_{g,g,g}$
is contained in $\leftidx{^g}{E}{^g}_0$.

\noindent 8) Suppose that there is an integer $\text{dim }G$
such that $\forall i>\text{dim }G$, $h^i(G)=\{0\}$.
Then for all $a$, $b\in h^*(\leftidx{^g}{E}{^g})$ such that $\vert a\vert+\vert b\vert>\text{dim }G-m$,
$lcp_{g,g,g}(a\otimes b)=0$.
\end{theor}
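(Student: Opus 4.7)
The plan is to reduce statement 8 to an immediate consequence of the closed formula established in part 5. Recall that in part 5 we have shown
\[
lcp_{g,g,g}(a\otimes b)=ev_0^*\bigl(\sigma^*(a)\cup \sigma^*(b)\cup q^*(e_{\nu})\bigr).
\]
So the entire computation happens inside the cohomology of $G$, after pulling back along $\sigma$.

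The key step is then a degree count. Since $\sigma^*:h^*(\leftidx{^g}{E}{^g})\to h^*(G)$ preserves the grading, $\sigma^*(a)\cup\sigma^*(b)$ lies in $h^{|a|+|b|}(G)$, and $e_{\nu}\in h^m(M)$ so $q^*(e_{\nu})\in h^m(G)$. Consequently the cup product $\sigma^*(a)\cup\sigma^*(b)\cup q^*(e_{\nu})$ lies in $h^{|a|+|b|+m}(G)$. Under the hypothesis $|a|+|b|>\dim G-m$, we have $|a|+|b|+m>\dim G$, so this group is zero.

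Applying $ev_0^*$ to the zero element yields $lcp_{g,g,g}(a\otimes b)=0$, which is the conclusion. There is no real obstacle once part 5 is available; the whole content is that the loop coproduct factors through cohomology of the base $G$ shifted by $m$, so any cohomological finiteness of $G$ immediately bounds the range of nontriviality of $lcp_{g,g,g}$.
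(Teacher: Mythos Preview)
Your argument is correct and essentially identical to the paper's own proof of part 8): both invoke the formula from part 5), observe that $\sigma^*(a)\cup\sigma^*(b)\cup q^*(e_{\nu})$ lies in $h^{|a|+|b|+m}(G)$ with $|a|+|b|+m>\dim G$, and conclude it vanishes before applying $ev_0^*$.
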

\begin{proof}

\noindent 1) and 2) Recall that we have the following two pull-back squares
$$
\xymatrix{
& {^f}E^g\times_E {^g}E^h\ar[r]^-{\mu_{f,g,h}}\ar[d]_{ev_{1/2}}
& {^f}E^h\ar[d]^{ev_{1/2}}\\
& G\ar[r]_{g}\ar[d]_{q}
&E\ar[d]_{p}\\
& M\ar[r]_\phi
&B
}
$$

Denote by $(q\circ ev_{1/2})^*(\nu)$ the pull-back of $\nu$ along $q\circ ev_{1/2}$.
Let $e_{(q\circ ev_{1/2})^*(\nu)}$ by its Euler class.
By Proposition~\ref{shriek et Euler class}, for any $x\in h^*(\leftidx{^f}{E}{^g}\times_E \leftidx{^g}{E}{^h})$
$$
\mu^*_{f,g,h}\circ\mu^!_{f,g,h}(x)=x\cup e_{(q\circ ev_{1/2})^*(\nu)}.
$$
By formula~(\ref{naturalite classe d'Euler}) (Naturality of Euler class),
$$
 e_{(g\circ q)^*(TM)}= (q\circ ev_{1/2})^*(e_{\nu})
$$
where $e_{\nu}$ is the Euler class of the normal bundle.

Putting everything together, we have that
\begin{equation}\label{mu et son shriek}
\mu^*_{f,g,h}\circ\mu^!_{f,g,h}(x)=    x\cup ev_{1/2}^*\circ q^*(e_{\nu}).
\end{equation}
Note that till here, the same discussion for the open string product,
shows that
\begin{equation}\label{delta et son shriek}
\tilde{\Delta}^*\circ\tilde{\Delta}^!(x)=    x\cup ev_{1/2}^*(e_{TG}).
\end{equation}
But now, we will see that both $\mu^*_{f,g,g}$ and $\mu^*_{g,g,h}$ admit a retract.
In particular, $\mu^*_{g,g,g}$ admits two different retracts $i_1^*$ and $i_2^*$.
Let $i_1:\leftidx{^f}{E}{^g} \rightarrow \leftidx{^f}{E}{^g}\times_E \leftidx{^g}{E}{^g}$ be the inclusion map on the first factor defined by
$$i_1(a,w,b)=\left((a,w,b),(b,\text{constant path }g(b),b)\right)$$
for $a\in F$, $w\in E^I$ and $b\in G$ such that $f(a)=w(0)$ and $g(b)=w(1)$.
Let $i_2:\leftidx{^g}{E}{^h} \rightarrow \leftidx{^g}{E}{^g}\times_E \leftidx{^g}{E}{^h}$ be the inclusion map on the second factor defined by
$$i_2(a,w,b)=\left((a,\text{constant path }g(a),a),(a,w,b))\right)$$
for $a\in G$, $w\in E^I$ and $b\in H$ such that $(g(a),h(b))=(w(0),w(1))$.
The first inclusion $i_1$ is a section up to homotopy of $\mu_{f,g,g}$.
Therefore $i_1^*$ is a retract of $\mu_{f,g,g}^*$.

So by formula~(\ref{mu et son shriek}), since $ev_{1/2}\circ i_1$ is the projection map
$ev_1:\leftidx{^f}{E}{^g}\twoheadrightarrow G$, $(a,w,b)\mapsto b$, for any $x\in h^*(\leftidx{^f}{E}{^g}\times_E \leftidx{^g}{E}{^g})$,
\begin{multline*}
\mu_{f,g,g}^!(x)=i_1^*\circ \mu_{f,g,g}^*\circ \mu_{f,g,g}^!(x)
=i_1^*(x\cup ev_{1/2}^*\circ q^*(e_{\nu}))\\
= i_1^*(x)\cup i_1^*\circ ev_{1/2}^*\circ q^*(e_{\nu}))
= i_1^*(x)\cup ev_1^*\circ q^*(e_{\nu}).
\end{multline*}
Similarly since $i_2^*$ is a retract of $\mu_{g,g,h}^*$
and since $ev_{1/2}\circ i_2$ is the projection map
$ev_0:\leftidx{^g}{E}{^h}\twoheadrightarrow G$, $(a,w,b)\mapsto a$,
for any $x\in h^*(\leftidx{^g}{E}{^g}\times_E \leftidx{^g}{E}{^h})$,
$$\mu_{g,g,h}^!(x)= i_2^*(x)\cup ev_0^*\circ q^*(e_{\nu}).$$
Since the following two squares commute
$$
\xymatrix{
\leftidx{^f}{E}{^g}\ar[r]^-{i_1}\ar[d]_\Delta
& \leftidx{^f}{E}{^g}\times_N \leftidx{^g}{E}{^g}\ar[d]^{\tilde{\Delta}}\\
\leftidx{^f}{E}{^g}\times \leftidx{^f}{E}{^g}\ar[r]_{id\times (\sigma\circ ev_1)}
&\leftidx{^f}{E}{^g}\times \leftidx{^g}{E}{^g}
}
\quad\quad
\xymatrix{
\leftidx{^g}{E}{^h}\ar[r]^-{i_2}\ar[d]_\Delta
& \leftidx{^g}{E}{^g}\times_N \leftidx{^g}{E}{^h}\ar[d]^{\tilde{\Delta}}\\
\leftidx{^g}{E}{^h}\times \leftidx{^g}{E}{^h}\ar[r]_{(\sigma\circ ev_0)\times id}
&\leftidx{^g}{E}{^g}\times \leftidx{^g}{E}{^h},
}
$$
for any $a\in h^*(\leftidx{^f}{E}{^g})$ and $b\in h^*(\leftidx{^g}{E}{^g})$,
\begin{multline*}
\mu_{f,g,g}^!\circ\tilde{\Delta}^*(a\times b)
= i_1^*\circ\tilde{\Delta}^*(a\times b)\cup ev_1^*\circ q^*(e_{\nu})\\
=a\cup (\sigma\circ ev_1)^*(b)\cup ev_1^*\circ q^*(e_{\nu})
\end{multline*}
and for any $b\in h^*(\leftidx{^g}{E}{^g})$ and $c\in h^*(\leftidx{^g}{E}{^h})$,
$$
\mu_{g,g,h}^!\circ\tilde{\Delta}^*(b\times c)
=(\sigma\circ ev_0)^*(b)\cup c\cup ev_0^*\circ q^*(e_{\nu}).
$$

\noindent 3) Using 1), $$ lcop_{g,g,g}(1\otimes 1)=
ev_1^*\circ q^*(e_{\nu}).$$

Using 2), $$ lcop_{g,g,g}(1\otimes 1)=
ev_0^*\circ q^*(e_{\nu}).$$

\noindent 4) Let $\varepsilon$ be $0$ or $1$.
Since $\sigma^*\circ ev_\varepsilon^*=id$, using the split short exact sequence
$$
0\rightarrow \text{Ker } \sigma^*\buildrel{i}\over\hookrightarrow h^*(\leftidx{^g}{E}{^g})\buildrel{\sigma^*}\over\twoheadrightarrow h^*(G)\rightarrow 0,
$$
we obtain that $\text{Ker } \sigma^*=\text{Im }r_\varepsilon$ where
$r_\varepsilon:h^*(\leftidx{^g}{E}{^g})\twoheadrightarrow \text{Ker } \sigma^*$ is the retract of the inclusion defined
by $r_\varepsilon(a)=a-ev_\varepsilon^*\circ\sigma^*(a)$.

By 1) and 3), $$ lcop_{g,g,g}(a\otimes 1)=a\cup  ev_1^*\circ q^*(e_{\nu})=
a\cup  ev_0^*\circ q^*(e_{\nu}).$$

By 2), $$ lcop_{g,g,g}(a\otimes 1)=ev_0^*\circ\sigma^*(a)\cup  ev_0^*\circ q^*(e_{\nu}).$$
Therefore $r_0(a)\cup  ev_0^*\circ q^*(e_{\nu})=0$.

\noindent 5) By 4), $$b\cup  ev_0^*\circ q^*(e_{\nu})=ev_0^*\circ\sigma^*(b)\cup  ev_0^*\circ q^*(e_{\nu}).$$
Therefore by 2) $$ lcop_{g,g,g}(a\otimes b)=ev_0^*\circ\sigma^*(a)\cup ev_0^*\circ\sigma^*(b)\cup  ev_0^*\circ q^*(e_{\nu}).$$

By 4) and 3) $$a\cup  ev_1^*\circ q^*(e_{\nu})=ev_1^*\circ\sigma^*(a)\cup  ev_1^*\circ q^*(e_{\nu}).$$
Therefore by 1) and the graded commutativity of the cup product
$$
 lcop_{g,g,g}(a\otimes b)=ev_1^*\circ\sigma^*(a)\cup ev_1^*\circ\sigma^*(b)\cup  ev_1^*\circ q^*(e_{\nu}).
$$
\noindent 6) Since by definition, $\sigma$ arrives inside $\leftidx{^g}{E}{^g}_0$, $\sigma^*$ factorizes through the projection $h^*(\leftidx{^g}{E}{^g})\twoheadrightarrow h^*(\leftidx{^g}{E}{^g}_0)$.
So using 1) and 2), 6) is proved.

\noindent 7)
Let $a\in h^*(\leftidx{^f}{E}{^g}_{[\alpha]})$ and $b\in h^*(\leftidx{^g}{E}{^g})$.
The cohomology of a space is isomorphic to the product of the cohomology
algebras of its path-connected components.
Therefore the cup product with $ev_1^*(\sigma^*(b)\cup q^*(e_\nu))$ defines a linear
application from $h^*(\leftidx{^f}{E}{^g}_{[\alpha]})$ to itself.
So by 1), $lcop_{f,g,g}(a\otimes b)\in h^*(\leftidx{^f}{E}{^g}_{[\alpha]})$.

In the case $f=g$, by 6),
if $lcop_{g,g,g}(a\otimes b)$ is non-zero then the path-connected component of $\alpha$ belongs to the image of $\pi_0(\sigma)$ and in this case
$lcop_{g,g,g}(a\otimes b)\in h^*(\leftidx{^f}{E}{^g}_0)$.

\noindent 8) The element $\sigma^*(a)\cup \sigma^*(b) \cup q^*(e_{\nu})\in h^*(G)$ is of degree
$\vert a\vert+\vert b\vert +m >\text{dim }G$ and so is null. Using 5), we obtain that
$
 lcop_{g,g,g}(a\otimes b)=0
 $.
\end{proof}
\section{TNCZ fibrations}
\begin{lem}\label{tncz implique classe d'euler nulle cohomologie generalisee}
Suppose that $B$ is path-connected. Suppose also that $B$ is a CW-complex or that $h^*$ satisfies the weak equivalence axiom.
Let $F\buildrel{i}\over\hookrightarrow E\buildrel{p}\over\twoheadrightarrow B$ be
a (Serre) fibration with base $B$ which admits a section $\sigma:B\rightarrow E$
up to homotopy, i. e. $p\circ \sigma\thickapprox id_B$ and an element $e\in h^*(B)$
such that
$$
\text{Ker }\sigma^*\cup p^*(e)=\{0\}.
$$
If the fibration $p$ is Totally Non-Cohomologous to Zero, i. e. $h^*(i):h^*(E)\twoheadrightarrow h^*(F)$ is onto and if $\tilde{h}^*(F)$ is a finitely generated free graded ${h^*}$-module
 then $e=0$ or $\tilde{h}^*(F)= \{0\}$.
\end{lem}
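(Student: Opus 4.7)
The plan is to reduce everything to the Leray--Hirsch theorem. The hypotheses on $B$ (CW-complex, or weak equivalence axiom for $h^*$), the TNCZ assumption, and the finite generation of $\tilde h^*(F)$ as a free graded $h^*$-module are exactly what is needed so that $h^*(E)$ becomes a free $h^*(B)$-module via $p^*$, with basis any lift $\{\tilde e_\alpha\}_{\alpha\in A}$ of a homogeneous $h^*$-basis $\{e_\alpha\}_{\alpha\in A}$ of $h^*(F)$.

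The first concrete step is to adapt the basis to the section. I would pick the basis of $h^*(F)$ so that $e_{\alpha_0}=1$ and take $\tilde e_{\alpha_0}=1$; then for each $\alpha\neq\alpha_0$ I would replace $\tilde e_\alpha$ by $\tilde e_\alpha - p^*\sigma^*(\tilde e_\alpha)$. Since $p^*\sigma^*(\tilde e_\alpha)$ is an $h^*(B)$-multiple of $\tilde e_{\alpha_0}=1$, this is an elementary basis change preserving the Leray--Hirsch freeness; and, using $\sigma^*\circ p^*=\mathrm{id}$ (which follows from $p\circ\sigma\simeq\mathrm{id}_B$), the new lifts satisfy $\sigma^*(\tilde e_\alpha)=0$ for every $\alpha\neq\alpha_0$.

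With this adapted basis, every $x\in h^*(E)$ writes uniquely as $x = p^*(b_{\alpha_0}) + \sum_{\alpha\neq\alpha_0} p^*(b_\alpha)\cdot\tilde e_\alpha$, and applying $\sigma^*$ gives $\sigma^*(x)=b_{\alpha_0}$, so
\[ \text{Ker}\,\sigma^* = \bigoplus_{\alpha\neq\alpha_0} p^*(h^*(B))\cdot\tilde e_\alpha. \]
If $\tilde h^*(F)\neq\{0\}$ then there is some index $\alpha\neq\alpha_0$; the basis element $\tilde e_\alpha$ itself lies in $\text{Ker}\,\sigma^*$, so the hypothesis forces $\tilde e_\alpha\cup p^*(e) = p^*(e)\cdot\tilde e_\alpha = 0$ in $h^*(E)$, and the freeness of $\{\tilde e_\alpha\}$ then forces $p^*(e)=0$. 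Injectivity of $p^*$ (split by $\sigma^*$) yields $e=0$.

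The main point to be careful about is the validity of Leray--Hirsch in the generalized cohomology setting; this is precisely what the axiomatic hypotheses on $B$ and $h^*$ are designed to secure, and once it is available the rest of the argument is formal manipulation in a free module.
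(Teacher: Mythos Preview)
Your proof is correct and follows essentially the same route as the paper: choose lifts of an $h^*$-basis of $\tilde h^*(F)$ lying in $\text{Ker}\,\sigma^*$, apply Leray--Hirsch to obtain $h^*(E)$ as a free $h^*(B)$-module on $1$ together with these lifts, and read off $e=0$ from the vanishing of $p^*(e)\cup\tilde e_\alpha$. The only cosmetic difference is the order of operations---the paper first uses the splitting $\tilde h^*(E)=\text{Ker}\,\tilde h^*(\sigma)\oplus\text{Im}\,\tilde h^*(p)$ (together with $\text{Im}\,\tilde h^*(p)\subset\text{Ker}\,\tilde h^*(i)$) to pick the lifts in $\text{Ker}\,\sigma^*$ before invoking Leray--Hirsch, whereas you first invoke Leray--Hirsch and then perform the unipotent basis change $\tilde e_\alpha\mapsto\tilde e_\alpha-p^*\sigma^*(\tilde e_\alpha)$; the content is identical.
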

\begin{rem}\label{changement de fibres}
Let $p:E\twoheadrightarrow B$ be a  fibration. Let $\omega:I\rightarrow B$ be a path from $b$ to $b'$.
Let $w_{\#}:p^{-1}(b)\buildrel{\thickapprox}\over\rightarrow p^{-1}(b')$ be the induced homotopy equivalence
between the fibers~\cite[Theorem 2.8.12]{Spanier:livre}.
By definition, $w_{\#}$ commutes up to homotopy with the inclusions of fibers
$i_b:p^{-1}(b)\hookrightarrow E$ and $i_{b'}:p^{-1}(b')\hookrightarrow E$. In cohomology,
$i_b^*=w_{\#}^*\circ i_{b'}^*$~\cite[Proof of (17.9.3)]{Dieck:algtop}.
So $i_{b}^*$ is surjective if and only if $i_{b'}^*$ is surjective.
And given a family of vectors $c_j\in h^*(E)$, the $i_{b}^*(c_j)$'s form a ${h^*}$-basis of $h^*(p^{-1}(b))$
if and only if the $i_{b'}^*(c_j)$'s form a ${h^*}$-basis of $h^*(p^{-1}(b'))$.
\end{rem}
\begin{proof}
Let $b_0\in B$. Let $\varepsilon_F:F\rightarrow *$ be the unique map to a point.
Since $B$ is path-connected and $\tilde{h}^*(F):=\text{coker }\varepsilon_F^*:h^*\rightarrow h^*(F)$ does not
depend of a base point and is homotopy invariant~\cite[17.1.3]{Dieck:algtop}, we can chose any fibre $F$.
We take $F:=p^{-1}\left(\{p\circ\sigma(b_0)\}\right)$.
Since $h^*(i):h^*(E)\twoheadrightarrow h^*(F)$ is surjective, $\tilde{h}^*(i):\tilde{h}^*(E)\twoheadrightarrow \tilde{h}^*(F)$ is also surjective.

Since $\tilde{h}^*(\sigma)\circ \tilde{h}^*(p)=id$,
$
\tilde{h}^*(E)=\text{Ker }\tilde{h}^*(\sigma)\oplus \text{Im }\tilde{h}^*(p).
$
Since $\text{Im }\tilde{h}^*(p)\subset \text{Ker }\tilde{h}^*(i)$, the restriction of
$\tilde{h}^*(i)$ to $\text{Ker }\tilde{h}^*(\sigma)$ is also surjective.

Suppose that $\tilde{h}^*(F)\neq \{0\}$. Then 
there exists classes $c_j\in \text{Ker }\tilde{h}^*(\sigma)\subset\tilde{h}^*(E)$ such that
the $\tilde{h}^*(i)(c_j)$'s form a ${h^*}$-basis of $\tilde{h}^*(F)$.

Now let $b_0\in B$, $\sigma(b_0)\in E$ and $\sigma(b_0)\in F$ be the chosen base points $*$ of $B$, $E$ and $F$.
Denote by $\eta_{x_0}^X:\{x_0\}\hookrightarrow X$ be the inclusion of the base point $x_0$ into a based
space $X$. Then we have the canonical identification for a based space $X$ between $\tilde{h}^*(X)$
and $h^*(X,x_0)\cong \text{Ker } \eta_{x_0}^{X*}:h^*(X)\rightarrow h^*$.
Since $\sigma:(B,*)\hookrightarrow (E,*)$
and $i:(F,*)\hookrightarrow (E,*)$
are based maps, we can consider that the classes $c_j\in h^*(E,*)$, that
the  $i^*(c_{j})$'s form a $h^*$-basis of $h^*(F,*)$, that 
$\text{Ker }\tilde{h}^*(\sigma)=\text{Ker }\sigma^*$ and so that $c_{j}\cup p^*(e)=0$.

%

Since $h^*(F)\cong h^*(F,*)\oplus h^*$, the classes $c_j\in h^*(E,*)$
and the unit $1\in h^0(E)$ are send by $i^*$ to a $h^*$-basis of $h^*(F)$.
So by the Leray-Hirsch theorem for generalized cohomology (\cite[(17.8.4)]{Dieck:algtop} using Remark~\ref{changement de fibres}),  $h^*(E)$ is a free $h^*(B)$-modules with basis $1$ and the $c_j$'s. So $e=0$.
\end{proof}
\begin{rem}
In Lemma~\ref{tncz implique classe d'euler nulle cohomologie generalisee}, when the generalized cohomology $h^ *$ is a singular cohomology $H^*$, it is enough to suppose that $\tilde{H}^ q(F)$ is ${\Bbbk}$-free
module of finite type for each degree $q\geq 0$. Indeed in this case, we can apply the Leray-Hirsch theorem
for singular cohomology~\cite[Exercise 3 p. 51]{Hatcher:Serrespectral}
(see also~\cite[(17.8.1)]{Dieck:algtop} using Remark~\ref{changement de fibres} or ~\cite[Theorem 4.4]{Mimura-Toda:topliegroups} where the fibre of the fibration is assumed to be path-connected).
In Lemma~\ref{tncz implique classe d'euler nulle cohomologie singuliere} below, we improve further Lemma~\ref{tncz implique classe d'euler nulle cohomologie generalisee} for singular cohomology. The proof of Lemma~\ref{tncz implique classe d'euler nulle cohomologie singuliere} relies on the following interesting Lemma:
\end{rem}
\begin{lem}\label{differentiel dans SSS fibration avec section}
Let $F\buildrel{i}\over\hookrightarrow E\buildrel{p}\over\twoheadrightarrow B$ be
a (Serre) fibration with path-connected base $B$ which admits a section $\sigma:B\rightarrow E$
up to homotopy, i. e. $p\circ \sigma\thickapprox id_B$ and an element $e\in H^m(B)$
such that
$$
\text{Ker }\sigma^*\cup p^*(e)=\{0\}.
$$
Consider the cohomological Serre spectral sequence $(E_r^{*,*},d_r)$ associated
to the fibration $p$.
Suppose that the action of $\pi_1(B)$ on $H^*(F)$
is trivial.

Let $f$ be an element of $\tilde{H}^*(F)$.
Suppose that $f$ is in the image of
$i^*:H^*(E)\rightarrow H^*(F)$.

Denote by $e\otimes 1$, the image of $e$ by the canonical morphism~\cite[III.2.10.10]{Mimura-Toda:topliegroups}
$$
i_*:H^m(B)\rightarrow H^m(B;H^0(F))=E_2^{m,0}
$$

Denote by $1\otimes f$ the image of $f$ by the inverse of the canonical morphism~\cite[III.2.10.9]{Mimura-Toda:topliegroups}
$$
i^*_0:E_2^{0,*}=H^0(B;H^*(F))\buildrel{\cong}\over\rightarrow H^0(*;H^*(F))=H^*(F).
$$
Then the product $e\otimes 1\cup 1\otimes f\in E_2^{m,*}$ must be killed: there exist $r\geq 2$ and $x\in E_r^{*,*}$ such that $d_r(x)=e\otimes 1\cup 1\otimes f$.
\end{lem}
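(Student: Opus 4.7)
The plan is to interpret $e$ and $f$ inside the associated graded $E_\infty$ of the Serre spectral sequence and to exploit a multiplicative relation coming from a suitable lift of $f$ into $H^*(E)$. Since $f$ lies in the image of $i^*$, choose $c\in H^*(E)$ with $i^*(c)=f$. The identity $\sigma^*\circ p^*=\mathrm{id}$ gives the splitting $H^*(E)=p^*H^*(B)\oplus\ker\sigma^*$, so we may replace $c$ by $c-p^*\sigma^*(c)\in\ker\sigma^*$; this still satisfies $i^*(c)=f$ because $p\circ i$ factors through a point, so $i^*\circ p^*$ vanishes on positive degrees (the degree zero case is trivial when $F$ is path-connected, and handled analogously in general). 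By the hypothesis $\ker\sigma^*\cup p^*(e)=\{0\}$ we obtain the key relation $c\cup p^*(e)=0$ in $H^{m+*}(E)$.

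Next I analyze the two classes in the $E_2$ page. The element $e\otimes 1\in E_2^{m,0}$ is automatically a permanent cycle, since the differentials on the bottom row land in negative vertical degrees. The element $1\otimes f\in E_2^{0,*}$ is a permanent cycle because, under the identifications $E_\infty^{0,*}\hookrightarrow E_2^{0,*}=H^0(B;H^*(F))\cong H^*(F)$ (valid by the path-connectedness of $B$ and the trivial $\pi_1(B)$-action on $H^*(F)$), the subspace $E_\infty^{0,*}$ coincides with the image of the edge homomorphism $i^*:H^*(E)\to H^*(F)$, which contains $f$. Hence the product $e\otimes 1\cup 1\otimes f\in E_2^{m,*}$ is also a permanent cycle.

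The core of the argument is then to combine multiplicativity of the Serre spectral sequence with the Serre filtration degrees of $c$ and $p^*(e)$. With respect to that filtration, $c\in F^0H^*(E)$ projects to $1\otimes f\in E_\infty^{0,*}$, while $p^*(e)\in F^mH^m(E)$ projects to $e\otimes 1\in E_\infty^{m,0}$. Multiplicativity then implies that $c\cup p^*(e)\in F^mH^{m+*}(E)$ projects to $(1\otimes f)\cdot(e\otimes 1)=e\otimes 1\cup 1\otimes f$ in $E_\infty^{m,*}$. Since $c\cup p^*(e)=0$, this class vanishes in $E_\infty^{m,*}$. But in a first-quadrant spectral sequence a permanent cycle whose image in $E_\infty$ vanishes must become a boundary at some finite page, which gives the required $r\geq 2$ and $x\in E_r^{m-r,*+r-1}$ with $d_r(x)=e\otimes 1\cup 1\otimes f$ in $E_r^{m,*}$.

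The main obstacle is the bookkeeping in the third paragraph: one has to verify that the Serre filtration levels of $c$ and $p^*(e)$ are as claimed, that their images in the associated graded are indeed $1\otimes f$ and $e\otimes 1$ under the standard conventions, and that the spectral sequence product matches the cup product in $H^*(E)$ modulo higher filtration. Once these compatibilities are in place, the remainder of the argument is a formal consequence of the strong convergence of the first-quadrant Serre spectral sequence.
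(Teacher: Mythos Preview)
Your proposal is correct and follows essentially the same line as the paper's proof: choose a lift $c\in\ker\sigma^*$ of $f$, use the hypothesis to get $c\cup p^*(e)=0$, identify $p^*(e)$ and $c$ with $e\otimes 1$ and $1\otimes f$ in the associated graded via the Serre filtration, and conclude by multiplicativity that $e\otimes 1\cup 1\otimes f$ vanishes at $E_\infty$ and hence is a boundary. The paper's version is terser about the filtration bookkeeping you flag in your last paragraph, but the argument is the same.
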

\begin{proof}
Since $B$ is path-connected, the triviality of the local coefficients $H^*(F)$ implies that $i^*_0$ is an isomorphism
(and conversely by~\cite[III.1.18 (3)]{Mimura-Toda:topliegroups}). 

For degree reasons, $\forall r\geq 2, d_r(e\otimes 1)=0$
and $e\otimes 1$ in $E_\infty^{m,0}=F^mH^m(E)$ is $p^*(e)$. 
Since $f\in\text{Im }i^*$, $\forall r\geq 2, d_r(1\otimes f)=0$.
Let $q$ be the degree of $f$.
Let $c\in\tilde{H}^q(E)$ such that $\tilde{H}^*(i)(c)=f$.
As explained in the proof of Lemma~\ref{tncz implique classe d'euler nulle cohomologie generalisee},
$c$ can be chosen in $\text{Ker }\sigma^*$.
Then $1\otimes f$ in $E_\infty^{0,q}=H^q(E)/F^1 H^q(E)$
is the class of $c$. Therefore $e\otimes 1\cup 1\otimes f$ in $E_\infty^{m,q}=F^mH^{m+q}(E)/F^{m+1}H^{m+q}(E)$
is the class of $p^*(e)\cup c$.
Since $
\text{Ker }\sigma^*\cup p^*(e)=\{0\}
$,
$p^*(e)\cup c=0$.
Therefore $e\otimes 1\cup 1\otimes f$ must be killed.
\end{proof}
\begin{lem}\label{tncz implique classe d'euler nulle cohomologie singuliere}
Let $F\buildrel{i}\over\hookrightarrow E\buildrel{p}\over\twoheadrightarrow B$ be
a (Serre) fibration with path-connected base $B$ which admits a section $\sigma:B\rightarrow E$
up to homotopy, i. e. $p\circ \sigma\thickapprox id_B$ and an element $e\in H^m(B)$
such that
$$
\text{Ker }\sigma^*\cup p^*(e)=\{0\}.
$$
Suppose that $\forall n\in\mathbb{N}$, $H_n(B)$ is a finitely generated ${\Bbbk}$-module
or $\forall q\in\mathbb{N}$, $H^q(F)$ is a finitely generated ${\Bbbk}$-module.
Suppose also that $\forall q\geq 2$, $H^q(F)$ is a torsion free ${\Bbbk}$-module.

If the fibration $p$ is Totally Non-Cohomologous to Zero, i. e. $H^*(i):H^*(E)\twoheadrightarrow H^*(F)$ is onto and if $H^m(B)$ is a free ${\Bbbk}$-module then $e=0$ or $\tilde{H}^*(F)= \{0\}$.
\end{lem}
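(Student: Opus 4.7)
The plan is to argue by contradiction: assume $\tilde{H}^*(F)\neq\{0\}$ and deduce $e=0$. The main tool will be Lemma~\ref{differentiel dans SSS fibration avec section}, so I first verify its trivial-monodromy hypothesis. The image of $i^*:H^*(E)\to H^*(F)$ always lies in the $\pi_1(B)$-invariants of $H^*(F)$, since restrictions of global classes to a fibre are fixed by loop transport; TNCZ surjectivity then forces this image to be the whole of $H^*(F)$, so the local coefficient system is trivial.

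With trivial local coefficients in hand, I pick a non-zero $f\in\tilde{H}^q(F)$ (so $q\geq 1$). By TNCZ, $f\in\mathrm{Im}(i^*)$, and Lemma~\ref{differentiel dans SSS fibration avec section} yields some $r\geq 2$ and some $x\in E_r^{m-r,q+r-1}$ with $d_r(x)=e\otimes 1\cup 1\otimes f$ in the Serre spectral sequence of $p$.

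The heart of the argument is to upgrade ``killed by some $d_r$'' to ``zero already in $E_2^{m,q}$'', for which I would show that the spectral sequence collapses at $E_2$. Three ingredients combine: (i) TNCZ forces $d_r$ to vanish on $E_r^{0,*}$, since $E_\infty^{0,*}=E_2^{0,*}$ whenever $i^*$ is onto; (ii) for degree reasons $d_r$ vanishes on $E_r^{*,0}$, the target having negative vertical coordinate; (iii) the Leibniz rule then kills $d_r$ on every external product $e'\otimes f'$. The torsion-freeness assumptions ($H^q(F)$ torsion-free for $q\geq 2$ by hypothesis, $H^1(F;\Bbbk)$ automatically torsion-free over a PID via the universal coefficient theorem, and $H^0(F)$ automatically free) together with the stated finite-generation alternative identify $E_2^{p,q}=H^p(B;H^q(F))$ with $H^p(B)\otimes H^q(F)$ in the bidegrees of interest, so (iii) applies to every element there.

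Once collapse is established, $e\otimes 1\cup 1\otimes f=0$ in $E_2^{m,q}=H^m(B)\otimes H^q(F)$; the freeness of $H^m(B)$ and torsion-freeness of $H^q(F)$ make the tensor product torsion-free, and the vanishing of $e\otimes f$ with $f\neq 0$ forces $e=0$. The main obstacle I anticipate is step three, the collapse verification, and in particular the universal-coefficient identification $E_2^{p,q}\cong H^p(B)\otimes H^q(F)$: under the weaker of the two finiteness alternatives (only $H_n(B)$ finitely generated, rather than $H^q(F)$), one has to handle a possible $\mathrm{Ext}$ contribution from torsion in $H_{p-1}(B)$, which is where the argument becomes delicate.
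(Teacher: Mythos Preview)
Your proposal is correct and follows essentially the same route as the paper's proof: trivial monodromy from TNCZ, identification $E_2^{p,q}\cong H^p(B)\otimes H^q(F)$ via torsion-freeness of $H^q(F)$, collapse at $E_2$ from $d_r|_{E_r^{0,*}}=0$ and Leibniz, then Lemma~\ref{differentiel dans SSS fibration avec section} plus freeness of $H^m(B)$. Your flagged concern dissolves once you use the Tor-form of the universal coefficient theorem (the paper cites Spanier~5.5.10),
\[
0\to H^p(B)\otimes H^q(F)\to H^p\bigl(B;H^q(F)\bigr)\to\mathrm{Tor}^{\Bbbk}\bigl(H^{p+1}(B),H^q(F)\bigr)\to 0,
\]
valid under either finiteness alternative; torsion-freeness of $H^q(F)$ kills the Tor term outright, so no Ext contribution ever enters.
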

\begin{proof}
Since $H^*(i)$ is onto, the action of
$\pi_1(B)$ on $H^*(F)$ is trivial~\cite[III.Theorem 4.4]{Mimura-Toda:topliegroups}.

By hypothesis, $H^q(F)$ is a finitely generated
${\Bbbk}$-module or $\forall n\geq 0$ $H_n(B)$ is a finitely generated
${\Bbbk}$-module.
So since ${\Bbbk}$ is a principal ideal domain
by~\cite[Theorem 5.5.10]{Spanier:livre}, we have a short exact sequence
$$
0\rightarrow H^p(B)\otimes H^q(F)\buildrel{\mu}\over\rightarrow
E^{p,q}_2\rightarrow\text{Tor}^{\Bbbk}(H^{p+1}(B),H^q(F))\rightarrow 0
$$
where $\mu$ is a morphism of algebras.
Therefore, since $\forall q\geq 0$ $H^q(F)$ is torsion free,
$\forall p,q\in\mathbb{N}$ $E^{p,q}_2\cong H^p(B)\otimes H^q(F)$ as algebras.
Since $i^*$ is onto, $d_r$ is null on $E^{0,q}$.
Therefore the Serre spectral sequence collapses on the $E_2$-term
(Here we have reproved the well-known~\cite[III.Theorem 4.4]{Mimura-Toda:topliegroups}
with weaker hypothesis).

So by Lemma~\ref{differentiel dans SSS fibration avec section},
for any $f\in\tilde{H}^q(F)$, the element $e\otimes f\in E^{m,q}=H^m(B)\otimes H^q(F)$ must be zero.
Since $H^m(F)$ is free, $e\otimes f=0$ implies that $e=0$ or $f$ has torsion.
\end{proof}
The following lemma is a generalization of Lemma~\ref{tncz implique classe d'euler nulle cohomologie singuliere}
if the base $B$ of the fibration is not path-connected.
\begin{lem}\label{tncz implique classe d'euler nulle non connexe}
Let $B=\cup_{\beta\in\pi_0(B)}B_\beta$ the decomposition of $B$ into its path-connected components.
Let $p: E\twoheadrightarrow B$ be
a (Serre) fibration with base $B$ which admits a section $\sigma:B\rightarrow E$
up to homotopy, i. e. $p\circ \sigma\thickapprox id_B$ and an element
$e=(e_\beta)_{\beta\in\pi_0(B)}\in H^m(B)=\Pi_{\beta\in\pi_0(B)}H^m(B_\beta)$
such that
$$
\text{Ker }\sigma^*\cup p^*(e)=\{0\}.
$$
Denote by $F_\beta$, the fibre $p^{-1}(b)$ when $b\in B_\beta$.
For all $\beta\in\pi_0(B)$, suppose that $\forall n\in\mathbb{N}$, $H_n(B_\beta)$ is a finitely generated ${\Bbbk}$-module
or $\forall q\in\mathbb{N}$, $H^q(F_\beta)$ is a finitely generated ${\Bbbk}$-module.
Suppose also that $\forall \beta\in\pi_0(B)$, $\forall q\geq 2$, $H^q(F_\beta)$ is a torsion free ${\Bbbk}$-module.

If the fibration $p$ is Totally Non-Cohomologous to Zero, i. e. $\forall \beta\in\pi_0(B)$ $H^*(i_\beta):H^*(E)\twoheadrightarrow H^*(F_\beta)$ is onto and if $\forall \beta\in\pi_0(B)$ $H^m(B_\beta)$ is a free ${\Bbbk}$-module then $\forall \beta\in\pi_0(B)$ ($e_\beta=0$ or $\tilde{H}^*(F_\beta)= \{0\}$).
\end{lem}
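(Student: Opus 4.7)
The strategy is to reduce to Lemma~\ref{tncz implique classe d'euler nulle cohomologie singuliere} by restricting the entire situation to each path-connected component $B_\beta$ separately. The key fact is that, for singular cohomology, the product decompositions $H^*(B) = \prod_{\beta\in\pi_0(B)} H^*(B_\beta)$ and $H^*(E) = \prod_{\beta\in\pi_0(B)} H^*(E_\beta)$, where $E_\beta := p^{-1}(B_\beta)$, are natural with respect to every map in sight and compatible with cup products. So the entire hypothesis can be split componentwise.

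First I would set up the restricted fibrations. For each $\beta$, $p_\beta := p|_{E_\beta} : E_\beta \twoheadrightarrow B_\beta$ is again a Serre fibration, with typical fibre $F_\beta$. Since $p\circ\sigma \thickapprox id_B$ and $B_\beta$ is path-connected, any homotopy witnessing $p\circ\sigma \thickapprox id_B$ keeps $\sigma(B_\beta)$ inside $p^{-1}(B_\beta) = E_\beta$, so $\sigma$ restricts to a homotopy section $\sigma_\beta : B_\beta \to E_\beta$ of $p_\beta$.

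Next I would transfer the two cohomological hypotheses componentwise. Given $a_\beta\in\text{Ker}\,\sigma_\beta^*$, extend it by zero in the other components to an element $a\in\prod_\gamma H^*(E_\gamma) = H^*(E)$; then $a\in\text{Ker}\,\sigma^*$. The class $p^*(e)\in H^*(E)$ has $\beta$-component $p_\beta^*(e_\beta)$, and the cup product decomposes componentwise, so the global vanishing $a\cup p^*(e) = 0$ forces $a_\beta\cup p_\beta^*(e_\beta)=0$; hence $\text{Ker}\,\sigma_\beta^*\cup p_\beta^*(e_\beta) = \{0\}$. The TNCZ assumption is even more direct: the map $H^*(i_\beta) : H^*(E)\twoheadrightarrow H^*(F_\beta)$ factors as $H^*(E)\twoheadrightarrow H^*(E_\beta)\to H^*(F_\beta)$, so the second arrow is also onto.

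At this point all hypotheses of Lemma~\ref{tncz implique classe d'euler nulle cohomologie singuliere} are in place for the fibration $F_\beta\hookrightarrow E_\beta\twoheadrightarrow B_\beta$, the section $\sigma_\beta$ and the class $e_\beta\in H^m(B_\beta)$: path-connectedness of $B_\beta$ is built in, and the finite generation, torsion-freeness of $H^q(F_\beta)$ for $q\geq 2$, and freeness of $H^m(B_\beta)$ hold by assumption on each component. Applying the previous lemma to each $\beta$ separately yields $e_\beta=0$ or $\tilde H^*(F_\beta) = \{0\}$, which is exactly the desired conclusion. No step is a serious obstacle; the only point requiring a little care is the componentwise bookkeeping in the third paragraph, but this is routine since we are working with singular cohomology.
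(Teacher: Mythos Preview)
Your proof is correct and follows essentially the same approach as the paper: restrict the fibration, the section, and the class $e$ to each path-component $B_\beta$, verify that the hypothesis $\text{Ker}\,\sigma^*\cup p^*(e)=\{0\}$ and the TNCZ condition pass to the restricted data via the product decomposition $H^*(E)\cong\prod_\beta H^*(E_\beta)$, and then invoke Lemma~\ref{tncz implique classe d'euler nulle cohomologie singuliere} componentwise. The only cosmetic difference is that the paper writes the restricted total space as $E_{\pi_0(p)^{-1}(\beta)}$ (the union of path-components of $E$ lying over $\beta$) rather than $p^{-1}(B_\beta)$, but these coincide.
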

\begin{proof}
For all $\beta\in\pi_0(B)$, we apply Lemma~\ref{tncz implique classe d'euler nulle cohomologie singuliere}
to the fibration 
$$F_\beta\buildrel{i_\beta}\over\hookrightarrow E_{\pi_0(p)^{-1}(\beta)}\buildrel{p_\beta}\over\twoheadrightarrow B_\beta$$
obtained by restricting $p$ to the union $E_{\pi_0(p)^{-1}(\beta)}$ of path-connected components $\alpha$ of $E$ such
that $ \pi_0(p)(\alpha)=\beta$. The fibration $p_\beta$ admits the restriction of $\sigma$ to $B_\beta$,
$\sigma_\beta:B_\beta\rightarrow E_{\pi_0(p)^{-1}(\beta)}$ as section up to homotopy.
The product of maps $$\Pi_{\beta\in\pi_0(B)}\sigma^*_\beta:\Pi_{\beta\in\pi_0(B)}H^*(E_{\pi_0(p)^{-1}(\beta)})\rightarrow \Pi_{\beta\in\pi_0(B)}H^*(B_\beta)$$ can be identified with $\sigma^*:H^*(E)\rightarrow H^*(B)$.
Therefore $\text{Ker }\sigma^*\cup p^*(e)$ can be identified with
$\Pi_{\beta\in\pi_0(B)}(\text{Ker }\sigma^*_\beta\cup p^*_\beta(e_\beta))$.
\end{proof}
Of course, the following theorem generalizes the fundamental Corollary 11.4 of~\cite{Milnor-Stasheff}.
\begin{theor}\label{pull-back tncz implique cohomologie fibre ou classe d'euler nulle}
Let $g:G\hookrightarrow E$ be the pull-back of an embedding in the sense of definition~\ref{pull-back fibre ou transverse d'un embedding}.
Let $e_{\nu}\in h^m(M)$ be the Euler class of the normal bundle of the embedding $\phi:M\hookrightarrow B$.
Let $p_g:E^I\times_g G\twoheadrightarrow E$ be the fibration associated to $g$ defined by
$
p_g((\omega, b))=\omega(0)
$ for any $b\in G$ and any path $\omega:I\rightarrow E$ such that $\omega(1)=g(b)$.
Let $p_\phi:B^I\times_\phi M\twoheadrightarrow B$ be the fibration associated to $\phi$.

Suppose that $\forall n\in\mathbb{N}$, $H_n(G)$ is a finitely generated ${\Bbbk}$-module
or $\forall y\in B$, $\forall q\in\mathbb{N}$, $H^q(p_\phi^{-1}(y))$ is a finitely generated ${\Bbbk}$-module.
Suppose also that $\forall y\in B$, $\forall q\geq 2$, $H^q(p_\phi^{-1}(y))$ is a torsion free ${\Bbbk}$-module.

If the fibration $p_g$ is Totally Non-Cohomologous to Zero, i. e.
$\forall x\in E$ $H^*(i_x):H^*(E^I\times_g G)\twoheadrightarrow H^*(p_g^{-1}(x))$ is onto and if  $H^m(G)$ is a free ${\Bbbk}$-module then $\forall b\in G$ either $\tilde{H}^*(p^{-1}_\phi(p\circ g(b)))= \{0\}$ or the component of
$q^*(e_\nu)$ in $H^m(G_{[b]})$ the cohomology of the path-connected component of $b$ in $G$ is trivial.
\end{theor}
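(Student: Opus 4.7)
The plan is to reduce the theorem to Lemma~\ref{tncz implique classe d'euler nulle non connexe} applied to the evaluation fibration $ev_0 : \leftidx{^g}{E}{^g} \twoheadrightarrow G$ equipped with the constant-path section $\sigma : G \hookrightarrow \leftidx{^g}{E}{^g}$ (the map $b \mapsto (b, \text{constant path } g(b), b)$) and the cohomology class $e := q^*(e_\nu) \in H^m(G)$. The required annihilation
\[
\text{Ker } \sigma^* \;\cup\; ev_0^*\bigl(q^*(e_\nu)\bigr) \;=\; \{0\}
\]
is exactly Theorem~\ref{formule open string coproduct} part 4) (taking $f = g = h$), so this cohomological hypothesis is already in hand.

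First I would identify the fibres of $ev_0$ with the fibres of $p_g$. The homotopy fibre product $\leftidx{^g}{E}{^g}$ sits in a pullback square whose other sides are $p_g : E^I \times_g G \twoheadrightarrow E$ and $g : G \hookrightarrow E$, with $ev_0$ as the opposite vertical. Therefore the fibre of $ev_0$ at $b \in G$ is canonically $p_g^{-1}(g(b))$. Because $p : E \twoheadrightarrow B$ is a Serre fibration and the defining square of $g$ is a strict pullback, the natural map $p_g^{-1}(g(b)) \to p_\phi^{-1}(p \circ g(b))$ sending $(\omega, b') \mapsto (p \circ \omega, q(b'))$ is a weak equivalence, by the standard comparison of homotopy fibres across a homotopy-cartesian square.

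Next I would transfer the TNCZ property from $p_g$ to $ev_0$. The inclusion of the fibre $p_g^{-1}(g(b)) \hookrightarrow \leftidx{^g}{E}{^g}$ factors through $\leftidx{^g}{E}{^g} \to E^I \times_g G$ the inclusion of the same fibre into $E^I \times_g G$; hence if $H^*(E^I \times_g G) \twoheadrightarrow H^*(p_g^{-1}(g(b)))$ is surjective, so is $H^*(\leftidx{^g}{E}{^g}) \twoheadrightarrow H^*(p_g^{-1}(g(b)))$. Thus $ev_0$ is TNCZ at every point of $G$.

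Finally I would apply Lemma~\ref{tncz implique classe d'euler nulle non connexe} to $ev_0 : \leftidx{^g}{E}{^g} \twoheadrightarrow G$. The finiteness assumption on $H_n(G)$ (respectively on $H^q(p_\phi^{-1}(y))$) translates component-by-component using the decomposition $G = \coprod_{[b] \in \pi_0(G)} G_{[b]}$ and the weak equivalence of fibres established above; the same holds for the torsion-freeness hypothesis and for the freeness of $H^m(G) = \prod_{[b]} H^m(G_{[b]})$. The lemma then yields, for each $b \in G$, either $\tilde{H}^*(p_g^{-1}(g(b))) = \{0\}$ or the $G_{[b]}$-component of $q^*(e_\nu)$ in $H^m(G_{[b]})$ vanishes, which via the weak equivalence $p_g^{-1}(g(b)) \simeq p_\phi^{-1}(p \circ g(b))$ is exactly the desired conclusion. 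The main obstacle is performing the fibre identification and the TNCZ transfer cleanly; once in place, the remaining step is a direct bookkeeping of the finiteness hypotheses through the components of $G$.
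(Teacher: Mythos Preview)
Your proposal is correct and follows essentially the same route as the paper: identify the fibres of $ev_0$ with those of $p_g$ via the pullback square over $g$, pass to the fibres of $p_\phi$ via the homotopy-cartesian square coming from $p$, transfer the TNCZ property through the factorization $p_g^{-1}(g(b)) \hookrightarrow \leftidx{^g}{E}{^g} \to E^I \times_g G$, invoke Theorem~\ref{formule open string coproduct} part 4) for the annihilation, and finish with Lemma~\ref{tncz implique classe d'euler nulle non connexe}. The only difference is cosmetic ordering.
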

\begin{proof}
By definition of the homotopy fibre product $\leftidx{^f}{E}{^g}$, for any $a\in F$, we have the following commutative diagram
of spaces
$$
\xymatrix{
ev_0^{-1}(a)\ar[r]^-\cong\ar[d]
&p_g^{-1}(f(a))\ar[d]^{i_{f(a)}}\ar[r]^-\approx
&p_\phi^{-1}(p\circ f(a))\ar[d]\\
\leftidx{^f}{E}{^g}\ar[r]\ar[d]_{ev_0}
& E^I\times_g G\ar[d]_{p_g}\ar[r]^{p^I\times q}
& B^I\times_\phi M\ar[d]_{p_\phi}
& M\ar[ld]^{\phi}\ar[l]_-s^-\approx\\
F\ar[r]_f
&E\ar[r]_p
&B}
$$
where the bottom left square is a pull-back, $s$ is a homotopy equivalence and the bottom right square is a homotopy pull-back
(i. e. the induced map $p_g^{-1}(f(a))\rightarrow p_\phi^{-1}(p\circ f(a))$ between the homotopy fibre of $g$ and $\phi$ is a homotopy equivalence).
Since $\forall a\in F$, $H^*(i_{f(a)}):H^*(E^I\times_g G)\rightarrow H^*(p_g^{-1}(f(a)))$ is onto,
$H^*(\leftidx{^f}{E}{^g})\rightarrow H^*(ev_0^{-1}(a))$ is also onto, i. e. $ev_0$ is Totally Non-Cohomologous to Zero.

Suppose now that $f=g$ (and $F=G$). By part 4) of Theorem~\ref{formule open string coproduct},
$$
\text{Ker } \sigma^*\cup ev_0^*\circ q^*(e_{\nu})=\{0\}.
$$
By applying Lemma~\ref{tncz implique classe d'euler nulle non connexe} to the fibration
$ev_0:\leftidx{^g}{E}{^g}\twoheadrightarrow G$ we obtain that for all $b\in G$, the component of $q^*(e_\nu)$ in
$H^*(G_{[b]})$ is trivial or $\tilde{H}^*(ev_0^{-1}(b))= \{0\}$.
\end{proof}
\section{an example}
\begin{cor}\label{pull-back fibration et espace projectif}
Consider the following pull-back diagram
$$
\xymatrix{
G\ar[r]^g\ar[d]_q
&E\ar[d]^p\\
\mathbb{CP}^q\ar[r]_\phi
&\mathbb{CP}^n
}
$$
where $p:E\twoheadrightarrow \mathbb{CP}^n$ is a (Serre) fibration over the $n$-th complex projective space $ \mathbb{CP}^n$ and $\phi: \mathbb{CP}^q\hookrightarrow \mathbb{CP}^n$ is the inclusion, $0\leq q<n$.

If the fibration $p_g$ associated to $g$ is Totally Non-Cohomologous to Zero and if  $H^{2n-2q}(G)$ is a free ${\Bbbk}$-module then $q^*(a^{n-q})=0$. Here $a$ is a generator of $H^2(\mathbb{CP}^q)$.
\end{cor}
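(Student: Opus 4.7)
The plan is to specialize Theorem~\ref{pull-back tncz implique cohomologie fibre ou classe d'euler nulle} to the embedding $\phi:\mathbb{CP}^q\hookrightarrow\mathbb{CP}^n$. Its normal bundle $\nu$ is a complex vector bundle of complex rank $n-q$, hence canonically $h^*$-oriented of real codimension $m=2(n-q)$, and the given pull-back square exhibits $g:G\to E$ as the pull-back of an embedding in the sense of Definition~\ref{pull-back fibre ou transverse d'un embedding}. So only the identification of $e_\nu$ and the non-acyclicity of the homotopy fibres of $\phi$ need to be handled.

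First I would compute $e_\nu$. From the splitting $T\mathbb{CP}^n|_{\mathbb{CP}^q}\cong T\mathbb{CP}^q\oplus\nu$ and the standard formula $c(T\mathbb{CP}^n)=(1+a)^{n+1}$, the Whitney product formula yields
$$c(\nu)=(1+a)^{n-q}\quad\text{in }\;\Bbbk[a]/(a^{q+1}),$$
so that $e_\nu=c_{n-q}(\nu)=a^{n-q}$. Under this identification, the theorem's conclusion $q^*(e_\nu)=0$ becomes exactly the desired $q^*(a^{n-q})=0$.

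Next I would verify the remaining hypotheses by analysing the homotopy fibre $F_\phi\simeq p_\phi^{-1}(y)$ of $\phi$. Shifting the fibre sequence gives a fibration
$$\Omega\mathbb{CP}^n\longrightarrow F_\phi\longrightarrow \mathbb{CP}^q,$$
and a classical computation through the path-loop fibration of $\mathbb{CP}^n$ shows that $H^*(\Omega\mathbb{CP}^n;\Bbbk)$ is torsion-free, finitely generated in each degree, and of infinite total dimension. The Serre spectral sequence of the above fibration is horizontally bounded by $H^*(\mathbb{CP}^q)$ and therefore cannot annihilate such an infinite-dimensional fibre, which both forces $\tilde H^*(F_\phi;\Bbbk)\neq\{0\}$ and supplies the finite-generation and torsion-freeness required by Theorem~\ref{pull-back tncz implique cohomologie fibre ou classe d'euler nulle}.

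Applying Theorem~\ref{pull-back tncz implique cohomologie fibre ou classe d'euler nulle} then tells me that for every $b\in G$ either $\tilde H^*(p_\phi^{-1}(p\circ g(b));\Bbbk)=\{0\}$ — excluded by the previous paragraph — or the restriction of $q^*(a^{n-q})$ to $H^{2(n-q)}(G_{[b]})$ vanishes; gathering over all path-components of $G$ yields $q^*(a^{n-q})=0$ in $H^{2(n-q)}(G)$. The main technical obstacle I foresee is the torsion-freeness hypothesis on $H^*(F_\phi;\Bbbk)$, because subquotients of a torsion-free $E_2$-page may a priori acquire torsion; one should either exploit the multiplicative structure and divided-power decomposition of $H^*(\Omega\mathbb{CP}^n;\Bbbk)$ to control the abutment, or simply restrict to $\Bbbk$ a field, in which case torsion-freeness is automatic.
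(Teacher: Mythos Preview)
Your overall strategy coincides with the paper's: compute $e_\nu=a^{n-q}$ via Whitney sum and then invoke Theorem~\ref{pull-back tncz implique cohomologie fibre ou classe d'euler nulle}. The Euler class computation and the componentwise conclusion are handled exactly as in the paper.

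The only substantive divergence is in the analysis of the homotopy fibre $p_\phi^{-1}(*)$, and this is precisely where your self-diagnosed gap lies. You study $F_\phi$ through the fibration $\Omega\mathbb{CP}^n\to F_\phi\to\mathbb{CP}^q$ and argue by spectral sequence; this gives non-acyclicity and degreewise finite generation, but, as you note, does not directly yield torsion-freeness of $H^*(F_\phi;\Bbbk)$ over a general PID, which Theorem~\ref{pull-back tncz implique cohomologie fibre ou classe d'euler nulle} requires. The paper closes this gap cleanly by an explicit identification rather than a spectral sequence estimate: the pull-back square of $S^1$-principal bundles
\[
\xymatrix{
S^{2q+1}\ar[r]^{\tilde\phi}\ar[d] & S^{2n+1}\ar[d]\\
\mathbb{CP}^q\ar[r]_\phi & \mathbb{CP}^n
}
\]
shows that the homotopy fibre of $\phi$ is that of $\tilde\phi$; since $q<n$ gives $\pi_{2q+1}(S^{2n+1})=0$, the map $\tilde\phi$ is null-homotopic and its homotopy fibre is $S^{2q+1}\times\Omega S^{2n+1}$. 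This makes all three hypotheses on $H^*(p_\phi^{-1}(*))$ --- non-triviality, degreewise finite generation, and torsion-freeness over any PID --- immediate from K\"unneth, with no restriction to fields needed. Replacing your spectral sequence paragraph by this identification removes the obstacle you flagged and completes the proof.
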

\begin{proof}
By~\cite[Theorem 14.10]{Milnor-Stasheff}, $c(T\mathbb{CP}^n)$, the total Chern class of the tangent
bundle of $\mathbb{CP}^n$ is equal to $(1+a)^{n+1}$ in $H^{\Pi}(\mathbb{CP}^n)$.
Since $T\mathbb{CP}^q\oplus \nu=T\mathbb{CP}^n_{|\mathbb{CP}^q}$, in $H^{\Pi}(\mathbb{CP}^q)$,
$$
c(\nu)=c(T\mathbb{CP}^n_{|\mathbb{CP}^q})/ c(T\mathbb{CP}^q)=(1+a)^{n+1}/(1+a)^{q+1}=(1+a)^{n-q}.
$$
Therefore $e(\nu)=c_{n-q}(\nu)=a^{n-q}$.

We have a morphism of $S^1$-principal fibre bundles
$$
\xymatrix{
S^{2q+1}\ar[d]\ar[r]^{\tilde{\phi}}
& S^{2n+1}\ar[d]\\
\mathbb{CP}^q\ar[r]^{\phi}
&\mathbb{CP}^n
}
$$
where $\tilde{\phi}:S^{2q+1}\hookrightarrow S^{2n+1}$ is the inclusion.
Since this square is a pull-back, the homotopy fibre of $\phi$, $p_{\phi}^{-1}(*)$, is homotopy equivalent to the homotopy fibre of $\tilde{\phi}$.
Since $\pi_{2q+1}(S^{2n+1})=\{0\}$, $\tilde{\phi}$ is homotopically trivial and its homotopy fibre
is homotopy equivalent to $S^{2q+1}\times\Omega S^{2n+1}$.
Therefore by Theorem~\ref{pull-back tncz implique cohomologie fibre ou classe d'euler nulle},
all the components of $q^*(e_\nu)$ are trivial. 
\end{proof}
\begin{cor}\label{pull back d'inclusions espaces projectif}
Let $f: \mathbb{CP}^p\hookrightarrow \mathbb{CP}^n$ 
and $g: \mathbb{CP}^q\hookrightarrow \mathbb{CP}^n$ be the inclusions, $0\leq p<n$, $0\leq q<n$.
If the fibration $ev_0:\leftidx{^f}{E}{^g}\twoheadrightarrow \mathbb{CP}^p$ is Totally Non-Cohomologous to Zero then $q\geq p$ and $n>p+q$.
\end{cor}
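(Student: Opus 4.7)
The plan is to realise the fibration $ev_0:\leftidx{^f}{E}{^g}\twoheadrightarrow\mathbb{CP}^p$ as (equivalent to) the pullback-of-an-embedding setup of Definition~\ref{pull-back fibre ou transverse d'un embedding}, invoke Corollary~\ref{pull-back fibration et espace projectif}, and then read off both inequalities from the truncation relations in $H^*(\mathbb{CP}^p)$. Let $E':=\{(a_0,\omega)\in\mathbb{CP}^p\times(\mathbb{CP}^n)^I:\omega(0)=f(a_0)\}$ be the mapping path space of $f$, with evaluation fibration $\pi:E'\twoheadrightarrow\mathbb{CP}^n$, $(a_0,\omega)\mapsto\omega(1)$. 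The first projection $E'\rightarrow\mathbb{CP}^p$ is a homotopy equivalence (its fibres are contractible path spaces), and $\leftidx{^f}{E}{^g}$ is literally the pullback of $g:\mathbb{CP}^q\hookrightarrow\mathbb{CP}^n$ along $\pi$, with second projection $q'=ev_1:\leftidx{^f}{E}{^g}\rightarrow\mathbb{CP}^q$. Under $E'\simeq\mathbb{CP}^p$, the Serre fibration $p_{\tilde g}$ associated to the pulled-back map $\tilde g:\leftidx{^f}{E}{^g}\rightarrow E'$ is fibrewise equivalent over $E'$ to $ev_0$: both are fibrations with homotopy fibre $S^{2q+1}\times\Omega S^{2n+1}$, and the canonical inclusion $\leftidx{^f}{E}{^g}\hookrightarrow(E')^I\times_{\tilde g}\leftidx{^f}{E}{^g}$, $b'\mapsto(c_{\tilde g(b')},b')$, is a homotopy equivalence of spaces over $E'$. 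Thus the TNCZ hypothesis for $ev_0$ coincides with the TNCZ hypothesis of Corollary~\ref{pull-back fibration et espace projectif}.

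Leray--Hirsch applied to $ev_0$, whose fibre has free ${\Bbbk}$-cohomology of finite type in each degree, then implies that $H^*(\leftidx{^f}{E}{^g})$ is a free $H^*(\mathbb{CP}^p)$-module; in particular $(ev_0)^*:H^*(\mathbb{CP}^p)\rightarrow H^*(\leftidx{^f}{E}{^g})$ is injective and $H^{2(n-q)}(\leftidx{^f}{E}{^g})$ is ${\Bbbk}$-free. Corollary~\ref{pull-back fibration et espace projectif} now delivers $(q')^*(a_q^{n-q})=0$ in $H^{2(n-q)}(\leftidx{^f}{E}{^g})$, where $a_q\in H^2(\mathbb{CP}^q)$ is the generator. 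The path $\omega$ threading a point $(a_0,\omega,b)\in\leftidx{^f}{E}{^g}$ gives a homotopy between $g\circ q'$ and $f\circ ev_0$ as maps to $\mathbb{CP}^n$, so $(q')^*(a_q)=(ev_0)^*(a_p)$ where $a_p\in H^2(\mathbb{CP}^p)$ is the generator. Substituting and using injectivity of $(ev_0)^*$ gives $a_p^{n-q}=0$ in $H^*(\mathbb{CP}^p)={\Bbbk}[a_p]/(a_p^{p+1})$, i.e.\ $n>p+q$. The same mechanism forces $q\geq p$: since $a_q^{q+1}=0$ in $H^*(\mathbb{CP}^q)$ we have $(q')^*(a_q^{q+1})=0=(ev_0)^*(a_p^{q+1})$, so $a_p^{q+1}=0$ in $H^*(\mathbb{CP}^p)$, i.e.\ $q+1>p$.

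The main point that requires care is the identification in the first paragraph: $\tilde g$ is the pullback of the non-fibration $g$, so need not itself be a fibration, but its Serre-fibration replacement $p_{\tilde g}$ is naturally fibrewise equivalent over $E'\simeq\mathbb{CP}^p$ to $ev_0$, which is what permits the invocation of Corollary~\ref{pull-back fibration et espace projectif}. Once this verification is recorded, both inequalities drop out from Corollary~\ref{pull-back fibration et espace projectif} combined with polynomial truncation in $H^*(\mathbb{CP}^p)$.
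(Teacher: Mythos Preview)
Your proof is correct and runs close to the paper's, but with a genuinely cleaner organisation. Both arguments hinge on Corollary~\ref{pull-back fibration et espace projectif} applied to the pullback of $g:\mathbb{CP}^q\hookrightarrow\mathbb{CP}^n$ along the path-fibration replacement of $f$, and both exploit the identity $ev_1^*(a_q)=ev_0^*(a_p)$ coming from the tautological homotopy $g\circ ev_1\simeq f\circ ev_0$. The difference lies in how the freeness hypothesis on $H^{2(n-q)}(\leftidx{^f}{E}{^g})$ required by Corollary~\ref{pull-back fibration et espace projectif} is obtained, and in how the two inequalities are extracted.

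The paper first proves $q\geq p$ separately (by showing $ev_0^*$ fails to be injective when $q<p$), then argues by contradiction: assuming $n-q\leq p$, it runs the Serre spectral sequence of the \emph{other} projection $ev_1:\leftidx{^f}{E}{^g}\twoheadrightarrow\mathbb{CP}^q$, whose fibre $S^{2p+1}\times\Omega S^{2n+1}$ has reduced cohomology concentrated in degrees $\geq 2p+1$, to see that $ev_1^*$ is an isomorphism in degree $2(n-q)\leq 2p$; this gives freeness, Corollary~\ref{pull-back fibration et espace projectif} then yields $a^{n-q}=0$ in $H^*(\mathbb{CP}^q)$, hence $n-q>q\geq p$, contradicting the assumption.

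You instead invoke Leray--Hirsch for $ev_0$ itself (available precisely because $ev_0$ is TNCZ and its fibre $S^{2q+1}\times\Omega S^{2n+1}$ has free finite-type cohomology), obtaining freeness of $H^*(\leftidx{^f}{E}{^g})$ in \emph{all} degrees and injectivity of $ev_0^*$ in one stroke. This lets you apply Corollary~\ref{pull-back fibration et espace projectif} with no case analysis and read off both $a_p^{n-q}=0$ (giving $n>p+q$) and $a_p^{q+1}=0$ (giving $q\geq p$) from the single relation $ev_1^*(a_q)=ev_0^*(a_p)$. Your route avoids the auxiliary spectral-sequence computation for $ev_1$ and the contradiction argument, at the cost of spelling out the identification of $p_{\tilde g}$ with $ev_0$ --- which you correctly flag and handle.
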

\begin{proof}
Suppose that $q<p$. Since $ev_0^*\circ f^*=ev_1^*\circ g^*$ and $f^*$ is surjective, for all 
$q<i\leq p$, $$ev_0^*(a^i)=ev_0^*\circ f^*(a^i)=ev_1^*\circ g^*(a^i)=ev_1^*(a^i)=0.$$
Therefore $ev_0^*$ is not injective and so by~\cite[III.Theorem 4.4]{Mimura-Toda:topliegroups},
$ev_0$ is not Totally Non-Cohomologous to Zero.

Consider the Serre spectral sequence associated to the fibration
$F\hookrightarrow \leftidx{^f}{E}{^g}\buildrel{ev_1}\over\twoheadrightarrow \mathbb{CP}^q$.
We saw in the proof of Corollary~\ref{pull-back fibration et espace projectif} that its fibre
$F$ is homotopy equivalent to $S^{2p+1}\times \Omega S^{2n+1}$. Therefore $H^+(F)$ is concentrated
in degree $\geq 2p+1$. And so $E_r^{s,t}\neq \{0\}\Rightarrow t=0$ or $t\geq 2p+1$.
Therefore $ev_1^*$ is an isomorphism in degree $\leq 2p$.
(In particular, if $p\geq q$, $ev_1^*$ is injective).

Suppose now that $ev_0$ is Totally Non-Cohomologous to Zero and that $n-q\leq p$.
Then $H^{2n-2q}(ev_1):H^{2n-2q}(\mathbb{CP}^q)\buildrel{\cong}\over\rightarrow H^{2n-2q}(\leftidx{^f}{E}{^g})$ is an isomorphism. Since $H^{2n-2q}(\mathbb{CP}^q)$ is $\Bbbk$-free,
by Corollary~\ref{pull-back fibration et espace projectif}, $H^{2n-2q}(ev_1)(a^{n-q})=0$.
And so $a^{n-q}=0$ in $H^*(\mathbb{CP}^q)$. Therefore $n-q>q$. In particular, $p>q$.
\end{proof}
\begin{rem}
In the case $p=q$ of Corollary~\ref{pull back d'inclusions espaces projectif}, parts 4) and 8) of
Theorem~\ref{formule open string coproduct} give that
$$
\text{Ker } \sigma^*\cup ev_0^*(a^{n-q})=\{0\}
$$
and that for all $b\in H^*(\leftidx{^g}{E}{^g})$ of degree $>4q-2n$, 
$
b\cup ev_0^*(a^{n-q})=0.
$
\end{rem}
\begin{rem} (Over $\mathbb{Q}$, the converse of Corollary~\ref{pull back d'inclusions espaces projectif} is true)
Over $\mathbb{Q}$, a relative Sullivan model of $ev_0$ is given by the inclusion of differential graded algebras
$$
(\Lambda (x_2,z_{2p+1}),d)\hookrightarrow (\Lambda (x_2,z_{2p+1},t_{2q+1},sy_{2n+1}),d)
$$
with $d(z_{2p+1})=x_2^{p+1}$, $d(t_{2q+1})=x_2^{q+1}$ and  $d(sy_{2n+1})=t_{2q+1}x_2^{n-q}-z_{2p+1}x_2^{n-p}$(Compare with~\cite[Example 7.3]{MenichiL:cohaf}).

If $n>p+q$, by replacing $sy$ by $sy-ztx^{n-p-q-1}$, we can assume that $d(sy)=0$.
If $q\geq p$, by replacing $t$ by $t-zx^{q-p}$, we can assume that $d(t)=0$.
Therefore if $n>p+q$ and $q\geq p$ then over $\mathbb{Q}$, $ev_0$ is Totally Non-Cohomologous to Zero.
\end{rem}
\part{the free loops case}
In this part, we consider our main example of homotopy fibre product, the space $LM$ of free loops on a manifold.
\section{The loop product and the loop coproduct}
Let $M$ be a smooth oriented manifold without boundary.
In this section, $M$ is not necessarily compact.
The diagonal map $\Delta:M\hookrightarrow M\times M$ is an embedding.
Since $M$ is Hausdorff, $\Delta(M)$ is a closed subset of
$M\times M$.
As we have explained
in Section~\ref{shriek d'un embedding}, we can define the shriek map
of $\Delta$, $\Delta_!$ in homology.

By definition, the {\it intersection product} in homology, is the composite
$$
H_*(M)\otimes H_*(M)\buildrel{\times}\over\rightarrow H_*(M\times M)
\buildrel{\Delta_!}\over\rightarrow  H_{*-m}(M).
$$

We have the following push-out squares
$$
\xymatrix{
S^1\coprod S^1\ar[r]
&S^1\vee S^1
&S^1\ar[l]_{c}\\
\star\coprod\star\ar[u]\ar[r]
&\star\ar[u]
&S^0\ar[u]\ar[l]
}
$$
where $c:S^1\rightarrow S^1\vee S^1$ is the comultiplication or pinch map of $S^1$.
Note that all the vertical maps are cofibrations.
Since the functor $map(-,M)$ transforms push-out squares in pull-back squares,
we have the following pull-back squares where all the vertical maps are fibrations
$$
\xymatrix{
LM\times LM\ar[d]_{ev\times ev}
&LM\times_M LM\ar[d]_q\ar[l]_{\tilde{\Delta}}\ar[r]^\mu
&LM\ar[d]^{(ev,ev_{1/2})}\\
M\times M
&M\ar[r]_\Delta\ar[l]^\Delta
&M\times M
}
$$
and $\mu:=map(c,M)$ is the composition or multiplication of loops.
Since $\Delta:M\hookrightarrow M\times M$ is an embedding, as we have explained
in Section~\ref{shriek d'un pull-back d'un embedding}, we can define the shriek map
of $\tilde{\Delta}$, $\tilde{\Delta}_!$ in homology, and the shriek maps
of $\mu$, $\mu_!$ in homology, $\mu^!$ in cohomology.

By definition, the Chas-Sullivan loop product in homology, is the composite
$$
H_*(LM)\otimes H_*(LM)\buildrel{\times}\over\rightarrow H_*(LM\times LM)
\buildrel{\tilde{\Delta}_!}\over\rightarrow H_{*-m}(LM\times_M LM)
\buildrel{\mu_*}\over\rightarrow H_{*-m}(LM).
$$
By definition, the loop coproduct in homology is the composite
$$
H_*(LM)\buildrel{\mu_!}\over\rightarrow H_{*-m}(LM\times_M LM)
\buildrel{\tilde{\Delta}_*}\over\rightarrow H_{*-m}(LM\times LM).
$$
In this note, we work over an arbitrary principal ideal domain $\Bbbk$
and so the cross product is not in general an isomorphism.
Therefore, we will consider the loop coproduct in cohomology.
By definition, the loop coproduct in cohomology is the product defined
by the composite
$$
H^*(LM)\otimes H^*(LM)\buildrel{\times}\over\rightarrow H^*(LM\times LM)
\buildrel{\tilde{\Delta}^*}\over\rightarrow H^{*}(LM\times_M LM)
\buildrel{\mu^!}\over\rightarrow H^{*+m}(LM)
$$

\begin{rem}
Let $k:\Omega M\hookrightarrow LM$ be the inclusion of the pointed loops into the free loops.
If the dimension of $M$ is positive, from Corollary~\ref{shriek et inclusion fibre nulle},
we obtain that the composite
$$
H_*(\Omega M)\otimes H_*(\Omega M)
\buildrel{k_*\otimes k_*}\over\rightarrow
H_*(LM)\otimes H_*(LM)
\buildrel{\text{loop product}}\over\rightarrow
H_{*-m}(LM)
$$
is trivial.
\end{rem}
\section{A simple formula for the loop coproduct}
Denote by $LM_{[1]}$ the path-connected component of $LM$ of freely contractile loops.
Recall that $ev:LM\twoheadrightarrow M$ is the evaluation map.
Let $\sigma:M\hookrightarrow LM$, $m\mapsto \text{constant loop }m$, be its trivial section.

\begin{theor}\label{formule loop coproduct}
Let $M$ be a connected, closed  $\Bbbk$-oriented manifold of dimension $m$.
Let $\omega\in H^m(M)$ be its orientation class.
Let $\chi(M)$ be its Euler characteristic.
Then 

\noindent 1) The loop coproduct, $\mu^!\circ\tilde{\Delta}^*$ on $H^*(LM)$ is
given for $a$, $b\in H^*(LM)$, by
\begin{equation*}
\mu^!\circ\tilde{\Delta}^*(a\otimes b)
=\chi(M) a\cup ev^*(\sigma^*(b)\cup \omega).
\end{equation*}
Here $\cup$ is the cup product on $H^*(LM)$.

\noindent 2) The loop coproduct, $\mu^!\circ\tilde{\Delta}^*$ on $H^*(LM)$ is graded commutative
with respect to the usual degrees:
that is, for $a\in H^p(LM)$, $b\in H^q(LM)$

$\mu^!\circ\tilde{\Delta}^*(a\otimes b)
=(-1)^{pq}\mu^!\circ\tilde{\Delta}^*(b\otimes a).
$

\noindent 3) The ideal $\text{Ker } \sigma^*:H^*(LM)\twoheadrightarrow H^*(M)$ satisfies
$$
\text{Ker } \sigma^*\cup   \chi(M) ev^*(\omega)=\{0\}.
$$

\noindent 4) The loop coproduct, $\mu^!\circ\tilde{\Delta}^*$ on $H^*(LM)$ is
given for $a$, $b\in H^*(LM)$, by
\begin{equation*}
\mu^!\circ\tilde{\Delta}^*(a\otimes b)
=\chi(M) ev^*(\sigma^*(a)\cup \sigma^*(b)\cup \omega).
\end{equation*}

\noindent 5) the loop coproduct, $\mu^!\circ\tilde{\Delta}^*$ on $H^*(LM)$ is
trivial outside of $H^0(LM_{[1]})\otimes H^0(LM_{[1]})\cong \Bbbk\otimes \Bbbk$.

\noindent 6) On $H^0(LM_{[1]})\otimes H^0(LM_{[1]})$, the loop coproduct is given by
$$\mu^!\circ\tilde{\Delta}^*(1\otimes 1)=\chi(M)ev^*(\omega).$$

\noindent 7) The image of the loop coproduct  $\mu^!\circ\tilde{\Delta}^*$ is contained
in $H^*(LM_{[1]})$.
\end{theor}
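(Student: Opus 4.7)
The plan is to deduce Theorem~\ref{formule loop coproduct} as the specialization of Theorem~\ref{formule open string coproduct} in which $f=g=h=\Delta\colon M\hookrightarrow M\times M$ is the diagonal, so that $F=G=H=M$, $E=B=M\times M$, and $q=\mathrm{id}_M$. Two preliminary identifications do all the work.

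First, I would identify $LM$ with the homotopy fibre product $\leftidx{^\Delta}{(M\times M)}{^\Delta}$: a point of the latter consists of points $a,b\in M$ together with a pair of paths $(\omega_1,\omega_2)$ in $M$ sharing endpoints $a$ and $b$, and concatenating $\omega_1$ with the reverse of $\omega_2$ yields a weak homotopy equivalence with $LM$. Through this identification the projections $ev_0$ and $ev_1$ of Theorem~\ref{formule open string coproduct} become evaluation at $0$ and at $1/2$; both are homotopic to the standard evaluation $ev\colon LM\twoheadrightarrow M$ through the family $t\mapsto ev_t$, so they induce the same map $ev^*$ in cohomology. The section $\sigma$ becomes the inclusion of constant loops, and $lcp_{g,g,g}$ becomes the loop coproduct $\mu^!\circ\tilde\Delta^*$ of Section~9.

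Second, I would compute the Euler class of the normal bundle of $\Delta$. Since $T(M\times M)_{|\Delta(M)}\cong TM\oplus TM$, the normal bundle is $TM$; by the classical identity $\langle e(TM),[M]\rangle=\chi(M)$ together with Poincar\'e duality (see~\cite{Milnor-Stasheff}), one obtains $e_\nu=\chi(M)\,\omega$ in $H^m(M)$. Consequently $q^*(e_\nu)=\chi(M)\,\omega$ and $ev^*\circ q^*(e_\nu)=\chi(M)\,ev^*(\omega)$.

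With these two inputs in hand, the seven parts follow by direct substitution into Theorem~\ref{formule open string coproduct}: part 1) is its part 1), part 3) is its part 4), part 4) is its part 5), and part 6) is part 4) of the present theorem evaluated at $a=b=1$ using $\sigma^*(1)=1$. For part 5), combine its parts 6) and 8): connectedness of $M$ makes $\mathrm{Im}\,\pi_0(\sigma)$ the single path-component $LM_{[1]}$ of freely contractible loops, and $\dim G-m=0$ forces the coproduct to vanish as soon as $|a|+|b|>0$. Part 7) is the third assertion of its part 7), and part 2) follows from part 4) of the present theorem together with the graded commutativity of the cup product on $H^*(M)$. I expect the main obstacle to be writing down cleanly the dictionary between $LM$ with its Chas-Sullivan comultiplication $\mu$ and the homotopy fibre product $\leftidx{^\Delta}{(M\times M)}{^\Delta}$ with its $\mu_{g,g,g}$; once that dictionary and the formula $e_\nu=\chi(M)\,\omega$ are in place, the remainder is pure bookkeeping.
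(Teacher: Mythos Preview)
Your proposal is correct and follows essentially the same route as the paper: specialize Theorem~\ref{formule open string coproduct} to $f=g=h=\Delta$, identify $\leftidx{^\Delta}{(M\times M)}{^\Delta}$ with $LM$ via $(\omega_1,\omega_2)\mapsto\omega_1*\omega_2^{-1}$, and use $e_\nu=e_{TM}=\chi(M)\omega$. The paper packages the ``dictionary'' step you flag as the main obstacle into a separate Lemma~\ref{open string coproduct redonne loop coproduct}, where it checks (using loop rotations $\rho_\alpha$) that the pull-back squares defining the open string coproduct match those defining the loop coproduct; your sketch via $ev_t$ is the same idea, and in fact the identification is a homeomorphism rather than merely a weak equivalence.
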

\begin{rem}
Over a field, parts 2), 5) and 7) of this Theorem are not new.
Indeed over a field, the commutativity of the loop coproduct
was proved by Cohen and Godin~\cite{1095.55006} and parts 5) and 7) are the
duals of ~\cite[Theorem B (2)]{tamanoi-2007}.
\end{rem}
\begin{lem}\label{open string coproduct redonne loop coproduct} 
Consider $\leftidx{^\Delta}{M}{^\Delta}$, the self homotopy fibre product along the diagonal. 
Explicitly $\leftidx{^\Delta}{M}{^\Delta}$ is just the subspace
 $$\{(\omega,\omega')\in M^I\times M^I/ \omega(0)=\omega'(0),\;\omega(1)=\omega'(1)\}.$$
 Let $\Theta:\leftidx{^\Delta}{M}{^\Delta}\buildrel{\cong}\over\rightarrow LM$ be the homeomorphism
 mapping $(\omega,\omega')$ to the free loop $\omega *\omega'^{-1}$ obtained by composing the path
 $\omega$ with the inverse of the path $\omega'$. Then
 
 1) ~\cite[Example iii) free loop space]{Sullivan:openclosedstring} With respect to the loop product
 and the open string product,
 $$H_*(\Theta):H_*(\leftidx{^\Delta}{M}{^\Delta})\buildrel{\cong}\over\rightarrow H_*(LM)$$
 is an isomorphism of algebras.
 
 2) With respect to the loop coproduct
 and the open string coproduct,
 $$H^*(\Theta):H^*(LM)\buildrel{\cong}\over\rightarrow H^*(\leftidx{^\Delta}{M}{^\Delta})$$
 is an isomorphism of algebras.
\end{lem}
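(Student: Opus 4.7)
The plan is to exploit the standard pushout presentation $S^1 = I \cup_{\partial I} I$, under which $\Theta:\leftidx{^\Delta}{M}{^\Delta}\to LM$ is the canonical homeomorphism arising from applying $\operatorname{map}(-,M)$ to the pushout (turning it into a pullback). The basic compatibilities follow immediately from the explicit formula $\Theta(\omega,\omega')=\omega*\omega'^{-1}$: the evaluation $ev_0$ on $\leftidx{^\Delta}{M}{^\Delta}$ corresponds to $ev$ on $LM$, $ev_1$ corresponds to $ev_{1/2}$, the constant-pair section $\sigma_\Delta:M\to\leftidx{^\Delta}{M}{^\Delta}$ corresponds to the constant-loop section $\sigma:M\to LM$, and the normal bundle of $\Delta:M\hookrightarrow M\times M$ is $TM$, whose Euler class satisfies $e(TM)=\chi(M)\omega$.

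Both the loop (co)product on $LM$ and the open string (co)product on $\leftidx{^\Delta}{M}{^\Delta}$ are composites of three ingredients: a cross product, a shriek map for the diagonal $\Delta:M\hookrightarrow M\times M$ pulled back along a product of evaluations, and a concatenation/pushforward. I would identify these three ingredients on both sides. The open string side uses pullbacks of $\Delta$ along $ev_1\times ev_0$ (for the product) and along $(ev_1,ev_0)$ (for the coproduct via $\mu^!$); under $\Theta$ these transport to $ev_{1/2}\times ev$ and $(ev_{1/2},ev)$ on $LM$. The loop side uses $ev\times ev$ and $(ev,ev_{1/2})$ respectively. These differ only by a rotation of the $S^1$-parameter, namely $R_{1/2}:LM\to LM$, $\gamma\mapsto\gamma(\cdot+1/2)$, satisfying $ev\circ R_{1/2}=ev_{1/2}$. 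Because the $S^1$-action on $LM$ is continuous and $S^1$ is path-connected, $R_{1/2}$ is homotopic to $\operatorname{id}_{LM}$.

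By the naturality of the shriek map under pullbacks (equation~\eqref{naturalite par rapport aux produits fibres}) applied to the rotation-induced homeomorphism between the two pullback spaces, the two shriek maps correspond under $\Theta$ in (co)homology. Analogously, the two concatenation maps, i.e.\ the loop concatenation $\mu:LM\times_M LM\to LM$ at the basepoint and the pair-of-paths concatenation $(\omega_1,\omega_2)\cdot(\omega_1',\omega_2')=(\omega_1*\omega_1',\omega_2*\omega_2')$ on $\leftidx{^\Delta}{M}{^\Delta}\times_M\leftidx{^\Delta}{M}{^\Delta}$, differ under $\Theta$ by the same rotation $R_{1/2}$ (the latter inserts the second loop at the midpoint of the first rather than concatenating at the basepoint). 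As rotation is trivial in (co)homology, these agree. Combining the two identifications gives both parts of the lemma.

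The main obstacle is verifying rigorously that a self-homotopy-equivalence of the base of a pullback yields the same shriek map, since the construction in Section~\ref{shriek d'un pull-back d'un embedding} depends on specific Thom-class data in a tubular neighborhood, not only on the homotopy class of the pullback square. The cleanest resolution is to phrase both the loop (co)product and the open string (co)product as arising from $\operatorname{map}(-,M)$ applied to a common pushout of intervals (the same pushout that produces $\Theta$ itself), so that the intertwining by $\Theta$ reduces to a tautological relabeling of pushout data, sidestepping any rotation argument.
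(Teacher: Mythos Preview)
Your approach is essentially the paper's own: both transport the open-string pullback squares for $\Delta$ to the loop-space pullback squares via $\Theta$ composed with a loop rotation $\rho_\alpha$, and then invoke $\rho_\alpha\simeq\mathrm{id}_{LM}$ to conclude equality in (co)homology. The paper in fact uses two different rotations ($\rho_{1/2}$ on the left column and $\rho_{1/4}$ on the right column) rather than a single $R_{1/2}$, but this is only a bookkeeping difference.

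Your final paragraph, however, manufactures an obstacle that is not there. The rotation $\rho_\alpha$ is a \emph{homeomorphism} of $LM$, not merely a homotopy equivalence, and it acts on the \emph{total space} of the fibration $LM\twoheadrightarrow M\times M$, not on its base. The base embedding $\Delta:M\hookrightarrow M\times M$ and its tubular neighborhood are literally unchanged; one is simply comparing the shriek maps for the two fibrations $(ev,ev_{1/2})$ and $(ev,ev_{1/2})\circ\rho_\alpha$. Since the construction of Section~\ref{shriek d'un pull-back d'un embedding} proceeds by pulling the fixed Thom class $u$ back along the fibration, precomposing with the homeomorphism $\rho_\alpha$ transports the entire construction verbatim: the pulled-back Thom class, the preimage tubular neighborhood, and the excisive triad all correspond exactly under $\rho_\alpha$. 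Hence the two shriek maps are intertwined by $\rho_\alpha^*$ on the nose, and $\rho_\alpha^*=\mathrm{id}$ in (co)homology finishes the argument. No detour through a ``common pushout of intervals'' is required; the rotation argument you outlined first is already rigorous.
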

\begin{proof}
Denote by $\rho_\alpha:LM\buildrel{\cong}\over\rightarrow LM$ the homeomorphism mapping a free loop $l$
to the rotated free loop $t\mapsto l(t+\alpha)$. 
Up to the homeomorphism $\Theta$, the two pull-back squares defining the open string (co)product on
$$
\xymatrix{
\leftidx{^\Delta}{M}{^\Delta}\times\leftidx{^\Delta}{M}{^\Delta}\ar[d]_{ev_1\times ev_0}
&\leftidx{^\Delta}{M}{^\Delta}\times_{M\times M} \leftidx{^\Delta}{M}{^\Delta}\ar[l]_-{\tilde{\Delta}}\ar[r]^-{\mu_{\Delta,\Delta,\Delta}}\ar[d]_{ev_{1/2}}
& \leftidx{^\Delta}{M}{^\Delta}\ar[d]^{ev_{1/2}}\\
M\times M
& M\ar[l]^\Delta\ar[r]_{\Delta}
&M\times M
}
$$
coincide with the following two vertical rectangles defining the loop (co)product since $\rho_\alpha$ is homotopic to the identity map.
$$
\xymatrix{
LM\times LM\ar[d]_{\rho_{1/2}\times id}^\cong
&LM_{1/2}\times_M LM_0\ar[d]_\cong\ar[l]\ar[r]
&LM\ar[d]^{\rho_{1/4}}_\cong\\
LM\times LM\ar[d]_{ev\times ev}
&LM\times_M LM\ar[d]_q\ar[l]_{\tilde{\Delta}}\ar[r]^\mu
&LM\ar[d]^{(ev,ev_{1/2})}\\
M\times M
&M\ar[r]_\Delta\ar[l]^\Delta
&M\times M
}
$$
\end{proof}
\begin{proof}[Proof of Theorem~\ref{formule loop coproduct}]
We apply Theorem~\ref{formule open string coproduct} in the case where $f=g=h=\phi$ is the diagonal embedding
$\Delta:M\hookrightarrow M\times M$.

The normal bundle $\nu$ of $\Delta$ is isomorphic
to the tangent bundle of $M$, $TM$~\cite[Lemma 11.5]{Milnor-Stasheff}.
Since $M$ is compact and connected,  the Euler class of the tangent bundle is the fundamental
class multiplied by the Euler characteristics~\cite[Corollary 11.12]{Milnor-Stasheff}:
$$
e_\nu=e_{TM}=\chi(M)\omega.
$$
Using part 2) of Lemma~\ref{open string coproduct redonne loop coproduct}, we have proved Theorem~\ref{formule loop coproduct}.
\end{proof}
\begin{rem}\label{loop coproduit noncompact}
Let $M$ be a connected, non-compact  $\Bbbk$-oriented manifold of dimension $m$
and suppose that  $\Bbbk$ is a field.
Then its loop coproduct 
$\mu^!\circ\tilde{\Delta}^*$ on $H^*(LM)$ is trivial.
\end{rem}
\begin{proof}[Proof of Remark~\ref{loop coproduit noncompact}]
Since $M$ is non-compact then $H_m(M)=0$.
Since $\Bbbk$ is a field, $H^m(M)=\text {Hom}_\Bbbk(H_m(M),{\Bbbk})=0$.
So $e_{TM}$ is trivial.
Therefore the same proof as the proof of Theorem~\ref{formule loop coproduct}
shows that the loop coproduct is trivial.

Alternatively, for any $x\in H^*(LM\times_M LM)$
$$
\mu^*\circ\mu^!(x)=x\cup q^*(e_{TM})=0.
$$
Since  the composition of loops $\mu$ admits a section,
$\mu^*$ is injective and so $\mu^!$ is null.
\end{proof}
\begin{cor} 
The loop coproduct is trivial if and only if $\chi(M)=0$ in ${\Bbbk}$.
\end{cor}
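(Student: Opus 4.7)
The plan is to deduce both directions directly from Theorem~\ref{formule loop coproduct}, since that theorem already computes the loop coproduct completely in terms of $\chi(M)$.

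For the easy direction, suppose $\chi(M)=0$ in $\Bbbk$. By part 5) of Theorem~\ref{formule loop coproduct}, the loop coproduct vanishes outside of $H^0(LM_{[1]})\otimes H^0(LM_{[1]})\cong \Bbbk\otimes \Bbbk$, so it is entirely determined by its value on $1\otimes 1$. By part 6), this value is $\chi(M)ev^*(\omega)$, which is zero under our assumption. Hence the loop coproduct is identically zero.

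For the converse, assume the loop coproduct is trivial; in particular $\mu^!\circ\tilde{\Delta}^*(1\otimes 1)=0$, so by part 6) we have $\chi(M)ev^*(\omega)=0$ in $H^m(LM)$. The main point is then that $ev^*:H^*(M)\rightarrow H^*(LM)$ is injective: indeed $\sigma:M\hookrightarrow LM$ sending $m$ to the constant loop at $m$ satisfies $ev\circ\sigma=id_M$, hence $\sigma^*\circ ev^*=id$ on $H^*(M)$. Therefore $\chi(M)\omega=0$ in $H^m(M)$. Since $M$ is connected, closed and $\Bbbk$-oriented of dimension $m$, we have $H^m(M)\cong\Bbbk$ generated by $\omega$ (this is where we use the orientation and closedness hypotheses already present in Theorem~\ref{formule loop coproduct}), and hence $\chi(M)=0$ in $\Bbbk$.

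There is no real obstacle here; the corollary is essentially a restatement of parts 5) and 6) of Theorem~\ref{formule loop coproduct} together with the elementary observation that $ev^*$ is split injective via the constant-loop section $\sigma$. The only small subtlety is to notice that the vanishing of the loop coproduct on the single class $1\otimes 1$ is already enough to force $\chi(M)=0$, so one does not need to inspect the loop coproduct on any higher-degree classes.
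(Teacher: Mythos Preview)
Your proof is correct and follows essentially the same route as the paper: both use parts 5) and 6) of Theorem~\ref{formule loop coproduct} to reduce the question to whether $\chi(M)ev^*(\omega)=0$, and then invoke the split injectivity of $ev^*$ (via the constant-loop section $\sigma$) together with the fact that $\omega$ generates $H^m(M)\cong\Bbbk$. The paper's version is simply more terse, recording the chain of equivalences $\chi(M)ev^*(\omega)=0 \Leftrightarrow \chi(M)\omega=0 \Leftrightarrow \chi(M)=0$ in $\Bbbk$ without spelling out the two directions separately.
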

This corollary follows also from~\cite[(3-1) and (3-2)]{tamanoi-2007}
(Compare also with~\cite[Corollary 3.2]{tamanoi-2007} or~\cite[Bottom p. 7]{Sullivan:openclosedstring}).
In~\cite{Chataur-Thomas:Frobratloopalg}, Chataur and Thomas gave the first example of manifold with non-trivial
loop coproduct.
\begin{proof}
Since $ev\circ \sigma=id$, $ev^*$ is injective.
Therefore since $w$ is a basis of $H^m(M)$, $$\chi(M)ev^*(\omega)=0 \Longleftrightarrow \chi(M)\omega =0
\Longleftrightarrow \chi(M)=0\text{ in }\Bbbk
.$$
\end{proof}
\begin{rem}\label{noyau section egale cohomologie relative lacets libres lacets constants}
Since $ev\circ \sigma=id_M$, $M$ is a subspace of $LM$ and we can consider the relative cohomology $H^*(LM,M)$. Using the long exact sequence associated,
$H^*(LM,M)$ can be identified with
$\text{Ker } \sigma^*:H^*(LM)\twoheadrightarrow H^*(M)$.
From part 3) of Theorem~\ref{formule loop coproduct}, we have that the loop coproduct vanishes on $H^*(LM,M)$.
In~\cite{Sullivan:openclosedstring}, Sullivan introduced a non trivial product
on $H^*(LM,M)$ of different degree that he called the cutting at any time $\vee$.
In~\cite{Goresky-Hingston:loopclosedgeodesics},
Goresky and Hingston rediscover this non trivial product that they denote $\circledast$.
\end{rem}
\section{Applications}
\begin{theor}\label{Applications}
Let $M$ be a connected, closed  $\Bbbk$-oriented manifold of dimension $m$.
Let $\omega\in H^m(M)$ be its orientation class.
Let $\chi(M)$ be its Euler characteristic.
Then 

1) $\chi(M) ev^*(\omega)\in H^m(LM_{[1]})$.

2) For any $a\in H^*(LM)$ of positive degree, 
$$\chi(M)a\cup ev^*(\omega)=0.$$
\end{theor}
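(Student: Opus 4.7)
The plan is to deduce both statements directly from Theorem~\ref{formule loop coproduct}, without ever re-opening the geometric definition of the loop coproduct.

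For part 1), I would simply read off $\chi(M)ev^*(\omega)$ as the value of the loop coproduct on the units: by part 6) of Theorem~\ref{formule loop coproduct},
$$
\mu^!\circ\tilde{\Delta}^*(1\otimes 1)=\chi(M)ev^*(\omega),
$$
and by part 7) the image of $\mu^!\circ\tilde{\Delta}^*$ lies in $H^*(LM_{[1]})$. So $\chi(M)ev^*(\omega)\in H^m(LM_{[1]})$.

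For part 2), the key is to split $a$ along the retract $ev^*\circ\sigma^*$. Since $ev\circ\sigma=id_M$, we have $H^*(LM)=\operatorname{Im}ev^*\oplus \operatorname{Ker}\sigma^*$; explicitly,
$$
a=ev^*(\sigma^*(a))+\bigl(a-ev^*(\sigma^*(a))\bigr),
$$
where the second summand lies in $\operatorname{Ker}\sigma^*$. By part 3) of Theorem~\ref{formule loop coproduct}, this second summand is annihilated by $\chi(M)ev^*(\omega)$, so
$$
\chi(M)\,a\cup ev^*(\omega)=\chi(M)\,ev^*(\sigma^*(a))\cup ev^*(\omega)=\chi(M)\,ev^*\bigl(\sigma^*(a)\cup\omega\bigr).
$$
Now $\sigma^*$ preserves degrees, so $\sigma^*(a)\in H^{|a|}(M)$ with $|a|>0$; hence $\sigma^*(a)\cup\omega\in H^{|a|+m}(M)$. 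Since $M$ is an $m$-dimensional manifold, $H^{>m}(M)=0$, so $\sigma^*(a)\cup\omega=0$ and part 2) follows.

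There is essentially no obstacle: the whole argument is a two-line consequence of Theorem~\ref{formule loop coproduct} once one notices that the only available ``room'' for $\sigma^*(a)\cup\omega$ to live in is above the top degree of $M$. The only subtle point to keep in mind is that the Kronecker-type splitting $a=ev^*\sigma^*(a)+(a-ev^*\sigma^*(a))$ is what lets one reduce everything to the two situations already covered by parts 3) and 6)--7) of Theorem~\ref{formule loop coproduct}.
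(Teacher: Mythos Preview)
Your proof is correct. For part 1) it is identical to the paper's: combine parts 6) and 7) of Theorem~\ref{formule loop coproduct}.

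For part 2) you take a slightly different route than the paper. The paper simply invokes parts 5) and 1) of Theorem~\ref{formule loop coproduct}: since $a$ has positive degree, $a\otimes 1\notin H^0(LM_{[1]})\otimes H^0(LM_{[1]})$, so part 5) gives $\mu^!\circ\tilde\Delta^*(a\otimes 1)=0$, while part 1) identifies this with $\chi(M)\,a\cup ev^*(\omega)$. You instead split $a$ via the section, kill the $\operatorname{Ker}\sigma^*$-piece using part 3), and kill the remaining $ev^*(\sigma^*(a)\cup\omega)$ by the dimension bound $H^{>m}(M)=0$. Both arguments are equally short; yours is more self-contained in that it does not quote part 5), while the paper's is more uniform with the rest of the exposition. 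Note that part 5) itself (via part 8) of Theorem~\ref{formule open string coproduct}) is proved by exactly the dimension argument you use, so the two proofs are really the same idea packaged differently.
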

\begin{proof}
Comparing 6) and 7) in Theorem~\ref{formule loop coproduct}, we get 1).

By 5) and 1) in Theorem~\ref{formule loop coproduct},
$$
0=\mu^!\circ\tilde{\Delta}^*(a\otimes 1)
=\chi(M) a\cup ev^*(\omega).
$$
\end{proof}
If  $\Bbbk$ is a field
then 1) means that for all non contractile free loop $\alpha$
and for all $a\in H_m(LM_{[\alpha]})$, $$\chi(M)H_m(ev)(a)=0.$$
\begin{rem}
In general, $ev^*(\omega)$ does not belong to $H^*(LM_{[1]})$
and $a\cup ev^*(\omega)$ is not trivial:
Suppose that $\Bbbk$ is a field.
Let $G$ be a connected compact Lie group. Note that $\chi(G)=0$.
For any $[\alpha]\in\pi_1(G)$, let $\Theta_\alpha$ be the usual isomorphism
from the tensor product   $H_*(\Omega_{[\alpha]} G)\otimes H_*(G)$ to   $H_*(LG_{[\alpha]})$. Here $\Omega_{[\alpha]} G$ denotes the pointed loops of $G$ homotopic
to $\alpha$. Let $\varepsilon$ be the augmentation of $H_*(\Omega_{[\alpha]} G)$.
The previous isomorphism $\Theta_\alpha$ fits into the commutative triangle
of graded vector spaces
$$
\xymatrix{
H_*(\Omega_{[\alpha]} G)\otimes H_*(G)
\ar[rr]^{\Theta_\alpha}_\cong\ar[rd]_{\varepsilon\otimes Id}
&& H_*(LG_{[\alpha]})\ar[ld]^{H_*(ev)}\\
&\Bbbk\otimes  H_*(G)
}
$$
Let $[G]$ be the fundamental class of $G$.
Recall that $[\alpha]$ is a generator of $H_0(\Omega_{[\alpha]} G)$.
Then
$$
H_{\text{dim }G}(ev)\circ\Theta_\alpha([\alpha]\otimes [G])=[G]\neq 0
$$
Therefore  $ev^*(\omega)$ does not belong to $H^{\text{dim }G}(LG_{[1]})$
for any non simply-connected, connected compact Lie group $G$ (e. g. $S^1$).

Let $\eta:{\Bbbk}\rightarrow H^*(\Omega G)$ be the unit map of $H^*(\Omega G)$.
The usual isomorphism of algebras $\Theta$ from the tensor product
of graded algebras $H^*(\Omega G)\otimes H^*(G)$ to   $H^*(LG)$
fits similarly into the commutative triangle
of graded algebras
$$
\xymatrix{
H^*(\Omega G)\otimes H^*(G)
\ar[rr]^{\Theta}_\cong
&& H^*(LG)\\
&\Bbbk\otimes  H^*(G)\ar[ul]^{\eta\otimes Id}\ar[ur]_{H^*(ev)}
}
$$
Therefore for any non-zero element $b$ of $H^*(\Omega G)$,
$$
\Theta(b\otimes 1)\cup H^*(ev)(\omega)=\Theta(b\otimes w)\neq 0.
$$
If $G$ is a connected compact Lie group such that $H^*(\Omega G)$
is not concentrated in degree $0$ (e. g. $S^3$), we have obtained an element
$a$ of positive degree such that $a\cup ev^*(\omega)$ is non zero.
\end{rem}
\begin{cor}\label{point fixe action du cercle a homotopie pres}
Let $M$ be a connected, closed  $\Bbbk$-oriented manifold of dimension $m$
such that in ${\Bbbk}$, $\chi(M)\neq 0$.
Let $\mu:S^1\times M\rightarrow M$ be a continuous map such that
the composite
$$
\{1\}\times M\rightarrow S^1\times M\buildrel{\mu}\over\rightarrow M
$$
is homotopic to the identity map.
Then there exists an map $\nu:S^1\times M\rightarrow M$ homotopic to $\mu$
who has at least a fixed point $m_0$, i. e. $\nu(S^1\times\{m_0\})=m_0$.
\end{cor}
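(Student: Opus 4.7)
My plan is to work with the adjoint map $\tilde{\mu}: M \to LM$ of $\mu$, defined by $(\tilde{\mu}(m))(s) = \mu(s,m)$. The hypothesis $\mu(1,-) \simeq \mathrm{id}_M$ translates to $ev \circ \tilde{\mu} \simeq \mathrm{id}_M$, which gives $\tilde{\mu}^* \circ ev^* = \mathrm{id}$ on cohomology. A fixed point $m_0$ of $\nu$ corresponds exactly to the equality $\tilde{\nu}(m_0) = \sigma(m_0)$, so the goal becomes: deform $\tilde{\mu}$ through maps $M \to LM$ to a $\tilde{\nu}$ that agrees with $\sigma$ at some point.

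The main obstacle — and the key application of our machinery — is to show that $\tilde{\mu}$ factors through the path-connected component $LM_{[1]}$ of freely contractible loops. Since $M$ is connected, $\tilde{\mu}(M)$ is contained in some single component $LM_{[\alpha]}$. Part 1 of Theorem~\ref{Applications} tells us that the class $\chi(M)\, ev^*(\omega)$ lies in $H^m(LM_{[1]})$, i.e., it restricts to zero on every other component. On the other hand, pulling back along $\tilde{\mu}$ gives $\tilde{\mu}^*(\chi(M)\, ev^*(\omega)) = \chi(M)\,\omega$, which is nonzero in $H^m(M) \cong \Bbbk\cdot\omega$ because $\chi(M) \neq 0$ in $\Bbbk$. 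If $\alpha \neq [1]$, the pullback would have to vanish, a contradiction. Hence $\tilde{\mu}(M) \subset LM_{[1]}$.

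The rest is elementary. Pick any $m_0 \in M$. Since $LM_{[1]}$ is path-connected and contains both $\tilde{\mu}(m_0)$ and $\sigma(m_0)$, choose a path $\gamma: [0,1] \to LM_{[1]}$ from $\tilde{\mu}(m_0)$ to $\sigma(m_0)$. Because $M$ is a manifold, the inclusion $\{m_0\} \hookrightarrow M$ is a cofibration, so by the homotopy extension property I can extend the partial homotopy $\gamma$ along $\{m_0\}$ to a full homotopy $H: [0,1] \times M \to LM$ with $H(0,-) = \tilde{\mu}$ and $H(-,m_0) = \gamma$. Setting $\tilde{\nu} := H(1,-)$, I get $\tilde{\nu} \simeq \tilde{\mu}$ and $\tilde{\nu}(m_0) = \sigma(m_0)$. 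Taking adjoints produces $\nu: S^1 \times M \to M$ homotopic to $\mu$ with $\nu(s, m_0) = \sigma(m_0)(s) = m_0$ for every $s \in S^1$, exhibiting $m_0$ as the desired fixed point.
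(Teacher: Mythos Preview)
Your proof is correct and follows essentially the same route as the paper: pass to the adjoint $\tilde{\mu}=\sigma_\mu:M\to LM$, use Theorem~\ref{Applications}~1) together with $\chi(M)\neq 0$ in $\Bbbk$ to force the image into $LM_{[1]}$, and then invoke the homotopy extension property at a single point to straighten $\tilde{\mu}(m_0)$ to the constant loop. The only cosmetic difference is that the paper applies the HEP to the cofibration $S^1\times\{m_0\}\hookrightarrow S^1\times M$ on the unadjointed side, whereas you apply it to $\{m_0\}\hookrightarrow M$ on the adjointed side; these are equivalent via the exponential law.
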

\begin{proof}
Let $\sigma_\mu:M\rightarrow LM$ be the map sending $m\in M$ to its orbit
$\mu(-,m):S^1\rightarrow M$.
Since for all $m\in M$, $ev\circ\sigma_\mu(m)=\mu(1,m)$, $\sigma_\mu$
is a section up to homotopy of $ev$.
Therefore in cohomology, $$\sigma_\mu^*\circ ev^*(\chi(M)\omega)=\chi(M)\omega.$$

Since $M$ is path-connected, $\sigma_\mu$ arrives in the path-connected component $LM_{[\alpha]}$ of a free loop $\alpha$. So $\sigma_\mu^*$ is trivial outside
of $H^*(LM_{[\alpha]})$. By 1) of Theorem~\ref{Applications},
$\chi(M) ev^*(\omega)\in H^m(LM_{[1]})$.
Since $\chi(M)$ is not zero in ${\Bbbk}$,
$\chi(M)\omega=\sigma_\mu^*\circ ev^*(\chi(M)\omega)$ is not trivial.
Therefore $\alpha$ is contractile, i. e. $[\alpha]=[1]$.

Let $i:\{m_0\}\hookrightarrow M$ be the inclusion of a non-degenerated base point into $M$. Since $\alpha$ is contractile, $\sigma_\mu(m_0)$ is homotopic to the constant loop $\hat{m_0}$ and so the following triangle commutes up to homotopy.
$$
\xymatrix{
S^1\times M\ar[r]^\mu
&M\\
S^1\times \{m_0\}\ar[u]^{S^1\times i}\ar[ur]_{\hat{m_0}}
}
$$
By the homotopy extension property of the cofibration
$S^1\times i:S^1\times \{m_0\}\hookrightarrow S^1\times M$, we can change up to homotopy $\mu$ into a map $\nu$ such that the triangle commutes now exactly.
\end{proof}
Corollary~\ref{point fixe action du cercle a homotopie pres}
should be considered as an homotopy version of the following classic result:
\begin{theor}(\cite[Theorem 4.7.12]{Spanier:livre}.
Compare also with~\cite[Theorem 5.39 or Corollary 6.17]{Kawakubo:transformationgroups})
Let $M$ be a compact Euclidean Neighborhood Retract (e. g. a compact topological
manifold~\cite[A.9]{Hatcher:algtop}) such that $\chi(M)\neq 0$.
Let $\mu:S^1\times M\rightarrow M$ be an action of the circle on $M$.
Then $M$ has at least a fixed point.
\end{theor}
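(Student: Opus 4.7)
The plan is to reduce this fixed-point assertion to the classical Lefschetz fixed point theorem for compact ENRs, exploiting the fact that $S^1$ is path-connected and contains elements of infinite order with dense cyclic subgroup.

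First I would fix an element $g\in S^1$ of the form $g=e^{2\pi i\alpha}$ with $\alpha$ irrational, so that the cyclic subgroup $\langle g\rangle=\{g^n:n\in\mathbb{Z}\}$ is dense in $S^1$. Consider the self-map $\mu_g:M\rightarrow M$, $m\mapsto \mu(g,m)$. Because $S^1$ is path-connected, any path from $1$ to $g$ in $S^1$ provides, via $\mu$, a homotopy from $\mu_1=\mu(1,-)$ to $\mu_g$; since $\mu_1$ is the identity of $M$ (this is where we use that $\mu$ is an action, i.e. $\mu(1,m)=m$), we conclude $\mu_g\simeq \mathrm{id}_M$. Consequently the Lefschetz number satisfies $L(\mu_g)=L(\mathrm{id}_M)=\chi(M)\neq 0$ in $\mathbb{Z}$ (or at least in $\Bbbk$; one needs the integer version, but for a compact ENR all rational Betti numbers are finite and the Euler characteristic computed from any field coincides with the topological one, so $\chi(M)\neq 0$ as an integer).

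Second I would invoke the Lefschetz fixed point theorem for compact ENRs (as in Spanier, Chap. 4, or Dold's book): any self-map of a compact ENR whose Lefschetz number is non-zero has at least one fixed point. This yields a point $m_0\in M$ with $\mu(g,m_0)=m_0$; equivalently, $m_0$ is fixed by every element of the cyclic subgroup $\langle g\rangle$.

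Finally I would promote this to a fixed point of the whole circle by a density argument. The set $\mathrm{Fix}(S^1,m_0):=\{h\in S^1:\mu(h,m_0)=m_0\}$ is a closed subgroup of $S^1$ by continuity of $\mu$, and it contains the dense subgroup $\langle g\rangle$; therefore it equals $S^1$, so $\mu(S^1\times\{m_0\})=\{m_0\}$ and $m_0$ is the desired fixed point.

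The main delicate point will be the appeal to the Lefschetz fixed point theorem in the ENR setting rather than the more familiar CW or finite-polyhedron setting; once that black box is granted, the remaining ingredients (path-connectedness of $S^1$ to get $\mu_g\simeq\mathrm{id}$, and density of $\langle g\rangle$ for irrational rotations to upgrade a $\langle g\rangle$-fixed point to an $S^1$-fixed point) are immediate. Note also that one must use $\chi(M)\neq 0$ in the integers (which is implied by $\chi(M)\neq 0$ in $\Bbbk$ when $\Bbbk$ has characteristic $0$, and for a compact ENR is equivalent to it since the integer Euler characteristic determines the mod-$p$ reductions), since the Lefschetz theorem requires a non-vanishing integer Lefschetz number.
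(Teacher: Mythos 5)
The paper does not prove this theorem at all: it is stated purely as a classical background result, with citations to Spanier (Theorem 4.7.12) and Kawakubo (Theorem 5.39 / Corollary 6.17), and serves only to motivate Corollary~\ref{point fixe action du cercle a homotopie pres} as a ``homotopy version.'' So there is no in-paper proof to compare against; your contribution is to supply one.

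Your argument is correct and is indeed the standard proof found in the cited references. The three ingredients are all sound: (i) picking $g=e^{2\pi i\alpha}$ with $\alpha$ irrational and using a path from $1$ to $g$ in $S^1$ (together with the action axiom $\mu(1,-)=\mathrm{id}$) to get $\mu_g\simeq\mathrm{id}_M$, hence $L(\mu_g)=\chi(M)\neq 0$; (ii) the Lefschetz fixed point theorem for compact ENRs, which is precisely the form Spanier proves in Chapter 4.7 and Dold proves in Chapter VII; and (iii) observing that the isotropy subgroup $H=\{h\in S^1:\mu(h,m_0)=m_0\}$ is closed (continuity of $\mu$, Hausdorffness of $M$) and a subgroup (action axioms), contains the dense subgroup $\langle g\rangle$, and therefore equals $S^1$. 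The aside about which coefficient ring $\chi(M)\neq 0$ refers to is unnecessary here: in this theorem (unlike the paper's Corollary~\ref{point fixe action du cercle a homotopie pres}, which works over a PID $\Bbbk$), $\chi(M)$ is simply the integer Euler characteristic of a compact ENR, which is exactly what the Lefschetz theorem requires.
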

\begin{rem}
If a map $\mu:S^1\times M\rightarrow M$ is only an action up to homotopy
then it may happen that $M$ has no fixed point.
Therefore the conclusion of
Corollary~\ref{point fixe action du cercle a homotopie pres}
 cannot be improved in general:
Consider the sphere $M=S^2$ in $\mathbb{R}^3$.
Let $\nu:S^1\times S^2\rightarrow S^2$ be the action given by rotation of axis $z$. Let $f:S^2\rightarrow S^2$ be the rotation of angle $\pi$ and of axis $y$.
Since $f$ is homotopic to the identity map, the composite
$f\circ\nu$ is an action up to homotopy without any fixed point.
\end{rem}
\begin{cor}\label{differentiel dans SSS en cohomologie}
Let $M$ be a connected, closed  $\Bbbk$-oriented manifold of dimension $m$.
Consider the cohomological Serre spectral sequence $(E_r^{*,*},d_r)$ associated
to the free loop fibration
$\Omega M\buildrel{i}\over\hookrightarrow LM\buildrel{ev}\over\twoheadrightarrow M$.
Suppose that the (conjugation) action of $\pi_1(M)$ on $H^*(\Omega M)$
is trivial.

Let $f$ be an element of $\tilde{H}^*(\Omega M)$.
Suppose that $f$ is in the image of
$i^*:H^*(LM)\rightarrow H^*(\Omega M)$.
Then $\chi(M)\omega\otimes f\in H^m(M)\otimes H^*(\Omega M)=E_2^{m,*}$ must be killed: there exist $r\geq 2$ and $x\in E_r^{*,*}$ such that $d_r(x)=\chi(M)\omega\otimes f$.
\end{cor}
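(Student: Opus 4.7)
The plan is to apply Lemma~\ref{differentiel dans SSS fibration avec section} directly to the free loop fibration $\Omega M \hookrightarrow LM \twoheadrightarrow M$, with the element $e := \chi(M)\omega \in H^m(M)$ and the trivial section $\sigma : M \hookrightarrow LM$ sending a point to the corresponding constant loop. This is precisely the setup for which the general lemma was designed, so the corollary should follow by specialization.

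First I would check all hypotheses of Lemma~\ref{differentiel dans SSS fibration avec section}. The base $M$ is path-connected by assumption, and $\sigma$ is even a strict section of $ev$ since $ev\circ\sigma = id_M$. The triviality of the $\pi_1(M)$-action on $H^*(\Omega M)$ is assumed in the corollary, which guarantees that the canonical map $i_0^* : E_2^{0,*} \to H^*(\Omega M)$ used to define $1\otimes f$ is an isomorphism. The essential cup-product vanishing
$$
\operatorname{Ker}\sigma^{*} \cup ev^{*}(\chi(M)\omega) = \{0\}
$$
is exactly part 3) of Theorem~\ref{formule loop coproduct}, so it comes for free.

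Lemma~\ref{differentiel dans SSS fibration avec section} then produces $r\geq 2$ and $x\in E_r^{*,*}$ with
$$
d_r(x) = (\chi(M)\omega \otimes 1) \cup (1\otimes f) \in E_2^{m,*}.
$$
The only remaining thing to verify is that the cup product on the right-hand side coincides with the class $\chi(M)\omega \otimes f$ appearing in the corollary, under the identification $E_2^{m,*} = H^m(M;H^*(\Omega M)) \cong H^m(M)\otimes H^*(\Omega M)$. This is a standard consequence of the multiplicative structure of the Serre spectral sequence on the $E_2$-page when the local coefficient system is trivial: the pairing $E_2^{m,0}\otimes E_2^{0,*}\to E_2^{m,*}$ is precisely the tensor product pairing.

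I do not expect any genuine obstacle here: the computational content has been absorbed into Theorem~\ref{formule loop coproduct} 3) (which already encodes why the hypothesis of Lemma~\ref{differentiel dans SSS fibration avec section} is met for the loop fibration with $e=\chi(M)\omega$), and the spectral sequence mechanics are exactly what the lemma was formulated to exploit. The mildest point of care is to recall that $\omega \in H^m(M)$ is a class in the base of the fibration (not in $LM$), so that $\omega\otimes 1 \in E_2^{m,0}$ really is $\chi(M)\omega$ viewed via $H^m(M) = H^m(M;H^0(\Omega M))$, which is immediate once the coefficient system is known to be trivial.
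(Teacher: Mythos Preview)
Your approach is exactly the paper's: invoke part 3) of Theorem~\ref{formule loop coproduct} to verify the cup-product vanishing hypothesis, then apply Lemma~\ref{differentiel dans SSS fibration avec section} to the free loop fibration.

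One point deserves more care. You assert the identification $E_2^{m,*}=H^m(M;H^*(\Omega M))\cong H^m(M)\otimes H^*(\Omega M)$ and justify it by the multiplicative structure of the $E_2$-page. That is not the reason: multiplicativity tells you the pairing $E_2^{m,0}\otimes E_2^{0,*}\to E_2^{m,*}$ agrees with the cup product, but it does not guarantee that this map is an isomorphism when $\Bbbk$ is merely a principal ideal domain. The paper handles this via the universal coefficient short exact sequence
\[
0\rightarrow H^p(M)\otimes H^q(\Omega M)\rightarrow E_2^{p,q}\rightarrow \mathrm{Tor}^{\Bbbk}\bigl(H^{p+1}(M),H^q(\Omega M)\bigr)\rightarrow 0,
\]
available because $M$ is compact (so $H_n(M)$ is finitely generated), and then observes that for $p=m$ the Tor term vanishes since $H^{m+1}(M)=0$. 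This is what makes the identification $E_2^{m,*}\cong H^m(M)\otimes H^*(\Omega M)$ legitimate and lets you write $(\chi(M)\omega\otimes 1)\cup(1\otimes f)=\chi(M)\omega\otimes f$. Once you insert that argument, your proof matches the paper's.
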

\begin{proof}
Since $M$ is compact, $\forall n\geq 0$, $H_n(M)$ is a finitely generated
${\Bbbk}$-module.
So since ${\Bbbk}$ is a principal ideal domain
by~\cite[Theorem 5.5.10]{Spanier:livre}, we have a short exact sequence
$$
0\rightarrow H^p(M)\otimes H^q(\Omega M)\buildrel{\mu}\over\rightarrow
E^{p,q}_2\rightarrow\text{Tor}^{\Bbbk}(H^{p+1}(M),H^q(\Omega M))\rightarrow 0
$$
where $\mu$ is a morphism of algebras.
Since $H^0(\Omega M)$,
$H^1(M)\cong\text{Hom}(H_1(M),{\Bbbk})$
and $H^{m+1}(M)=0$ are torsion free, $E^{p,q}_2\cong H^p(M)\otimes H^q(\Omega M)$ if $p=0$ or $p=m$ or $q=0$.

By part 3) of Theorem~\ref{formule loop coproduct},
$
\text{Ker } \sigma^*\cup \chi(M)ev^*(\omega)=\{0\}
$.
So by Lemma~\ref{differentiel dans SSS fibration avec section},
$e\otimes 1\cup 1\otimes f=\chi(M)\omega\otimes f$ must be killed.
\end{proof}
\begin{cor}\label{tncz implique euler nulle}
Let $M$ be a connected, closed  $\Bbbk$-oriented manifold.
Suppose that the free loop fibration
$\Omega M\buildrel{i}\over\hookrightarrow LM\buildrel{ev}\over\twoheadrightarrow M$ is Totally Non-Cohomologous to Zero, i. e. $H^*(i)$ is onto
and that $H^k(\Omega M)$ is a torsion free ${\Bbbk}$-module
for each $k\geq 1$.
Then $\chi(M)=0$ in ${\Bbbk}$ or $M$ is a point.
\end{cor}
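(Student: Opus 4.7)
The plan is to apply Lemma~\ref{tncz implique classe d'euler nulle cohomologie singuliere} to the free loop fibration $\Omega M\hookrightarrow LM\twoheadrightarrow M$, taking as section the constant-loop inclusion $\sigma:M\hookrightarrow LM$ (which is an honest section, $ev\circ\sigma=\mathrm{id}_M$), and as distinguished class $e:=\chi(M)\omega\in H^m(M)$. I need to verify each hypothesis and then unpack the resulting dichotomy.

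First I would check the hypotheses. The base $M$ is path-connected by assumption. The critical input $\mathrm{Ker}\,\sigma^*\cup ev^*(\chi(M)\omega)=\{0\}$ is precisely part 3) of Theorem~\ref{formule loop coproduct}. Since $M$ is closed, $H_n(M)$ is finitely generated over the PID $\Bbbk$ for every $n$; since $M$ is a closed connected oriented $m$-manifold, $H^m(M)=\Bbbk\cdot\omega$ is a free $\Bbbk$-module of rank one. The hypothesis that $H^k(\Omega M)$ is torsion-free for $k\geq 1$ implies in particular the torsion-freeness condition required by the lemma (for $q\geq 2$), and the TNCZ hypothesis is exactly surjectivity of $H^*(i)$.

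Lemma~\ref{tncz implique classe d'euler nulle cohomologie singuliere} then yields the dichotomy: either $\chi(M)\omega=0$ in $H^m(M)$, or $\tilde{H}^*(\Omega M)=\{0\}$. In the first case, since $\omega$ is a $\Bbbk$-basis of $H^m(M)\cong\Bbbk$, the scalar $\chi(M)$ must vanish in $\Bbbk$, which is the first alternative of the conclusion.

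The remaining work, and the only delicate step, is to deduce that the second case forces $M$ to be a point. From $\tilde{H}^*(\Omega M)=\{0\}$ we read off $H^0(\Omega M)=\Bbbk$, hence $\pi_1(M)=\pi_0(\Omega M)=0$, so $M$ is simply connected and $\Omega M$ is $\Bbbk$-acyclic. Considering the Serre spectral sequence of the path-loop fibration $\Omega M\hookrightarrow PM\twoheadrightarrow M$ (with simply-connected base, contractible total space, and fibre of trivial reduced cohomology), the $E_2$-page collapses to the row $E_2^{p,0}=H^p(M)$ and must abut to $H^*(PM)=\Bbbk$. Thus $\tilde{H}^*(M;\Bbbk)=\{0\}$; combined with the fact that $H^m(M)\cong\Bbbk$ for a closed oriented $m$-manifold, this forces $m=0$, and the connected $0$-manifold $M$ is a single point. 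The main obstacle is just keeping this last case-analysis clean: one must be careful that ``$\Omega M$ acyclic'' is genuinely strong enough, via the path-loop fibration, to rule out positive-dimensional $M$.
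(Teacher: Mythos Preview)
Your proof is correct and follows essentially the same route as the paper: both invoke part~3) of Theorem~\ref{formule loop coproduct} to feed Lemma~\ref{tncz implique classe d'euler nulle cohomologie singuliere}, then dispose of the case $\tilde{H}^*(\Omega M)=\{0\}$ by observing that $\pi_1(M)=0$ and that an acyclic loop space forces $H^*(M)\cong\Bbbk$. The only difference is that you spell out the path-loop Serre spectral sequence for this last implication, which the paper leaves implicit.
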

\begin{proof}

Suppose that $\chi(M)$ is not equal to zero in ${\Bbbk}$.
By part 3) of Theorem~\ref{formule loop coproduct},
$
\text{Ker } \sigma^*\cup \chi(M)ev^*(\omega)=\{0\}
$. So by Lemma~\ref{tncz implique classe d'euler nulle cohomologie singuliere},
$\tilde{H}^{*}(\Omega M)=\{0\}$. This means that
 $H^{>0}(\Omega M)=\{0\}$ and that $\pi_1(M)=\{0\}$.
So $H^*(M)\cong {\Bbbk}$. Since $H^{\text{dim }M}(M)={\Bbbk}\omega$,
$M$ must be of dimension $0$, i. e. $M$ is a point.
\end{proof}
\noindent{\bf  Interpretation and proofs of Corollaries~\ref{differentiel dans SSS en cohomologie} and~\ref{tncz implique euler nulle} in term of integration along the
basis.}
Let $F\buildrel{i}\over\hookrightarrow E\buildrel{p}\over\twoheadrightarrow M$ be a fibration.
Suppose that $\pi_1(M)$ acts trivially on $H^q(F)$.
Let $\int i:H^q(F)\rightarrow H^{q+m}(E)$
be the composite
$$
H^q(F)\build\rightarrow_\cong^{\omega\otimes Id} H^m(M)\otimes H^q(F)=E_2^{m,q}
\twoheadrightarrow E_\infty^{m,q}=F^mH^{m+q}(E)\subset
H^{m+q}(E).
$$
If $M$ is a sphere, this {\it integration along the basis} $\int i$
appears in Wang exact sequence and the following two properties are well
known~\cite[Theorems (3.1) and (3.5) Chapter VII]{Whitehead:eltsoht}:
for $x\in H^*(F)$ and $y\in H^*(E)$,
$$
(\int i)(i^*(y)\cup x)=y\cup (\int i)(x)
$$
and
$
p^*(\omega)=(\int i)(1)
$.
In general, these properties are easy to deduce from the multiplicativity
and the naturality of the Serre spectral sequence.
In particular, we have $
(\int i)\circ i^*(y)=y\cup p^*(\omega).
$
Since the inclusion of the fibre $i:F\hookrightarrow E$
is the pull-back along $p$ of the embedding $*\hookrightarrow M$,
following Section~\ref{shriek d'un pull-back d'un embedding},
one can define a shriek map $i^!$ for $i$.
In this paper, we will not use that
$i^!$ coincides with $(\int i)$ although this should follows from the diagram in~\cite[(2) p. 12]{leborgne:string}.

Consider the free loop fibration $\Omega M\buildrel{i}\over\hookrightarrow LM\buildrel{ev}\over\twoheadrightarrow M$.
In this case, $i_!$ is the intersection morphism $H_{*+\text{dim }M}(LM)\rightarrow H_*(\Omega M)$ defined by Chas and Sullivan.

Let $f$ be an element as in Corollary~\ref{differentiel dans SSS en cohomologie}. By Theorem~\ref{Applications},
$$
\chi(M)\int i(f)=\chi(M)(\int i)\circ i^*(c)=\chi(M)c\cup ev^*(\omega)=0.
$$
So we have recover Corollary~\ref{differentiel dans SSS en cohomologie}.

Suppose that $i^*$ is onto and that $H^*(\Omega M)$ is torsion free,
the Serre spectral sequence collapses at the $E_2$-term.
So $\int i$ is injective. Therefore $\chi(M)f=0$.
And we have recover Corollary~\ref{tncz implique euler nulle}.
\section{TNCZ free loop fibrations}
Recall our result on TNCZ free loop fibration.
\begin{cor}(Corollary~\ref{tncz implique euler nulle})
Let $M$ be a connected, closed  $\Bbbk$-oriented manifold.
Suppose that the free loop fibration
$\Omega M\buildrel{i}\over\hookrightarrow LM\buildrel{ev}\over\twoheadrightarrow M$ is Totally Non-Cohomologous to Zero, i. e. $H^*(i)$ is onto
and that $H^k(\Omega M)$ is a torsion free ${\Bbbk}$-module
for each $k\geq 1$.
Then $\chi(M)=0$ in ${\Bbbk}$ or $M$ is a point.
\end{cor}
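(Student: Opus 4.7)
The plan is to argue by contradiction, reducing the statement to the general TNCZ vanishing Lemma~\ref{tncz implique classe d'euler nulle cohomologie singuliere} applied to the free loop fibration and the class $e := \chi(M)\omega \in H^m(M)$. The constant loop inclusion $\sigma : M \hookrightarrow LM$ is a genuine section (not merely up to homotopy) of the evaluation $ev : LM \twoheadrightarrow M$, so the fibration $ev$ satisfies the section hypothesis of that lemma.

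First, I would invoke part 3) of Theorem~\ref{formule loop coproduct}, which is precisely the key input
\[
\text{Ker }\sigma^* \cup \chi(M)\,ev^*(\omega) = \{0\}.
\]
This is exactly the annihilation condition $\text{Ker }\sigma^* \cup p^*(e) = \{0\}$ demanded by Lemma~\ref{tncz implique classe d'euler nulle cohomologie singuliere}, with $p = ev$ and $e = \chi(M)\omega$. The base $M$ is path-connected; since $M$ is a closed manifold, $H_n(M)$ is finitely generated over $\Bbbk$ for every $n$; and $H^m(M) = \Bbbk\cdot\omega$ is free of rank one. The fibre hypothesis that $H^q(\Omega M)$ be torsion-free for $q \geq 2$ is imposed directly in the statement. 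So all the hypotheses of Lemma~\ref{tncz implique classe d'euler nulle cohomologie singuliere} are satisfied.

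Now suppose, toward a contradiction, that $\chi(M) \neq 0$ in $\Bbbk$. Then $e = \chi(M)\omega \neq 0$ in $H^m(M)$ since $\omega$ generates the free module $H^m(M)$. By the TNCZ hypothesis $H^*(i) : H^*(LM) \twoheadrightarrow H^*(\Omega M)$ is onto, so Lemma~\ref{tncz implique classe d'euler nulle cohomologie singuliere} forces $\tilde{H}^*(\Omega M) = \{0\}$. In particular $H^{>0}(\Omega M) = \{0\}$, which by the Hurewicz theorem (and the fact that $\Omega M$ has the homotopy type of a CW complex) forces $\pi_1(M) = \pi_0(\Omega M)$ to be trivial and all higher homotopy groups of $M$ to vanish rationally — the cleanest way to conclude being that $H^*(M)$ is then forced to be $\Bbbk$, using the Serre spectral sequence for $ev$, which collapses to $H^*(M) \otimes H^*(\Omega M) = H^*(M) \otimes \Bbbk$.

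Finally, $H^*(M) \cong \Bbbk$ concentrated in degree $0$ combined with Poincaré duality $H^{\dim M}(M) \cong \Bbbk\cdot\omega \neq 0$ forces $\dim M = 0$, so the connected closed manifold $M$ is a point. The main obstacle is simply verifying that the fibre hypothesis in Lemma~\ref{tncz implique classe d'euler nulle cohomologie singuliere} is met: I must check that $H^q(\Omega M)$ torsion free for $q \geq 2$ (given in the corollary) together with $H^0(\Omega M) = \Bbbk$ (since the TNCZ hypothesis forces the action of $\pi_1(M)$ on $H^*(\Omega M)$ to be trivial and in particular $\Omega M$ is cohomologically behaved as needed) suffices — this is routine but deserves the explicit reminder that the edge case $q = 1$ doesn't occur as a problem since $H^m(M)$ itself is $\Bbbk$-free. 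With that bookkeeping done, the contradiction is immediate.
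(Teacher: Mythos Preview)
Your argument follows the paper's exactly: invoke Theorem~\ref{formule loop coproduct} 3) for the annihilation condition, apply Lemma~\ref{tncz implique classe d'euler nulle cohomologie singuliere}, deduce $\tilde{H}^*(\Omega M)=0$, and finish by Poincar\'e duality. Two small corrections. First, your final-paragraph worry about $q=1$ is moot: the corollary's hypothesis already gives torsion-freeness for $k\geq 1$, while the lemma only needs $q\geq 2$, so there is nothing to check. Second, and more substantively, the spectral sequence you name in the penultimate paragraph is the wrong one: the Serre spectral sequence of $ev$ converges to $H^*(LM)$, not to $H^*(M)$, so collapsing it to $H^*(M)\otimes\Bbbk$ tells you $H^*(LM)\cong H^*(M)$, which is not what you want. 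To get $H^*(M)\cong\Bbbk$ from $\tilde{H}^*(\Omega M)=0$ you should run the Serre spectral sequence of the path--loop fibration $\Omega M\hookrightarrow PM\simeq *\twoheadrightarrow M$: with $\pi_1(M)=1$ and $H^*(\Omega M)=\Bbbk$ concentrated in degree $0$, the $E_2$-page is $H^*(M)$ sitting on the bottom row with no possible differentials, and since $E_\infty=H^*(PM)=\Bbbk$ this forces $H^{>0}(M)=0$. The paper's proof compresses this into the single line ``So $H^*(M)\cong\Bbbk$.''
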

1) The first examples to have in mind are connected compact Lie groups.

2) Let $M$ be a sphere $S^d$ or the complex or quaternionic projective
space $\mathbb{CP}^n$, $\mathbb{HP}^n$.
Since $\chi(S^d)=1+(-1)^d$ and $\chi(\mathbb{CP}^n)=\chi(\mathbb{HP}^n)=n+1$,
it follows from our calculations in\cite{MenichiL:cohrfl}
that over any commutative ring ${\Bbbk}$,
$H^*(i;{\Bbbk})$ is onto if and only if $\chi(M)=0$ in ${\Bbbk}$
(when ${\Bbbk}$ is a field, this follows easily from the formality
of $M$ using the Jones isomorphism between Hochschild homology and free loop
space cohomology).

3) The converse of Corollary~\ref{tncz implique euler nulle} is not true:
if $n+1$ is not equal to $0$ in ${\Bbbk}$, take for example $M=\mathbb{CP}^n\times S^3$.

Over $\mathbb{Q}$, Vigu\'e-Poirrier has characterised which free loop fibrations
are TNCZ.
\begin{theor}\cite{VigueM:lacetslibresTNCZ}\label{tncz sur Q}
Let $X$ be a simply-connected topological space such that for all $n$,
$H^n(X;\mathbb{Q})$ is finite dimensional.
Then $$H^*(i;\mathbb{Q}):H^*(LX;\mathbb{Q})\rightarrow
H^*(\Omega X;\mathbb{Q})$$ is onto if and only if $H^*(X;\mathbb{Q})$
is a free graded commutative algebra.
\end{theor}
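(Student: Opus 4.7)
The plan is to translate the question into Sullivan's theory of minimal models over $\mathbb{Q}$. Let $(\Lambda V,d)$ denote the minimal Sullivan model of $X$; since $X$ is simply-connected with $H^n(X;\mathbb{Q})$ finite-dimensional in every degree, $V=\bigoplus_{n\ge 2}V^n$ has each $V^n$ finite-dimensional and $d(V)\subset\Lambda^{\ge 2}V$. By a classical construction of Sullivan (made explicit by Vigu\'e-Poirrier and Sullivan), a Sullivan model for the free loop fibration $\Omega X\hookrightarrow LX\twoheadrightarrow X$ is
$$
(\Lambda V,d)\hookrightarrow (\Lambda V\otimes \Lambda sV,D)\xrightarrow{\pi}(\Lambda sV,0),
$$
where $sV$ is the suspension of $V$ ($|sv|=|v|-1$), $D|_V=d$, and $D(sv)=-S(dv)$ for the unique degree $-1$ derivation $S$ on $\Lambda V\otimes\Lambda sV$ with $S(v)=sv$ and $S(sv)=0$; the map $\pi$, which sends $V$ to $0$, realises $H^*(i;\mathbb{Q})$.

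For ($\Leftarrow$), if $H^*(X;\mathbb{Q})$ is free graded commutative then $X$ is formal and one may take the minimal model with $d=0$. Consequently $D=0$, $H^*(LX;\mathbb{Q})\cong\Lambda V\otimes\Lambda sV$, and $\pi^*$ is the tautological projection, manifestly surjective onto $\Lambda sV=H^*(\Omega X;\mathbb{Q})$.

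For ($\Rightarrow$), assume $H^*(i;\mathbb{Q})$ is surjective and argue that $d=0$. Setting $J:=\ker\pi=V\cdot(\Lambda V\otimes\Lambda sV)$, the decomposability of $d$ ensures $D(J)\subset J$ and that the induced differential on $\Lambda sV$ is zero, giving a short exact sequence of complexes with connecting map $\delta:H^*(\Lambda sV,0)\to H^{*+1}(J,D)$, and $\pi^*$ is surjective iff $\delta=0$. Suppose for contradiction $d\ne 0$, and pick $v\in V$ of minimal degree $n$ with $dv\ne 0$; by minimality every generator in $V^{<n}$ is $d$-closed. A lift of $[sv]$ to a $D$-cocycle has the form $sv+w$ with $w\in J$ of degree $n-1$, and a direct degree count ($|v_j|+|\beta_j|=n-1$) forces each $v$-letter $v_j$ appearing in $w=\sum v_j\beta_j$ to lie in $V^{<n}$, so $dv_j=0$ and the cocycle condition reduces to the equation $S(dv)=\sum_j\pm v_j\,D(\beta_j)$ in $\Lambda V\otimes\Lambda sV$.

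The main obstacle is to show this equation has no solution, a statement of pure minimal-model algebra. My plan is to introduce the increasing filtration of $\Lambda V\otimes\Lambda sV$ by $V$-word-length (ignoring $sV$); since $d$ is decomposable, $D$ strictly raises this filtration, and on the associated graded the differential $D_0$ is the Koszul-type piece determined by the quadratic part $d_2:V\to\Lambda^2V$ of $d$. Writing $dv=d_pv+\text{higher word-length}$ with $d_pv\in\Lambda^pV^{<n}$ and $p\ge 2$, the class $\delta[sv]$ has image in the associated graded represented by $S(d_pv)\in\Lambda^{p-1}V^{<n}\otimes sV^{<n}$. On the sub-DGA $\Lambda V^{<n}\otimes\Lambda sV^{<n}$ the differential $D_0$ is identically zero by the minimality of $v$ (both $V^{<n}$ and $sV^{<n}$ are $D$-closed), so $S(d_pv)\ne 0$ represents a non-trivial cohomology class there; the technical heart of the argument, which I expect to be the hardest step, is to check that no higher-filtration correction in the full complex can express $S(d_pv)$ as a $D$-coboundary, so that $\delta[sv]\ne 0$ in $H^n(J,D)$. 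This contradicts surjectivity of $i^*$ and forces $d=0$, whence $H^*(X;\mathbb{Q})\cong\Lambda V$ is free graded commutative.
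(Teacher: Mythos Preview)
The paper does not prove this theorem; it is quoted as a result of Vigu\'e-Poirrier and used only as background. So there is no ``paper's own proof'' to compare against, and your proposal must be judged on its own.

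Your translation into the Vigu\'e-Poirrier--Sullivan model of $LX$ is the standard route and is correctly set up. The ($\Leftarrow$) direction is fine, although invoking ``formality'' is slightly off: what you really use is that if $H^*(X;\mathbb{Q})\cong\Lambda W$ with $W=W^{\ge 2}$, then choosing cocycle lifts of algebra generators gives a CDGA map $(\Lambda W,0)\to(\Lambda V,d)$ which is a quasi-isomorphism; uniqueness of minimal models then forces $d=0$.

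In the ($\Rightarrow$) direction you stop exactly where the argument becomes easy, and then propose machinery (the word-length filtration, an associated-graded $D_0$, a spectral-sequence style ``no higher correction'' check) that you do not carry out. In fact no such machinery is needed, and the same degree count you used for the $v_j$ finishes the job immediately. You have $w\in J=\Lambda^{+}V\otimes\Lambda sV$ in degree $n-1$; every $V$-letter in $w$ has degree $\le n-1$ and every $sV$-letter has degree $\le n-3$, hence
\[
w\in \Lambda^{+}V^{<n}\otimes\Lambda\, s(V^{<n}).
\]
By minimality of $n$ one has $d|_{V^{<n}}=0$, so $D$ vanishes identically on the subalgebra $\Lambda V^{<n}\otimes\Lambda\, s(V^{<n})$; in particular $D(w)=0$. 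The cocycle condition $D(sv+w)=0$ then reads $S(dv)=0$. Since $S$ is injective on $\Lambda^{+}V$ (it is part of the contracting homotopy of the acyclic Koszul complex $(\Lambda V\otimes\Lambda sV,S)$), this forces $dv=0$, the desired contradiction. So your proof is essentially complete once you notice that the ``hardest step'' is vacuous; the filtration argument you sketch is unnecessary and, as written, is not a proof.
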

In~\cite[Theorem 2]{Kuribayashi:freeloopgrassmannstiefel},
Kuribayashi studied TNCZ free loop fibrations for some homogeneous spaces
over a prime field $\mathbb{F}_p$.

Let $V_k(\mathbb{R}^n)$ denotes the real Stiefel manifold of
orthonormal $k$-frames
in $\mathbb{R}^n$. Using the fibration $S^{n-k}\hookrightarrow V_k(\mathbb{R}^n)\twoheadrightarrow V_{k-1}(\mathbb{R}^n)$, we see that if $k\geq 2$, $\chi(V_k(\mathbb{R}^n))=0$.
Similarly for the complex or quartenionic Stiefel manifold,
$\chi(V_k(\mathbb{C}^n))=\chi(V_k(\mathbb{H}^n))=0$.
The Euler characteristic $\chi(G/H)$ of the quotient of a compact connected
Lie group $G$ by a connected closed subgroup $H$ of same rank is the quotient
$\vert W(G)\vert/\vert W(H)\vert$ of the cardinals of their Weyl groups~\cite[VII.Theorem 3.13]{Mimura-Toda:topliegroups}. Therefore $\chi(Sp(n)/U(n)=2^n$ 
and for Grassmannians $\chi(G_k(\mathbb{C}^n))=\chi(G_k(\mathbb{H}^n))=\binom {n}{k}$. We can now rewrite the main theorem of~\cite{Kuribayashi:freeloopgrassmannstiefel} in term of Euler characteristics.
\begin{theor}(\cite[Theorem 2]{Kuribayashi:freeloopgrassmannstiefel})

1) Let $M$ be $Sp(n)/U(n)$ or $V_k(\mathbb{C}^n))$ or $V_k(\mathbb{H}^n)$.
Then $H^*(i;\mathbb{F}_p):H^*(LM;\mathbb{F}_p)\rightarrow
H^*(\Omega M;\mathbb{F}_p)$ is onto if and only if $\chi(M)=0$ modulo $p$.

2) Let $M$ be $G_m(\mathbb{C}^{m+n})$ or $G_m(\mathbb{H}^{m+n})$
with $m$ and $n\geq 2$ and $p$ any prime.
Then $H^*(i;\mathbb{F}_p):H^*(LM;\mathbb{F}_p)\rightarrow
H^*(\Omega M;\mathbb{F}_p)$ is not onto.

3) Let $p$ an odd prime. Then
$H^*(i;\mathbb{F}_p):H^*(LV_m(\mathbb{R}^{m+n});\mathbb{F}_p)\rightarrow
H^*(\Omega V_m(\mathbb{R}^{m+n});\mathbb{F}_p)$
is onto if and only if $n$ is odd.
\end{theor}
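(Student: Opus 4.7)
The plan is to combine Corollary~\ref{tncz implique euler nulle} of this paper with classical Euler-characteristic and mod $p$ cohomology computations for classical homogeneous spaces. First I would record the relevant Euler characteristics, using either the iterated sphere-bundle towers of the Stiefel manifolds or the Weyl-group formula $\chi(G/H)=|W(G)|/|W(H)|$ for equal-rank inclusions: $\chi(V_k(\mathbb{C}^n))=\chi(V_k(\mathbb{H}^n))=0$ for $k\geq 1$, $\chi(Sp(n)/U(n))=2^n$, $\chi(G_m(\mathbb{F}^{m+n}))=\binom{m+n}{m}$ for $\mathbb{F}=\mathbb{C},\mathbb{H}$, and $\chi(V_m(\mathbb{R}^{m+n}))=0$ as soon as $m\geq 2$.

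For the only-if directions of part 1, and for the primes $p\nmid\binom{m+n}{m}$ in part 2, I apply Corollary~\ref{tncz implique euler nulle} over $\Bbbk=\mathbb{F}_p$: since $\mathbb{F}_p$ is a field, the torsion-freeness hypothesis on $H^k(\Omega M;\mathbb{F}_p)$ is automatic, so TNCZ forces $\chi(M)\equiv 0\pmod p$ (unless $M$ is a point). This immediately rules out TNCZ for $Sp(n)/U(n)$ at odd $p$, and reduces the Grassmannian case of part 2 to the primes dividing $\binom{m+n}{m}$.

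The if-directions in parts 1 and 3 proceed by lifting generators of $H^*(\Omega M;\mathbb{F}_p)$ along $i^*$. The observation driving every such lift is that, under the Euler-characteristic hypothesis (resp.\ $n$ odd for part 3), the mod $p$ cohomology of the space in question becomes an exterior algebra on odd-degree generators: this holds for $Sp(n)/U(n)$ exactly at $p=2$, for complex and quaternionic Stiefel manifolds at any $p$, and for $V_m(\mathbb{R}^{m+n})$ at odd $p$ precisely when $n$ is odd — the parity of $n$ governs whether the mod $p$ cohomology of $SO(n)$ (and hence of the quotient) acquires an extra even-degree Euler-type generator. In each such case the Serre spectral sequence of $\Omega M\to LM\to M$ collapses at $E_2$ for parity reasons, giving TNCZ; equivalently one runs the mod $p$ analogue of Theorem~\ref{tncz sur Q} via the Jones isomorphism between $H^*(LM;\mathbb{F}_p)$ and the Hochschild cohomology of a formal mod $p$ model of $M$.

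The hard part, and the main obstacle, is the residual case of part 2: showing that $H^*(i;\mathbb{F}_p)$ fails to be onto even when $p\mid\binom{m+n}{m}$, where Corollary~\ref{tncz implique euler nulle} yields no information. Here the mod $p$ cohomology of $G_m(\mathbb{F}^{m+n})$ is a truncated polynomial algebra on Chern or symplectic Pontryagin classes subject to genuine relations, hence \emph{not} a free graded commutative algebra; one must exhibit directly a class in $H^*(\Omega G_m;\mathbb{F}_p)$ that is hit by a non-trivial transgression in the Serre spectral sequence of $\Omega G_m\to LG_m\to G_m$, blocking surjectivity of $i^*$. This explicit transgression analysis is the technical heart of Kuribayashi's argument in~\cite{Kuribayashi:freeloopgrassmannstiefel}, and the contribution of the present paper is to reduce the entire theorem to precisely this one direct spectral-sequence computation.
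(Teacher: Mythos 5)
The paper does not prove this theorem at all: the author explicitly cites it as \cite[Theorem 2]{Kuribayashi:freeloopgrassmannstiefel} and merely restates Kuribayashi's case-by-case results in terms of Euler characteristics, using the computations $\chi(Sp(n)/U(n))=2^n$, $\chi(V_k(\mathbb{C}^n))=\chi(V_k(\mathbb{H}^n))=0$, $\chi(G_m(\mathbb{F}^{m+n}))=\binom{m+n}{m}$, and $\chi(V_m(\mathbb{R}^{m+n}))=0$ for $m\geq 2$. The point being made in the paper is that Kuribayashi's results are consistent with Corollary~\ref{tncz implique euler nulle} but do not follow from it, and that the converse of that corollary is open mod $p$. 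So you should be aware that there is no ``paper's proof'' to compare against; you are proposing a proof where the paper offers none.

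That said, your proposal has a genuine gap. The necessity direction (TNCZ $\Rightarrow \chi\equiv 0\bmod p$) via Corollary~\ref{tncz implique euler nulle} is valid and is exactly the consistency check the author is implicitly recording. But the sufficiency argument is wrong. You claim the cohomology $H^*(Sp(n)/U(n);\mathbb{F}_2)$ is an exterior algebra on odd-degree generators; this is false. The Lagrangian Grassmannian $Sp(n)/U(n)$ has a CW structure with cells in even dimensions only, so its cohomology (with any coefficients) is concentrated in even degrees -- already for $n=1$, $Sp(1)/U(1)=S^2$, whose $\mathbb{F}_2$-cohomology is a truncated polynomial algebra, not an exterior algebra on odd classes. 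More fundamentally, the ``mod $p$ analogue of Theorem~\ref{tncz sur Q}'' you invoke is precisely what is \emph{not} available: the paper's own example of spheres shows that $LS^2$ \emph{is} TNCZ over $\mathbb{F}_2$ even though $H^*(S^2;\mathbb{F}_2)$ is not a free graded commutative $\mathbb{F}_2$-algebra, so Vigu\'e-Poirrier's criterion cannot be transported to $\mathbb{F}_p$ in the naive form your argument relies on. (Formality of these homogeneous spaces over $\mathbb{F}_p$ is also a nontrivial claim that would need independent justification.) Finally, you acknowledge that part 2 must be imported wholesale from Kuribayashi's spectral-sequence analysis; in fact, for the reasons above, every sufficiency direction in parts 1 and 3 must also be imported, which collapses your proposal back to the paper's stance of citing Kuribayashi outright.
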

So over $\mathbb{F}_p$, it is not clear when the converse of
Corollary~\ref{tncz implique euler nulle}
holds or not.

Let $X$ be a topological space.
Suppose that $\forall n\geq 0$, $H_n(\Omega X;\mathbb{Z})$ is a finitely
generated free abelian group, that $H_*(LX;\mathbb{Z})$ has no $p$-torsion
and that $H^*(i;\mathbb{F}_p):H^*(LX;\mathbb{F}_p)\rightarrow
H^*(\Omega X;\mathbb{F}_p)$ is onto. Then by the universal coefficient theorem
for homology, $H^*(i;\mathbb{Q}):H^*(LX;\mathbb{Q})\rightarrow
H^*(\Omega X;\mathbb{Q})$ is onto.

We now give a last result on TNCZ free loop fibration due to Iwase in
the context of classifying space $BG$ of finite loop spaces.
\begin{theor}~\cite[Theorem 2.2]{Iwase:adjoint}\label{TNCZ implique Pontryagin commutatif}
Let $X$ be a pointed topological space.
If $H_*(i;{\Bbbk}):H_*(\Omega X;{\Bbbk})\rightarrow H_*(LX;{\Bbbk})$
is injective then the Pontryagin algebra $H_*(\Omega X;{\Bbbk})$ is graded commutative (in particular $\pi_1(X)$ is abelian).
\end{theor}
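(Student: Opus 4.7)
The plan is to exploit the $S^1$-action on $LX$ by rotation of loops. Writing $\mu:\Omega X\times\Omega X\to\Omega X$ for concatenation of based loops (the Pontryagin product) and $\mu^{op}(\alpha,\beta):=\mu(\beta,\alpha)$ for its opposite, I would first show that the composites $i\circ\mu$ and $i\circ\mu^{op}:\Omega X\times\Omega X\to LX$ are homotopic. To see this, let $\rho_t$ denote rotation of a free loop by $t\in\mathbb{R}/\mathbb{Z}$, and consider
\[
H:I\times\Omega X\times\Omega X\to LX,\qquad H_s(\alpha,\beta):=\rho_{s/2}\bigl(\mu(\alpha,\beta)\bigr).
\]
At $s=0$ this recovers $i\circ\mu$, and with the standard model $S^1=[0,1]/(0\sim 1)$ a direct reparametrization check yields $\rho_{1/2}(\alpha*\beta)=\beta*\alpha$ on the nose, so $H_1=i\circ\mu^{op}$.

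Passing to singular homology one then obtains the identity $i_*\circ\mu_* = i_*\circ\mu^{op}_*$ as maps $H_*(\Omega X;\Bbbk)^{\otimes 2}\to H_*(LX;\Bbbk)$. With the graded K\"unneth twist $\tau(a\otimes b)=(-1)^{|a||b|}\,b\otimes a$, one has $\mu^{op}_*=\mu_*\circ\tau$, so injectivity of $H_*(i;\Bbbk)$ forces $\mu_*=\mu_*\circ\tau$. This is exactly graded commutativity of the Pontryagin algebra $H_*(\Omega X;\Bbbk)$. Specializing to degree zero, where $H_0(\Omega X;\Bbbk)=\Bbbk[\pi_1(X)]$ as algebras (the algebra product extending the group multiplication in $\pi_1(X)$), commutativity of the algebra forces $\pi_1(X)$ itself to be abelian.

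The principal obstacle is modest and essentially bookkeeping: verifying joint continuity of $H$ in $(s,\alpha,\beta)$, checking the on-the-nose equality $\rho_{1/2}(\alpha*\beta)=\beta*\alpha$ once parametrizations are pinned down, and matching the Koszul sign convention used for $\tau$ with the one defining graded commutativity. The conceptual content, however, is clean: the extra freedom to rotate free loops, unavailable in $\Omega X$, is precisely what forces the Pontryagin algebra to become graded commutative as soon as $i_*$ sees no cancellations in homology.
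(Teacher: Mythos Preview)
Your proposal is correct and follows essentially the same approach as the paper: both use the rotation action on $LX$ to build a homotopy $H_s(\alpha,\beta)=\rho_{s/2}(\alpha*\beta)$ between $i\circ\mu$ and $i\circ\mu\circ\tau$, then invoke injectivity of $H_*(i)$. The only cosmetic difference is that the paper routes the homotopy through $LX\times_X LX$ before restricting to $\Omega X\times\Omega X$, whereas you work directly on $\Omega X\times\Omega X$; the underlying homotopy is identical.
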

\begin{proof}
We have the following two strictly commutative squares and the following
triangle commutting up to a homotopy $H$.
$$
\xymatrix{
\Omega X\times\Omega X\ar[r]^j\ar[d]_\mu
&LX\times_X LX\ar[d]^\mu\\
\Omega X\ar[r]_i
&LX
}
\xymatrix{
\Omega X\times\Omega X\ar[r]^j\ar[d]_\tau
&LX\times_X LX\ar[d]_\tau\ar[dr]^-\mu\\
\Omega X\times\Omega X\ar[r]_j
&LX\times_X LX\ar[r]_-\mu
&LX
}
$$
where the maps $\mu$ are the composition of loops and the maps $\tau$
are the exchange isomorphisms. 
The homotopy $H$ is the restriction to $[0,1/2]\times LX\times_X LX$
of the composite of
$$Id\times \mu:S^1\times LX\times_X LX\rightarrow S^1\times LX$$
and of the action of the circle on free loops $S^1\times LX\rightarrow LX$.
So finally, $i\circ\mu$ is homotopic to $i\circ\mu\circ\tau$.
Since $H_*(i)$ is injective, $H_*(\mu)\circ H_*(\tau)=H_*(\mu)$.
\end{proof}
Note that our homotopy between $i\circ\mu$ and $i\circ\mu\circ\tau$
is much simpler than the one arriving in $EG\times_G G^{ad}$ given by
Iwase in~\cite[Proof of Lemma 3.1]{Iwase:adjoint}.
\begin{ex}\label{exemple des suspensions}(Suspension)
Suppose that ${\Bbbk}$ is a field.
Let $X$ be a path-connected space such that
$H_*(X)$ is not concentrated in degree $0$.
By Bott-Samelson theorem, the Pontryagin algebra $H_*(\Omega \Sigma X)$
is isomorphic to the tensor algebra $TH_+(X)$ on the homology of $X$
in positive degrees.
Suppose that $H^*(i;{\Bbbk}):H^*(L\Sigma X;{\Bbbk})\rightarrow
H^*(\Omega \Sigma X;{\Bbbk})$ is surjective.
Then $H_*(i;{\Bbbk}):H_*(\Omega\Sigma X;{\Bbbk})\rightarrow
H_*(L\Sigma X;{\Bbbk})$ is injective. So by Theorem~\ref{TNCZ implique Pontryagin commutatif}, the Pontryagin ring $H_*(\Omega\Sigma X)$ is graded commutative.
So $H_+(X)$ is of dimension $1$ and is concentrated in even degree if the characteristic of ${\Bbbk}$ is different from $2$
(See~\cite[Example 2.6]{Kuribayashi:modulederivations} for a proof using Hochschild homology). In particular $\chi(\Sigma X)=0$ modulo the characteristic of ${\Bbbk}$.
\end{ex}
\begin{conjecture}\label{conjecturetncz}
Let $X$ be a simply-connected finite CW-complex and suppose that ${\Bbbk}$ is
a field.
If $H^*(i;{\Bbbk}):H^*(LX;{\Bbbk})\rightarrow
H^*(\Omega X;{\Bbbk})$ is onto then $\chi(X)$ is zero modulo the characteristic of ${\Bbbk}$ or $H_*(X)\cong {\Bbbk}$.
\end{conjecture}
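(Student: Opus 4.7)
The plan is to separate into the rational case, where Vigu\'e-Poirrier's Theorem~\ref{tncz sur Q} does essentially all the work, and the mod $p$ case, which is the genuinely open heart of the conjecture.

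Over $\Bbbk=\mathbb{Q}$, I would directly invoke Theorem~\ref{tncz sur Q}: the hypothesis that $H^*(i;\mathbb{Q})$ is surjective is equivalent to $H^*(X;\mathbb{Q})$ being a free graded commutative algebra. Since $X$ is a finite CW-complex, $H^*(X;\mathbb{Q})$ is finite-dimensional, which forbids any polynomial generator in even degree. Hence $H^*(X;\mathbb{Q})\cong \Lambda(x_{2n_1+1},\dots,x_{2n_k+1})$ is an exterior algebra on finitely many odd-degree generators. Its Euler characteristic is $0$ if $k\geq 1$ and $H_*(X;\mathbb{Q})\cong \mathbb{Q}$ if $k=0$. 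This settles the characteristic zero case.

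Over $\Bbbk=\mathbb{F}_p$ I would begin with Iwase's Theorem~\ref{TNCZ implique Pontryagin commutatif}. Since $X$ is a finite simply-connected CW-complex everything in sight is of finite type, so by the universal coefficient theorem surjectivity of $H^*(i;\mathbb{F}_p)$ translates into injectivity of $H_*(i;\mathbb{F}_p)$; Iwase's theorem then forces the Pontryagin algebra $H_*(\Omega X;\mathbb{F}_p)$ to be graded commutative. Combined with Milnor-Moore, $H_*(\Omega X;\mathbb{F}_p)$ is then a very restricted connected Hopf algebra (a tensor product of exterior, truncated polynomial and divided power algebras on generators of parity constrained by the characteristic). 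I would feed this structural information into the cohomological Serre spectral sequence of $\Omega X\hookrightarrow LX\twoheadrightarrow X$, mimicking the proof of Corollary~\ref{tncz implique euler nulle}: TNCZ collapses the spectral sequence at $E_2$, and Lemma~\ref{differentiel dans SSS fibration avec section} should produce a relation of the form $e\otimes f=0$ for a suitable top-dimensional class $e\in H^m(X;\mathbb{F}_p)$ playing the role of $\chi(X)\omega$.

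The main obstacle is precisely finding and interpreting this $e$. For a general finite CW-complex there is no tangent bundle, no Thom class, and no fundamental class, so the arguments of Part~1 do not apply directly. The natural avenue is to work within the F\'elix-Halperin-Thomas Gorenstein duality framework: any simply-connected finite CW-complex is Gorenstein of some formal dimension $m$, admits a homotopical fundamental class, and supports formal analogues of the diagonal shriek map and a loop coproduct. I would try to establish an analogue of Theorem~\ref{formule loop coproduct}~(3) in this generality, with $\chi(X)$ still appearing as the coefficient of the Gorenstein fundamental class; if this survives, then Corollary~\ref{tncz implique euler nulle} transports almost verbatim. The hard part will be verifying that the Gorenstein shriek map really does reproduce $\chi(X)$ times a fundamental class on $H^*(LX;\mathbb{F}_p)$ outside the Poincar\'e duality setting, which I expect will require a careful comparison with the Hochschild cohomology description of $H^*(LX;\mathbb{F}_p)$ and a mod $p$ analogue of the Jones isomorphism. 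Example~\ref{exemple des suspensions}, which already confirms the conjecture for suspensions using Bott-Samelson and Iwase, suggests that this rigidity is available in good cases.
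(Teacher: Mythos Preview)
The statement you are attempting to prove is labeled a \emph{conjecture} in the paper and is not proved there in full generality. The paper offers only partial evidence: the rational case via Vigu\'e-Poirrier (Theorem~\ref{tncz sur Q}), the suspension case (Example~\ref{exemple des suspensions}), and the manifold case (Corollary~\ref{tncz implique euler nulle}). For the general mod $p$ case the paper only remarks: ``We believe that conjecture~\ref{conjecturetncz} can be proved easily using Spanier-Whitehead duality.'' So there is no complete proof to compare your proposal against.

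Your rational argument is correct and is exactly what the paper indicates. Your mod $p$ strategy via the F\'elix--Halperin--Thomas Gorenstein framework is a reasonable line of attack and is in the same spirit as the paper's hint, since for a simply-connected finite CW-complex the Gorenstein property is essentially a cohomological shadow of Spanier--Whitehead duality: the Spanier--Whitehead dual $DX$ provides the ``fundamental class'' and the shriek maps that you are looking for. The paper's suggestion is perhaps more direct, since Spanier--Whitehead duality would let one mimic the Pontryagin--Thom construction of $\Delta^!$ at the level of spectra without passing through the algebraic Gorenstein machinery; your route through Hochschild cohomology and the Jones isomorphism is more circuitous and, as you yourself note, the identification of the coefficient as $\chi(X)$ is the real content and remains open in either approach.

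One caution: the detour through Iwase's theorem and Milnor--Moore, while correct, does not obviously feed into the vanishing argument you want. Knowing that $H_*(\Omega X;\mathbb{F}_p)$ is graded commutative constrains its Hopf-algebra structure, but the proof of Corollary~\ref{tncz implique euler nulle} uses only that $H^*(\Omega M)$ is torsion-free over $\Bbbk$ (automatic for a field) and that the spectral sequence collapses; the commutativity of the Pontryagin ring plays no role there. So that ingredient is likely superfluous to the strategy you outline.
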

It follows from the theorem of Vigue-Poirrier recalled above
(Theorem~\ref{tncz sur Q}) that the conjecture is true over the rationals.
In Example~\ref{exemple des suspensions}, we have checked the conjecture
for suspensions. In this paper, we proved the conjecture when
$X$ is a smooth connected, closed  $\Bbbk$-oriented manifold $M$
(Corollary~\ref{tncz implique euler nulle}).
We believe that conjecture~\ref{conjecturetncz} can be proved easily using Spanier-Whitehead duality.
\section{ the relative free loops case}
Let $g:N\rightarrow M$ be a map.
Let $g^*LM$ denote the {\em relative free loops space} of $g$ which is obtained by the following pull-back
$$\xymatrix{
g^*LM\ar[r]\ar[d]_p
&LM\ar[d]^{ev}\\
N\ar[r]_-{g}
&M.
}$$
\begin{theor}\label{formule relative free loop case}
Let $\sigma:N\hookrightarrow g^*LM$, $n\mapsto (n,\text{constant loop }g(n))$ be the section of the projection map
$p: g^*LM\rightarrow N$, $(n,w)\mapsto n$.
Let $M$ be a smooth  $h^*$-oriented manifold of dimension $m$.
Let $e_{TM}\in h^m(M)$ be the Euler class of the tangent bundle of $M$.
Suppose that $g$ is the composite $$N\buildrel{g_1}\over\twoheadrightarrow L\buildrel{g_2}\over\rightarrow M$$
where
$
\left\{
\begin{array}{l}
a)\; N  \mbox{  is a smooth manifold without boundary and }g_2\text{ is smooth and}\\
b)\; g_1\text{ is a (Serre) fibration}\,.
\end{array}\right.
$\\
Then for any $b\in h^*(g^*LM)$, 
$$
b\cup p^*\circ g^*(e_{TM})= p^*\circ \sigma^*(b)\cup p^*\circ g^*(e_{TM}).
$$
In particular if there is an integer $\text{dim }N$
such that $\forall i>\text{dim }N$, $h^i(N)=\{0\}$ then for all $b\in h^*(g^*LM)$ of degree $\vert b\vert>\text{dim }G-m$,
$b\cup p^*\circ g^*(e_{TM})=0$.
\end{theor}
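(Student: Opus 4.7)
The plan is to adapt the proof of Theorem~\ref{formule open string coproduct} part 4) to the relative setting by applying the machinery of Section~\ref{shriek d'un pull-back d'un embedding} to a suitable relative concatenation map. Define
$$\mu_g: g^*LM\times_N g^*LM\to g^*LM,\quad (n,w_1,w_2)\mapsto (n,w_1*w_2),$$
where the fibre product is over $N$ via $p$. My first step is to exhibit $\mu_g$ as the pullback of a smooth embedding along a Serre fibration. Consider the graph embedding $(id,g_2):L\hookrightarrow L\times M$: since $L$ and $M$ are smooth manifolds without boundary and $g_2$ is smooth, this is a proper closed embedding, and its normal bundle is canonically isomorphic to $g_2^*(TM)$, hence $h^*$-oriented with Euler class $g_2^*(e_{TM})$. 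I now verify that $\mu_g$ fits into a pullback square
$$\xymatrix{
g^*LM\times_N g^*LM\ar[r]^-{\mu_g}\ar[d] & g^*LM\ar[d]^{(g_1\circ p,\,ev_{1/2})}\\
L\ar[r]_-{(id,g_2)} & L\times M.
}$$
The right vertical map is the composite of the pullback $(p,ev_{1/2}):g^*LM\to N\times M$ of the Serre fibration $(ev_0,ev_{1/2}):LM\to M\times M$ with $g_1\times id_M$, hence is itself a Serre fibration. The pullback of $(id,g_2)$ along this fibration is $\{(n,w)\in g^*LM:w(1/2)=g(n)\}$, which is homeomorphic to $g^*LM\times_N g^*LM$ by splitting a loop at time $1/2$, and under this identification the pullback map to $g^*LM$ is exactly $\mu_g$.

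The construction of Section~\ref{shriek d'un pull-back d'un embedding} therefore produces a shriek map $\mu_g^!$, and Proposition~\ref{shriek et Euler class} combined with the naturality of the Euler class gives
$$\mu_g^*\circ\mu_g^!(x)=x\cup\pi^*(g^*(e_{TM})),$$
for any $x\in h^*(g^*LM\times_N g^*LM)$, where $\pi:g^*LM\times_N g^*LM\to N$ is the projection. Next, I observe that $\mu_g$ admits two natural homotopy sections
$$i_1(n,w)=(n,w,\widehat{g(n)}),\qquad i_2(n,w)=(n,\widehat{g(n)},w),$$
obtained by inserting a constant loop at $g(n)$ on the right or on the left, so that both $i_1^*$ and $i_2^*$ are retracts of $\mu_g^*$. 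Applying each retract to the displayed formula, exactly as in the proof of Theorem~\ref{formule open string coproduct} parts 1)--2), and using that $\pi\circ i_1=\pi\circ i_2=p$, yields
$$\mu_g^!(x)=i_1^*(x)\cup p^*g^*(e_{TM})=i_2^*(x)\cup p^*g^*(e_{TM}).$$

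The final step is to apply this equality to $x=\tilde{\Delta}^*(a\times b)$, where $\tilde{\Delta}:g^*LM\times_N g^*LM\hookrightarrow g^*LM\times g^*LM$ is the diagonal inclusion. Direct inspection shows that $\tilde{\Delta}\circ i_1=(id,\sigma\circ p)$ and $\tilde{\Delta}\circ i_2=(\sigma\circ p,id)$, hence $i_1^*\tilde{\Delta}^*(a\times b)=a\cup p^*\sigma^*(b)$ and $i_2^*\tilde{\Delta}^*(a\times b)=p^*\sigma^*(a)\cup b$. Equating the two expressions for $\mu_g^!\circ\tilde{\Delta}^*(a\otimes b)$ and specializing to $a=1$ gives the desired identity
$$b\cup p^*g^*(e_{TM})=p^*\sigma^*(b)\cup p^*g^*(e_{TM}).$$
The ``in particular'' assertion is then an immediate degree count: if $\vert b\vert>\dim N-m$ then $\sigma^*(b)\cup g^*(e_{TM})\in h^{\vert b\vert+m}(N)=\{0\}$, and applying $p^*$ finishes the proof.

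The main technical obstacle is the pullback identification in the first paragraph, namely checking that the composite evaluation $(g_1\circ p,ev_{1/2}):g^*LM\to L\times M$ is genuinely a Serre fibration and that splitting at the midpoint gives a natural homeomorphism between $\{(n,w):w(1/2)=g(n)\}$ and $g^*LM\times_N g^*LM$ that identifies the two maps to $g^*LM$; once this bookkeeping is done, the rest is a faithful adaptation of the proof of Theorem~\ref{formule open string coproduct}.
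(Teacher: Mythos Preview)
Your proof is correct and rests on the same core idea as the paper's: both exhibit the relevant concatenation as the pull-back of the graph embedding $(id,g_2):L\hookrightarrow L\times M$ along a Serre fibration, then run the ``two sections of $\mu$'' argument from the proof of Theorem~\ref{formule open string coproduct}. The organizational difference is that the paper does not rerun that argument: it first shows that $(1,g):N\to N\times M$ is a pull-back of an embedding in the sense of Definition~\ref{pull-back fibre ou transverse d'un embedding} (via the same two squares you use), then invokes Theorem~\ref{formule open string coproduct} part~4) for the self homotopy fibre product $\leftidx{^{1,g}}{(N\times M)}{^{1,g}}$, and finally transports the conclusion to $g^*LM$ through the explicit homotopy equivalence $\bar{\xi}$ of Lemma~\ref{homotopy coincidence cas particulier}, checking that $\bar{\xi}$ intertwines the projections $ev_0$, $p$ and the two sections $\sigma$. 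Your approach is more direct and self-contained (you never leave $g^*LM$), at the cost of redoing the $i_1$/$i_2$ computation; the paper's approach is more modular and shows that the relative case is literally an instance of the general open-string framework, at the cost of the extra identification lemma.
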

\begin{ex}\label{exemple de fibration evaluation}
An interesting example is when the fibration $g=g_1:\text{map}(S,M)\twoheadrightarrow M$ is the evaluation at the base point of  a well-pointed space $S$.
In this case, the pull-back $g^*LM$ of $LM$ along $g$ is the space
$\text{map}(S\vee S^1,M)$ of maps from the wedge of $S$ and the circle to $M$.
\end{ex}
\begin{ex}
A generalization of the preceding example is when $g_2:L\rightarrow M$ is any smooth map and the fibration $g_1:\text{map}(S,L)\twoheadrightarrow L$ is the evaluation at the base point of  a well-pointed space $S$.
In this case, the pull-back $g^*LM$ of $LM$ along $g$ is the space
$\text{map}((S\vee S^1,S),(M,L))$ of couples of maps $(\varphi,\psi)$ such that the square
$\xymatrix{
S\ar[r]^\psi\ar[d]
&L\ar[d]^{g_2}\\
S\vee S^1\ar[r]^\varphi
&M.
}
$
commutes.
\end{ex}
\begin{cor}
Let $M$ be a smooth  $h^*$-oriented manifold of dimension $m$.
Let $e_{TM}\in h^m(M)$ be the Euler class of the tangent bundle of $M$.
Denote by $\vee_n S^1$ the wedge of $n\geq 0$ circles.
Let $\sigma_n:M\hookrightarrow \text{map}(\vee_n S^1,M)$, $m\mapsto\text{constant map }m$ be the section of the evaluation map
$ev_n: \text{map}(\vee_n S^1,M)\twoheadrightarrow M$.
Then for any $b\in h^*(\text{map}(\vee_n S^1,M))$, 
$$
b\cup ev_n^*(e_{TM})= ev_n^*\circ \sigma_n^*(b)\cup ev_n^*(e_{TM}).
$$
\end{cor}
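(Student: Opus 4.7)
The argument is by induction on $n$, using Theorem~\ref{formule relative free loop case} at each step. The base case $n=0$ is tautological: with the convention that $\vee_0 S^1$ is the basepoint, $\text{map}(\vee_0 S^1,M)=M$ and both $ev_0$ and $\sigma_0$ are the identity. For $n=1$ the statement coincides with Theorem~\ref{formule relative free loop case} applied to $g=\mathrm{id}_M\colon M\to M$, viewed trivially as a composite of a smooth map with a Serre fibration and with $N=L=M$ a smooth manifold.

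For the inductive step I exploit the decomposition $\vee_n S^1=(\vee_{n-1}S^1)\vee S^1$, which yields a natural homeomorphism $\text{map}(\vee_n S^1,M)\cong g^*LM$, where $g:=ev_{n-1}\colon N:=\text{map}(\vee_{n-1}S^1,M)\twoheadrightarrow M$ is evaluation at the basepoint. Let $p\colon g^*LM\to N$ be the projection of the pullback and $\sigma\colon N\to g^*LM$ the section that adjoins the constant loop at $g(n)$. By Example~\ref{exemple de fibration evaluation}, Theorem~\ref{formule relative free loop case} applies to this $g$, and since $ev_n=g\circ p$ its conclusion reads
\[
b\cup ev_n^*(e_{TM})\;=\;p^*\sigma^*(b)\cup ev_n^*(e_{TM}).
\]

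To finish, apply the inductive hypothesis to the class $\sigma^*(b)\in h^*(N)$, obtaining
\[
\sigma^*(b)\cup ev_{n-1}^*(e_{TM})\;=\;ev_{n-1}^*\sigma_{n-1}^*\sigma^*(b)\cup ev_{n-1}^*(e_{TM}).
\]
Pulling this identity back along $p^*$ and using the two elementary identities $p^*\circ ev_{n-1}^*=ev_n^*$ and $\sigma_n=\sigma\circ\sigma_{n-1}$ (the constant map $\vee_n S^1\to\{m\}$ is $\sigma_{n-1}(m)$ adjoined to the constant loop at $m$), the right-hand side rewrites as $ev_n^*\sigma_n^*(b)\cup ev_n^*(e_{TM})$, which combined with the previous display closes the induction.

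The only delicate point is that for $n\geq 2$ the space $N=\text{map}(\vee_{n-1}S^1,M)$ is not a smooth manifold, so Theorem~\ref{formule relative free loop case} cannot be invoked in its literal form. One must appeal to the broader version described by Example~\ref{exemple de fibration evaluation}, covering the evaluation fibration from a mapping space out of any well-pointed source; since $\vee_{n-1}S^1$ is well-pointed, this applies. Once this extension is granted the calculation at each inductive step is a routine two-line manipulation.
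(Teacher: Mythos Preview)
Your proof is correct and follows the same route as the paper: induction on $n$, invoking Theorem~\ref{formule relative free loop case} through Example~\ref{exemple de fibration evaluation} with $S=\vee_{n-1}S^1$ and $g=ev_{n-1}$, then applying the inductive hypothesis to $a=\sigma^*(b)$ and using the identities $ev_n^*=p^*\circ ev_{n-1}^*$ and $\sigma_n^*=\sigma_{n-1}^*\circ\sigma^*$. Your ``delicate point'' is well spotted: the literal hypothesis~(a) of Theorem~\ref{formule relative free loop case} asks that $N$ be a smooth manifold, but its proof only uses that $L$ is a smooth manifold and $g_2$ is smooth, which is exactly what Example~\ref{exemple de fibration evaluation} supplies (there $L=M$ and $g_2=\mathrm{id}_M$); so this is a harmless misprint rather than a genuine extension of the theorem.
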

\begin{proof}
When $n=0$, the formula is true since $ev_0$ and $\sigma_0$ are just the identity map of $M$.
By induction on $n$, suppose that for any $a\in h^*(\text{map}(\vee_{n-1} S^1,M))$, 
$$
a\cup ev_{n-1}^*(e_{TM})= ev_{n-1}^*\circ \sigma_{n-1}^*(a)\cup ev_{n-1}^*(e_{TM}).
$$
By example~\ref{exemple de fibration evaluation} in the case $S=\vee_{n-1} S^1$
and $g=ev_{n-1}$,
for any $b\in h^*(\text{map}(S\vee S^1,M))$,
$$
b\cup p^*\circ ev_{n-1}^*(e_{TM})= p^*\left(\sigma^*(b)\cup  ev_{n-1}^*(e_{TM})\right).$$
By taking $a=\sigma^*(b)$, the latter is equal to
$$p^*\left(ev_{n-1}^*\circ \sigma_{n-1}^*\circ\sigma^*(b)\cup ev_{n-1}^*(e_{TM})\right)
$$
Since $p^*\circ ev_{n-1}^*=ev_n^*$ and $\sigma_{n-1}^*\circ\sigma^*=\sigma_{n}^*$,
the conclusion follows.
\end{proof}
\begin{rem}
Again, let $S$ be a well-pointed space.
Suppose that the fibration
$\text{map}_*(S\vee S^1,M)\buildrel{i}\over\hookrightarrow \text{map}(S\vee S^1,M)\buildrel{ev}\over\twoheadrightarrow M$ is Totally Non-Cohomologous to Zero, i. e. $H^*(i)$ is onto.
Let $\pi:S\vee S^1\twoheadrightarrow S^1$ be the canonical projection.
Then we have the commutative triangle
$$
\xymatrix{
LM\ar[r]^-{\text{map}(\pi,M)}\ar[dr]_{ev}
& \text{map}(S\vee S^1,M)\ar[d]^{ev}\\
&M
}
$$
Since the induced map between the fibers in cohomology,
$H^*(\text{map}_*(\pi,M)):H^*(\text{map}_*(S\vee S^1,M))\rightarrow
H^*(\Omega M)$ is surjective,
the free loop fibration
$\Omega M\buildrel{i}\over\hookrightarrow LM\buildrel{ev}\over\twoheadrightarrow M$ is also Totally Non-Cohomologous to Zero, i. e. $H^*(i)$ is onto.

Conclusion: the preceding corollary together with Lemma~\ref{tncz implique classe d'euler nulle cohomologie singuliere}  is not really interesting to see if the fibration
$(\Omega M)^{\times n}\buildrel{i}\over\hookrightarrow\text{map}(\vee_n S^1,M)\buildrel{ev_n}\over\twoheadrightarrow M$ is Totally Non-Cohomologous to Zero or not.
\end{rem}

Let $f:N\rightarrow M$ and $g:N\rightarrow M$ be two maps.
Let $N\times_f M^I\times_g N$ denote the {\em homotopy coincidence point space} of $f$ and $g$ which is obtained by the following pull-back
$$
\xymatrix{
N\times_f M^I\times_g N\ar[r]\ar[d]_p
&M^I\ar[d]^{(ev_0,ev_1)}\\
N\ar[r]_-{(f,g)}
&M\times M.
}
$$
\begin{lem}\label{homotopy coincidence cas particulier}
Let $\bar{\xi}:N\times_f M^I\times_g N\hookrightarrow \leftidx{^{1,f}}{(N\times M)}{^{1,g}}$
be the map from the homotopy coincidence point space of $f$ and $g$ to the homotopy fibre product
of $(1,f):N\rightarrow N\times M$ and $(1,g):N\rightarrow N\times M$
defined by
$$\bar{\xi}(n,\omega)=(n,\text{the path } t\mapsto (n,\omega(t)),n)$$
for any $n\in N$ and any path $\omega:I\rightarrow M$ such that $\omega(0)=f(n)$ and $\omega(1)=g(n)$. Then $\bar{\xi}$ is a homotopy equivalence
\end{lem}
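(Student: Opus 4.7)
The plan is to recognize $\bar{\xi}$ as the canonical comparison map from the pull-back of a (Hurewicz) fibration along a homotopy equivalence of bases to the total space of that fibration; the conclusion then follows from the standard fact (already invoked several times in this paper via \cite[Chap. 2 Theorem 14]{Spanier:livre}) that such a comparison map is a homotopy equivalence.

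First I would rewrite the homotopy fibre product in more convenient coordinates. A path $\alpha:I\to N\times M$ is the same data as a pair $(\beta,\omega)\in N^I\times M^I$, and the endpoint constraints $(1,f)(n)=\alpha(0)$, $(1,g)(n')=\alpha(1)$ reduce to $n=\beta(0)$, $n'=\beta(1)$, $\omega(0)=f(\beta(0))$, $\omega(1)=g(\beta(1))$. This yields a natural homeomorphism
$$
\leftidx{^{1,f}}{(N\times M)}{^{1,g}}\;\cong\;\bigl\{(\beta,\omega)\in N^I\times M^I:\;\omega(0)=f(\beta(0)),\;\omega(1)=g(\beta(1))\bigr\}.
$$
Under this identification, the projection $p_2:(\beta,\omega)\mapsto\beta$ is exhibited as the pull-back of the Hurewicz path-space fibration $(ev_0,ev_1):M^I\twoheadrightarrow M\times M$ along the map $(f\circ ev_0,\,g\circ ev_1):N^I\to M\times M$, so $p_2$ is itself a (Hurewicz) fibration.

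Next I would pull $p_2$ back along the constant-path inclusion $\eta:N\hookrightarrow N^I$, $n\mapsto(t\mapsto n)$, which is a homotopy equivalence with homotopy inverse $ev_0$. Forcing $\beta=\eta(n)$ collapses $n=n'$ and reduces the constraints on $\omega$ to $\omega(0)=f(n)$, $\omega(1)=g(n)$; the pull-back is therefore canonically identified with $N\times_f M^I\times_g N$, and the natural map from this pull-back into $\leftidx{^{1,f}}{(N\times M)}{^{1,g}}$ is precisely $\bar{\xi}$. Since pulling a Hurewicz fibration back along a (weak) homotopy equivalence of bases yields a (weak) homotopy equivalence of total spaces, we conclude that $\bar{\xi}$ is a homotopy equivalence.

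The argument is entirely formal and presents no substantive obstacle; the only point requiring attention is to verify that the chain of canonical identifications above matches the explicit formula $\bar{\xi}(n,\omega)=(n,\,t\mapsto(n,\omega(t)),\,n)$, which is immediate from the definitions since the constant path at $n$ in $N\times M$ paired with the path $\omega$ in $M$ corresponds under the reindexing $(\beta,\omega)\leftrightarrow\alpha$ to $\alpha(t)=(n,\omega(t))$.
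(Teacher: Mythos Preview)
Your proof is correct. Both your argument and the paper's rest on the same underlying principle---that pulling back a Hurewicz fibration along a homotopy equivalence of bases induces a homotopy equivalence of total spaces---but you organise the diagram differently, and in a way that is arguably cleaner.

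The paper works over the base $N\times M$: it introduces an auxiliary homotopy equivalence $\xi:M^I\times_g N\hookrightarrow(N\times M)^I\times_{1,g}N$ between the total spaces of two fibrations over $N\times M$, then pulls both back along $(1,f):N\to N\times M$ to recover $\bar{\xi}$, and finally invokes the model-category structure on spaces (factoring $(1,f)$ as a homotopy equivalence followed by a fibration) to conclude. Your approach instead reindexes via $(N\times M)^I\cong N^I\times M^I$ and works over the base $N^I$: you exhibit $\bar{\xi}$ directly as the comparison map obtained by pulling back a single fibration $p_2$ along the constant-path inclusion $\eta:N\hookrightarrow N^I$, so the conclusion follows in one step from the cited fact, without the auxiliary map $\xi$ or the model-category machinery. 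Your route is shorter and more elementary; the paper's route has the minor advantage of keeping the intermediate space $M^I\times_g N$ (the total space of the fibration $p_g$ associated to $g$) explicitly in view, which connects more visibly with constructions used elsewhere in the paper.
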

\begin{proof}
Consider the three pull-backs squares
$$
\xymatrix{
\leftidx{^{1,f}}{(N\times M)}{^{1,g}}\ar[r]\ar[d]_{(ev_0,ev_1)}
& (N\times M)^I\times_{1,g} N\ar[r]\ar[d]_{(ev_0,ev_1)}
&(N\times M)^I\ar[d]^{(ev_0,ev_1)}\\
N\times N\ar[r]_-{(1,f)\times 1}\ar[d]_{p_1}
&(N\times M)\times N\ar[r]_-{1\times (1,g)}\ar[d]^{p_1}
& (N\times M)\times (N\times M)\\
N\ar[r]_-{(1,f)}
&N\times M
}
$$
Here $p_1$ are the projections on the first factor.
Consider also the two pull-back squares
$$
\xymatrix{
N\times_f M^I\times_g N\ar[r]\ar[d]_{p}
& M^I\times_g N\ar[r]\ar[d]_{(p_N,ev_0)}
&M^I\ar[d]_{(ev_1,ev_0)}\\
N\ar[r]_-{(1,f)}
&N\times M\ar[r]_-{g\times 1}
&M\times M
}
$$
Let $\xi:M^I\times_g N\hookrightarrow (N\times M)^I\times_{1,g} N$
be the map 
defined by
$${\xi}(n,\omega)=(\text{the path } t\mapsto (n,\omega(t)),n)$$
for any $n\in N$ and any path $\omega:I\rightarrow M$ such that $\omega(1)=g(n)$.
Obviously ${\xi}$ is a homotopy equivalence.
We obtain the following commutative diagram where the two squares are pull-backs
according to the two previous diagrams.
$$
\xymatrix{
N\times_f M^I\times_g N\ar[r]\ar@/_4pc/[dd]_{p}\ar[d]^{\bar{\xi}}
& M^I\times_g N\ar@/^4pc/[dd]^{(p_N,ev_0)}\ar[d]^\xi_\approx\\
\leftidx{^{1,f}}{(N\times M)}{^{1,g}}\ar[r]\ar[d]_{ev_0}
& (N\times M)^I\times_{1,g} N\ar[d]_{ev_0}\\
N\ar[r]_-{(1,f)}
&N\times M
}
$$
By decomposing $(1,f)$ into the composite of a homotopy equivalence and of a fibration, we show using the structure of model category on topological spaces, that $\bar{\xi}$ is a homotopy equivalence since both $ev_0$ and $(p_N,ev_0)=ev_0\circ \xi$ are fibrations.
\end{proof}
\begin{proof}[Proof of Theorem~\ref{formule relative free loop case}]
Consider the two pull-back squares
$$
\xymatrix{
 N\ar[r]^-{(1,g)}\ar[d]_{g_1}
&N\times M\ar[d]^{g_1\times 1}\\
 L\ar[r]^-{(1,g_2)}\ar[d]_{g_2}
&L\times M\ar[d]^{g_2\times 1}\\
 M\ar[r]_-\Delta
&M\times M.
}
$$
Since $g_2$ and $1:M\rightarrow M$ are transverse, $g_2\times 1$
is transverse to the diagonal embedding $\Delta$. And so by Remark~\ref{pull-back transverse d'un embedding},
$(1,g_2):L\rightarrow L\times M$ is a proper embedding of codimension $m$ with $h^*$-oriented normal bundle.

Since $g_1$ is a (Serre) fibration, $g_1\times 1$ is also a (Serre) fibration.
Therefore $(1,g):N\rightarrow N\times M$ is the pull-back of an embedding
in the sense of definition~\ref{pull-back fibre ou transverse d'un embedding}.
So we can apply part 4) of Theorem~\ref{formule open string coproduct}.
Since the homotopy equivalence
$\bar{\xi}:N\times_f M^I\times_g N\buildrel{\approx}\over\hookrightarrow \leftidx{^{1,f}}{(N\times M)}{^{1,g}}$
of Lemma~\ref{homotopy coincidence cas particulier} in the case $f=g$
 commutes with the projection maps, i. e. $ev_0\circ \bar{\xi}=p$
 and also with the two sections $\sigma$,
 the ideal $\text{Ker } \sigma^*:h^*(g^*LM)\twoheadrightarrow h^*(N)$ satisfies
$$
\text{Ker } \sigma^*\cup p^*\circ g^*(e_{TM})=\{0\}.
$$
\end{proof}

\bibliography{Bibliographie}
\bibliographystyle{amsplain}
\end{document}